\newcommand{\be}{\begin{equation}}
\newcommand{\ee}{\end{equation}}
\newcommand{\beq}{\begin{eqnarray}}
\newcommand{\eeq}{\end{eqnarray}}
\newtheorem{thm}{Theorem}[section]
\newtheorem{lma}{Lemma}[section]
\newtheorem{cor}{Corollary}[section]
\newtheorem{defn}{Definition}[section]
\theoremstyle{remark}
\newtheorem{rem}{Remark}[section]
\numberwithin{equation}{section}
\newtheorem{claim}{Claim}[section]
\def\be{\begin{equation}}
\def\ee{\end{equation}}
\def\bee{\begin{equation*}}
\def\eee{\end{equation*}}
\def\ol{\overline}
\def\lf{\left}
\def\ri{\right}
\def\K{K\"ahler }
\def\Ric{\text{\rm Ric}}
\def\wh{\widehat}
\def\wt{\widetilde}
\def\p{\partial}
\def\ol{\overline}
\def\heat{\lf(\frac{\p}{\p t}-\Delta_{\eta}\ri)}
\def\tr{\operatorname{tr}}
\def\e{\varepsilon}
\def\a{{\alpha}}
\def\b{{\beta}}
\def\R{\mathbb{R}}
\begin{document}

\title[]
{Rigidity of area non-increasing maps}

 \author{Man-Chun Lee$^1$}
\address[Man-Chun Lee]{Department of Mathematics, The Chinese University of Hong Kong, Shatin, Hong Kong, China}
\email{mclee@math.cuhk.edu.hk}

\author{Luen-Fai Tam}
\address[Luen-Fai Tam]{The Institute of Mathematical Sciences and Department of Mathematics, The Chinese University of Hong Kong, Shatin, Hong Kong, China.}
 \email{lftam@math.cuhk.edu.hk}

\author{Jingbo Wan}
\address[Jingbo Wan]{Department of Mathematics, Columbia University, New York, NY, 10027}
 \email{jingbowan@math.columbia.edu}

 \thanks{$^1$Research partially supported by Hong Kong RGC grant (Early Career Scheme) of Hong Kong No. 24304222, 14300623, a direct grant of CUHK and NSFC grant No. 12222122.}

\renewcommand{\subjclassname}{
  \textup{2020} Mathematics Subject Classification}
\subjclass[2020]{Primary 51F30, 53C24}

\date{\today}

\begin{abstract}
In this work, we consider the area non-increasing map between manifolds with positive curvature. By exploring the strong maximum principle along the graphical mean curvature flow, we show that an area non-increasing map between certain positively curved manifolds is either homotopy trivial, Riemannian submersion, local isometry or isometric immersion. This implies that an area non-increasing self map of $\mathbb{CP}^n$, $n\ge 2$ is either homotopically trivial or is an isometry. This confirms a  speculation of Tsai-Tsui-Wang. We also use Brendle's sphere Theorem and mean curvature flow coupled with Ricci flow to establish related results on manifolds with positive $1$-isotropic curvature.
\end{abstract}

\maketitle

\markboth{Man-Chun Lee, Luen-Fai Tam, Jingbo Wan}{Rigidity of area non-increasing maps}

\section{Introduction}\label{s-introduction}

People have been interested to study properties of  maps between compact Riemannian manifolds in terms of the so-called $k$-dilation. The $1$-dilation maps (with constant 1), which are just distance non-increasing maps, have been studied intensively, see \cite{DeturckGluckStorm,Gromov1978,Hsu1972,Lawson1986} for example. The $2$-dilation maps (with constant 1)  are maps which are (two dimensional)  area non-increasing. In case the constant is less than 1, then it will be called (strictly) area decreasing map in this work. Interesting and important results have also been obtained. In particular using graphical mean curvature flow, Tsui-Wang \cite{TsuiWang2004} confirmed a conjecture of Gromov \cite{Gromov1996} that any rea {\it decreasing} map  from the standard sphere $\mathbb{S}^m$ into $\mathbb{S}^n$   must be homotopically trivial.
 Later Lee-Lee \cite{LeeLee2011}  proved that  any  area decreasing map between two compact manifolds satisfying certain curvature conditions  is also homotopically trivial,   see also \cite{AssimosSavasSmoczyk2023,SavasSmoczyk2014,SavasSmoczyk2018,LeeWan2023}  for more related results.  Recently, more general results are obtained by Tsai-Tsui-Wang \cite{TsaiTsuiWang2023} in this direction. It is well-known that in contrast,    by the work of Guth \cite{Guth2013}  one cannot expect an analogous result for $k\geq 3$. Motivated by the work of Tsai-Tsui-Wang \cite{TsaiTsuiWang2023}, in the first part of this work we want to study area non-increasing map under similar curvature conditions in their work. For area non-decreasing map, there is a well-known rigidity result by Llarull \cite{Llarull1998}  which says that any area non-increasing map from a {\it spin} $n$-manifold $(N^n,g)$ with scalar curvature $\mathcal{R}(g)\geq n(n-1)$  to the standard $n$-sphere with nonzero degree must be an isometry. Since the results of Tsai-Tsui-Wang \cite{TsaiTsuiWang2023} do not assume that the manifolds are spin,  one may wonder if there are similar results for area non-increasing map in the setting of \cite{TsaiTsuiWang2023}. Motivated by this, in the first part of this work, we prove the following:

\begin{thm}\label{t-intro-1}
 Let $(M^m, g), (N^n,h)$ be two compact manifolds with $m, n\ge 3$. Suppose $f_0$ is a smooth map from $M$ to $N$ which is area non-increasing. Suppose one of the following curvature conditions is satisfied:
 \begin{enumerate}
   \item [(i)]
\bee
 \Ric^g_{\min}-\Ric_{\max}^h+(m-\ell)\cdot \kappa_M+(n-\ell)\cdot \kappa_N\ge0, \;\;{\text and\ }\\[3mm]
 \kappa_M+\kappa_N> 0;
\eee
or
\item[(ii)]

\bee
\kappa_M> 0\ \text{and}\ (\ell-1)\tau_N\le  \lf(2(m-\ell)+\ell-1\ri)\kappa_M.
\eee
 with $\ell=\min\{m, n\}$.
 \end{enumerate}
 Here $\kappa_M, \kappa_N$ are the lower bounds of the sectional curvature of $M, N$ respectively, $\tau_N$ is the upper bound of the sectional curvature of $N$, $\Ric^g_{\min}$ is the minimum of the eigenvalues of $\Ric^g$ in $M$, and $\Ric^h_{\max}$ is the maximum of the eigenvalues of $\Ric^h$ in $N$.

Then either $f_0$ is homotopically trivial or $f_0$ is a Riemannian submersion (if $m>n$), local isometry (if $m=n$) or isometric immersion (if $m<n$).
\end{thm}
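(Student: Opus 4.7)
The plan is to flow the graph of $f_0$ by mean curvature flow (MCF) in the product $(M\times N, g\oplus h)$, following the strategy of Tsui--Wang and Tsai--Tsui--Wang \cite{TsuiWang2004, TsaiTsuiWang2023}, and then extract the dichotomy via Hamilton's strong maximum principle. Let $\Sigma_0=\{(x,f_0(x)):x\in M\}$ and let $\Sigma_t=\operatorname{graph}(f_t)$ denote its MCF evolution. Define on $\Sigma_t$ the 2-dilation
\[
\rho_t(x) = \max_{v_1\wedge v_2\in\Lambda^2 T_xM}\frac{|df_t(v_1)\wedge df_t(v_2)|_h}{|v_1\wedge v_2|_g},
\]
so that $f_t$ is area non-increasing iff $1-\rho_t\ge 0$. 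Adapting the Tsai--Tsui--Wang derivation to the non-strict setting, one obtains a parabolic differential inequality
\[
\Bigl(\tfrac{\partial}{\partial t}-\Delta_{\Sigma_t}\Bigr)(1-\rho_t) \ge -C\,|\nabla(1-\rho_t)| + \mathcal R_t,
\]
where the reaction $\mathcal R_t$ is an algebraic expression in the singular values of $df_t$ and in the sectional/Ricci curvatures of $g$ and $h$. Under either hypothesis (i) or (ii), a linear-algebraic estimate---essentially the non-strict analog of the one in \cite{TsaiTsuiWang2023}---shows $\mathcal R_t\ge 0$ at points where $1-\rho_t=0$. This is the main technical input. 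Since $\rho_t$ is merely a Lipschitz maximum, the argument is best carried out on a smooth tensor (in the spirit of Savas--Smoczyk) that has $1-\rho_t$ as a spectral quantity.

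By the weak maximum principle the condition $1-\rho_t\ge 0$ is preserved, hence $\Sigma_t$ remains graphical and the flow exists for all $t\ge 0$. Applying Hamilton's strong maximum principle to (the tensor form of) $1-\rho_t$ then gives the dichotomy: either (a) $1-\rho_t>0$ strictly on all of $\Sigma_t$ for every $t>0$, so $f_t$ is strictly area decreasing for $t>0$, or (b) $1-\rho_t\equiv 0$ on $\Sigma_t$ for every $t\ge 0$.

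In case (a), fix any $t_0>0$; then $f_{t_0}$ is strictly area decreasing, and the area-decreasing convergence theorem of Tsai--Tsui--Wang \cite{TsaiTsuiWang2023} applies to $f_{t_0}$: the MCF starting from $\Sigma_{t_0}$ converges smoothly to a constant map. Hence $f_0$, being homotopic to $f_{t_0}$ through the smooth deformation supplied by the MCF itself, is homotopically trivial.

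In case (b) we have $\rho_0\equiv 1$, i.e., the two largest singular values $\lambda_{\ell-1}\le \lambda_\ell$ of $df_0$ satisfy $\lambda_{\ell-1}\lambda_\ell=1$ pointwise. The vanishing of the reaction $\mathcal R_0\equiv 0$---combined with the strict positivity imposed by the hypotheses ($\kappa_M+\kappa_N>0$ in (i), $\kappa_M>0$ in (ii))---must then pin down the remaining singular values, forcing all nonzero singular values of $df_0$ to equal $1$ at every point. This yields exactly the stated trichotomy: a Riemannian submersion when $m>n$, a local isometry when $m=n$, and an isometric immersion when $m<n$. The chief obstacles are therefore (1) establishing $\mathcal R_t\ge 0$ at $\rho_t=1$ under the non-strict hypotheses (i) or (ii), and (2) classifying the equality case $\mathcal R_0=0$ to force every nonzero singular value of $df_0$ to be exactly $1$, both of which are careful spectral/algebraic refinements of the strict analogs already in \cite{TsaiTsuiWang2023}.
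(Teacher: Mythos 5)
Your outline follows the same strategy as the paper (graphical MCF in $(M\times N, g\oplus h)$, a tensor encoding area non-increasing, a maximum-principle dichotomy, strict case handled by the known area-decreasing results, null case giving rigidity of the singular values), but as written it has genuine gaps, and the two points you defer as "chief obstacles" are in fact the entire technical content of the paper's proof. First, the step "$1-\rho_t\ge 0$ is preserved, hence $\Sigma_t$ remains graphical and the flow exists for all $t\ge 0$" is unjustified: area non-increasing only gives $\lambda_i\lambda_j\le 1$ for $i\ne j$, so the top singular value $\lambda_1$ is not controlled and graphicality can a priori degenerate; the paper's long-time criterion (Theorem 2.2) requires a strict bound $\Theta\ge\delta>0$, which is only obtained after upgrading to strict positivity in part (ii) of Theorem 3.3. (This error is not load-bearing for your dichotomy, since the strict branch can quote Lee--Lee/Tsai--Tsui--Wang and the null branch only needs small times, but it cannot stand as stated.) Second, "Hamilton's strong maximum principle" does not apply off the shelf here: the evolution of $\Theta$ is not a reaction--diffusion system in $\Theta$ alone, since the reaction contains second-fundamental-form terms such as $4S_{11}\sum_k |A^{1+m}_{1k}|^2$ which are sign-indefinite precisely at null directions. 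Showing that the reaction has the right one-sided structure (both for weak preservation and for the strong dichotomy) is the heart of the matter; the paper does it via the identity $2\Theta_{1221}S_{11}=\Theta_{1221}^2+C_{22}^2-C_{11}^2$, absorption of the bad terms into $\tfrac12|\nabla\Theta_{1221}|^2$ using the first-order condition at the minimum, and an explicit perturbation $\Theta-\tfrac12\bigl(\delta e^{-Lt}\varphi^2-\epsilon e^{L(t-T)}\bigr)\eta\owedge\eta$ with $\varphi$ a solution of the heat equation, in place of a black-box tensor SMP. Asserting that "$\mathcal R_t\ge 0$ at $\rho_t=1$ follows from a non-strict analog of TTW" assumes the main lemma rather than proving it.

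Third, the equality case is likewise asserted rather than argued: the mechanism in the paper is that at a null point one has $\nabla\Theta_{1221}=0$, which forces $S_{11}\sum_p(A^{1+m}_{1p})^2+S_{22}\sum_p(A^{2+m}_{2p})^2=0$, so the inequality collapses to $0\ge(\kappa_M+\kappa_N)\,c\sum_{p\ge 3}S_{pp}$ with $c>0$ and $S_{pp}\ge 0$, giving $\lambda_p=1$ for $p\ge 3$ and then, from $\lambda_1\lambda_2=1$ and the ordering, $\lambda_1=\lambda_2=1$. Note also that the conclusion you need is that \emph{all} $\lambda_i=1$ for $1\le i\le\ell$ (no vanishing singular values when $m\le n$), not merely that "all nonzero singular values equal $1$"; and the analysis must be carried out at times $t>0$ and passed to $t\to 0$, as the paper does. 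Finally, be aware that the paper's dichotomy is organized slightly differently from a single SMP statement: part (ii) shows that strict area decrease at one point at one time propagates to $\Theta>0$ everywhere later (and then $\mathfrak m(t)\to 2$ gives homotopic triviality), while part (iii) treats the complementary case where the lowest eigenvalue vanishes everywhere by the pointwise second-derivative analysis above. Filling in these items is exactly what a complete proof requires.
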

See Theorem \ref{t-TTW-2} for more details.  Among  other things, as a corollary, one can conclude that any area non-increasing self map from $\mathbb{CP}^n$, $n\ge 2$ with standard Fubini-Study metric is either an isometry or is homotopically trivial. This confirms a conjecture (`speculating' in their words) of Tsai-Tsui-Wang \cite{TsaiTsuiWang2023}. In particular, any area non-increasing self map from $\mathbb{CP}^n$ with non-zero degree must be an isometry.  On the other hand any area non-increasing map from $\mathbb{S}^m$ to $\mathbb{S}^n$ with $n>m\ge 3$ and with standard metrics must be either homotopically trivial or a Riemannian immersion or both. We should point out that the Hopf fibration $\mathbb{S}^{2n+1}\to \mathbb{CP}^n$ is area nonincreasing. But the map does not satisfy either condition in the above theorem. In this sense, the theorem is not sharp. We should also remark that the conditions (i), (ii) above are not totally unrelated. In fact, one can check that $(\ell-1)\tau_N\le  \lf(2(m-\ell)+\ell-1\ri)\kappa_M$ will imply $\Ric^g_{\min}-\Ric_{\max}^h+(m-\ell)\cdot \kappa_M+(n-\ell)\cdot \kappa_h\ge0$. Moreover, condition (i) will imply that $\Ric^g\ge 0$.

The second main theme of this work is to generalize the work of Tsui-Wang \cite{TsuiWang2004} on spheres in a different direction.   Let us first recall the definition of positive $1$-isotropic curvature.

\begin{defn}\label{defn:PIC1}
We say that a curvature type tensor $R$ is in the cone of positive $1$-isotropic curvature, i.e. $R\in  \mathrm{C}_{PIC1}$, if for all orthonormal frame $\{e_i\}_{i=1}^4$ and $\mu\in [0,1]$, we have
$$R_{1331}+\mu^2 R_{1441}+R_{2332}+\mu^2 R_{2442}-2\mu R_{1234}> 0.$$
For a Riemannian manifold $(M,g)$, we define
\begin{equation}
\chi_{IC1}(g)=\sup\left\{ s\in \mathbb{R}: \mathrm{Rm}(g)-s\cdot \frac12 g\owedge g\in \mathrm{C}_{PIC1}\right\}
\end{equation}
where $\owedge$ denotes the Kulkarni-Nomizu product, see \eqref{e-KNproduct}. We say that $(M,g)$ has non-negative $1$-isotropic curvature if $\chi_{IC1}(g)\geq 0$.
\end{defn}

 One can equivalently interpret the non-negative $1$-isotropic curvature using the language of Lie algebra by the work of Wilking \cite{Wilking2013}.
In dimension three, the latter definition is still well-defined and is equivalent to $\Ric\geq 0$. Therefore, $\chi_{IC1}(g)\geq 0$ will be understood to be $\Ric\geq 0$ when $n=3$.

In \cite{Brendle2008}, Brendle showed that compact manifolds with positive $1$-isotropic curvature are diffeomorphic to quotient of sphere using the Ricci flow, which is a generalization of Brendle-Schoen's differentiable sphere theorem in \cite{BrendleSchoen2009}.
 Motivated by Brendle's Sphere Theorem,  we are interested in the homotopy problem on manifolds with positive $1$-isotropic curvature.
By considering the graphical mean curvature flow coupled with the Ricci flows,  we prove the following theorem:
\begin{thm}\label{t-intro-2}
 Let $(M^m, g)$ be a compact, locally irreducible and locally non-symmetric manifold with $\chi_{IC1}(g)\geq 0$ and $(N^n,h)$ be a compact manifold such that $m,n\geq 3$. Suppose $f_0$ is a smooth map from $M$ to $N$ which is area non-increasing. Suppose
 \begin{equation}
 \mathcal{R}_{min}(g)\geq \frac{m}{n}\mathcal{R}_{max}(h)
 \end{equation}
 and one the following curvature conditions is satisfied
 \begin{enumerate}
   \item [(i)] $(N,h)$ is Einstein with $\kappa_N\geq 0$;
or
\item[(ii)]
$
\tau_N\leq  0$.
 \end{enumerate}
 Here $\mathcal{R}_{min}(g)$ denotes the minimum of scalar curvature of $g$ and $\mathcal{R}_{max}(h)$ denotes the maximum of scalar curvature of $h$. Then either $f_0$ is homotopically trivial or $f_0$   local isometry (if $m=n$) or isometric immersion (if $m<n$).
\end{thm}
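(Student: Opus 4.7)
The plan is to adapt the graphical mean curvature flow strategy of Tsui-Wang \cite{TsuiWang2004} and Tsai-Tsui-Wang \cite{TsaiTsuiWang2023}, coupling it with a Ricci flow on $M$ (and, in case (i), a simultaneous Einstein rescaling on $N$) so that the hypothesis $\chi_{IC1}(g)\geq 0$ can be upgraded to strict positivity. Concretely, we solve the Ricci flow $g_t$ starting from $g$ on a short interval $[0,T_0]$ and, letting $h_t$ denote $h$ itself or its Einstein rescaling according to the case, evolve the graph $\Gamma_t\subset (M\times N,\, g_t\oplus h_t)$ of a family of maps $f_t\colon M\to N$ by mean curvature flow in this time-dependent ambient metric.

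The first step is to analyze the Ricci flow on $M$. By Brendle's invariance result \cite{Brendle2008} together with Wilking's Lie-algebraic description of the cone $\mathrm{C}_{PIC1}$, the condition $\chi_{IC1}(g_t)\geq 0$ is preserved. Applying Hamilton's strong maximum principle for systems and using that $(M,g)$ is locally irreducible and locally non-symmetric, I upgrade this to $\chi_{IC1}(g_t)>0$ for every $t>0$; in particular $g_t$ acquires strictly positive sectional curvature and Ricci curvature immediately. Because $\mathcal{R}_{\min}(g_t)$ is non-decreasing under Ricci flow and $h_t$ (either constant or a controlled rescaling) stays close to $h$ on $[0,T_0]$, the scalar-curvature comparison $\mathcal{R}_{\min}(g_t)\geq \tfrac{m}{n}\mathcal{R}_{\max}(h_t)$ persists on that interval in both cases (i) and (ii), and the corresponding curvature hypothesis on $N$ (either $\kappa_N\geq 0$ with Einstein, or $\tau_N\leq 0$) remains intact.

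Next I would run the graphical MCF from $\Gamma(f_0)$ and track the two-Jacobian, equivalently the pairwise products $\lambda_i\lambda_j$ of the singular values of $df_t$, whose bound by $1$ is the area non-increasing condition. Differentiating along the coupled MCF/Ricci flow, in the spirit of the computations in Tsai-Tsui-Wang \cite{TsaiTsuiWang2023} and Lee-Wan \cite{LeeWan2023}, the reaction term of the resulting parabolic inequality splits into a contribution from $\Rm(g_t)$ and one from $\Rm(h_t)$. The strictly positive $\chi_{IC1}(g_t)$ supplies the favorable term on the $M$-side, $\kappa_N\geq 0$ or $\tau_N\leq 0$ controls the $N$-side, and the scalar-curvature comparison is precisely what closes the sum over pairs of singular values. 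This ensures that $\lambda_i\lambda_j\leq 1$ is preserved and that the graph condition persists on the maximal interval of existence of the coupled flow.

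The final step is a strong maximum principle dichotomy applied to $\sup_{i<j}\lambda_i\lambda_j$ on $\Gamma_t$. If this supremum remains strictly below $1$ on a positive-time strip, a Tsui-Wang type long-time existence and convergence argument forces $f_t$ to converge to a constant, so $f_0$ is homotopically trivial. Otherwise the supremum attains $1$ at some positive time, and the strong maximum principle, combined with the strict positivity of $\chi_{IC1}(g_t)$ secured in Step 1, propagates equality throughout $M$; this forces $df$ to be isometric on the relevant subspace, which gives a local isometry when $m=n$ or an isometric immersion when $m<n$. The principal obstacle is the sign analysis of the reaction term: without the Ricci-flow upgrade the bare hypothesis $\chi_{IC1}(g)\geq 0$ would be too weak to dominate the $h$-contribution, so Brendle's improvement of $\chi_{IC1}$ under Ricci flow is not a cosmetic convenience but the essential ingredient, and the scalar-curvature comparison is what ties the pairwise singular-value estimates together globally.
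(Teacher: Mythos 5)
Your proposal captures the broad idea of coupling a Ricci flow on $M$ with the graphical mean curvature flow and using Brendle's PIC1 preservation, but there are two concrete gaps that separate it from a proof.

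First, the claimed upgrade to $\chi_{IC1}(g_t)>0$ for all $t>0$ is not correct in general under the hypotheses. The Brendle--Schoen strong maximum principle, combined with local irreducibility and local non-symmetry, does not by itself force the PIC1 condition to become strict; what it gives you is a parallel degenerate $4$-frame structure, and one must then run a Berger holonomy analysis. The paper's Lemma~\ref{l-long-time-RF} does exactly this, and in the $\mathrm{U}(n/2)$ holonomy branch the metric is K\"ahler with positive orthogonal bisectional curvature but $\chi_{IC1}$ can remain identically zero along the whole flow. So the strict inequality you want to feed into the reaction term is simply unavailable in that case. (Even when $\chi_{IC1}(g_t)>0$ does hold, it does not imply positive sectional curvature; at $\mu=0$ it only gives $K_{ik}+K_{il}>0$ for pairs of planes sharing an edge.) What Lemma~\ref{l-long-time-RF} actually delivers --- and what the rest of the argument is built on --- is not strictness of $\chi_{IC1}$ but finite-time curvature blowup of $g(t)$; the preservation of the area non-increasing condition in Theorem~\ref{t-pic} is run under conditions (\textbf{C}) or (\textbf{D}), which only use $\Ric_3(g(t))\geq 0$, not strict PIC1.

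Second, your proposal does not make the Einstein vs.\ non-Einstein dichotomy for $g_0$, which is the structural backbone of Theorem~\ref{t-pic-rigidity}. In the non-Einstein case the Ricci flow on $M$ blows up at $T_{\max}<L^{-1}$ while $h(t)$ (either fixed or an Einstein rescaling) stays bounded there; the scalar curvature comparison becomes strict for $t>0$ by the strong maximum principle, and the resulting curvature imbalance means that for $t$ close to $T_{\max}$ the pair $(g(t),h(t))$ satisfies the static hypotheses of Theorem~\ref{t-TTW-2}, whence $f_t$ (and so $f_0$) is homotopically trivial. In the Einstein case the Ricci flow is just uniform shrinking, so no such imbalance is created and your convergence argument has nothing to grab onto; the paper instead uses Lemma~\ref{l-long-time-RF} to conclude $g_0$ has positive sectional curvature and then invokes the static Theorem~\ref{t-TTW-2} directly, obtaining the isometry/rigidity alternative. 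Your final ``supremum attains $1$ $\Rightarrow$ propagates equality'' strong maximum principle step is in the right spirit, but in the paper it lives in the static theorem (Theorem~\ref{t-TTW-2}(iii)), and reaching that theorem requires first disposing of the non-Einstein case via blowup; trying to run the rigidity inside the evolving-background flow would not close without the strict $\Ric_3$ hypotheses of Theorem~\ref{t-pic}(iii), which again you cannot guarantee from $\chi_{IC1}\geq 0$ alone.
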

See Theorem~\ref{t-pic-rigidity} for more details. If $g$ is assumed to have $\chi_{IC1}(g)>0$, i.e. positive $1$-isotropic curvature, then $g$ is locally irreducible. Then $g$ must be either locally non-symmetric or a quotient of sphere by Brendle's Sphere Theorem \cite{Brendle2008}. In either case, we might apply Theorem~\ref{t-intro-1} or \ref{t-intro-2} to discuss the area non-increasing maps $f$ from $M$ to $N$.

\medskip
 The paper is organized as follows. In Section 2, we discuss the preliminary on geometry of graphical mean curvature flow, its short-time and long-time existence. In Section 3, we will establish monotonicity along the graphical mean curvature under various curvature conditions and will use it to prove the rigidity of area non-increasing maps. Section 3 is further divided into three parts: evolution equations, monotonicity in static backgrounds, and monotonicity in evolving backgrounds. The main strong maximum principle type argument is presented in the proof of Theorem \ref{t-TTW-2} within a static background, emphsizing curvature condition (i) from Theorem \ref{t-intro-1}. This approach extends to various cases in both static and evolving backgrounds. To prove Theorem \ref{t-intro-2}, we incorporate evolving backgrounds via Ricci Flow and apply the strong maximum principle alongside Brendle's Sphere Theorem (see Section 3.3 for details).

\vskip0.2cm

{\it Acknowledgement}: The authors would like to thank Prof. Chung-Jun Tsai, Prof. Mao-Pei Tsui and Prof. Mu-Tao Wang for valuable discussions and answering some of the questions. The first named author would like to thank Prof. Jason Lotay for some insightful discussion on coupled flows. The third named author would like to thank Prof. Mu-Tao Wang for his continuing support, and also for introducing this problem.

\section{Preliminaries: graphical mean curvature flow, short time and long time existence}\label{s-existence}\label{sec:graph-geom}

Let $(M^m,g(t))$, $(N^n,h(t))$ be two compact manifolds with smooth families of metrics $g(t), h(t)$ which may be independent of time. In this section, we discuss the mean curvature flow from $M$ to   $X^{m+n}=M^m\times N^n$  with $G(t)=g(t)\oplus h(t)$ is the product metric. We will concentrate on graphical solutions. The short time existence is well-known \cite[\S 6.4]{AndrewsChowGuentherLangford} and the `long time' existence is now standard because of the work of Wang \cite{Wang2002}.  However,  we will sketch the proofs on the existence the solution  for the convenience of the readers. First let us recall the mean curvature flow equation.

\subsection{Mean curvature flow equation and short-time existence}\label{ss-shortime}

Let us first recall the setting of the mean curvature flow.
Let $(M^m, \eta(t))$, $(X^Q,G(t))$ be a compact manifolds where   $G(t)$ is a smooth family of metrics on $X$, $t\in [0,T)$, $T>0$. Let $\nabla$, $\wt\nabla$ be Riemannian connections on $(M,\eta(t))$,  $(X,G(t))$. Let $F:\wt M=: M\times[0,T)\to X$ be a smooth map.
Consider in local coordinates $\{x^i\}$ in $M$, $\{y^\a\}$ in $X$, a section of $\otimes^k(T^*(\wt M))\otimes F^{-1}(T(X))$ is of the following form:
$$
s=s^\a_{i_1\dots i_k}dx^{i_1}\otimes\dots\otimes dx^{i_k}\otimes \p_{y^\a}.
$$
Then at $(x,t)$:
\be
\begin{split}
s_{|p}=&
D_{\p_{ x^p}}s\\
=&s^\a_{i_1\dots i_k;p} dx^{i_1}\otimes\dots\otimes dx^{i_k}\otimes \p_{y^\a}+s^\a_{i_1\dots i_k}dx^{i_1}\otimes\dots\otimes dx^{i_k}\otimes\wt\nabla_{ F_*(\p _{x^p})} \p_{y^\a}\\
=&s^\a_{i_1\dots i_k;p} dx^{i_1}\otimes\dots\otimes dx^{i_k}\otimes \p_{y^\a}+F^\b_ps^\a_{i_1\dots i_k}dx^{i_1}\otimes\dots\otimes dx^{i_k}\otimes\wt\nabla_{\p_{y^\b}} \p_{y^\a}
\end{split}
\ee
Here $;$ is the covariant derivative with respect to  $\eta(t)$. If there is no confusion, we will also use $;$ to denote the covariant derivative with respect to $G(t)$.  Also,
\be
\begin{split}
s_{|t}=&
D_{\p_{t}}s\\
=&\p_ts^\a_{i_1\dots i_k} dx^{i_1}\otimes\dots\otimes dx^{i_k}\otimes \p_{y^\a}+s^\a_{i_1\dots i_k}dx^{i_1}\otimes\dots\otimes dx^{i_k}\otimes\wt\nabla_{F_*(\p t)} \p_{y^\a}\\
=&\p_t s^\a_{i_1\dots i_k} dx^{i_1}\otimes\dots\otimes dx^{i_k}\otimes \p_{y^\a}+F^\b_ts^\a_{i_1\dots i_k}dx^{i_1}\otimes\dots\otimes dx^{i_k}\otimes\wt\nabla_{\p_{y^\b}} \p_{y^\a}
\end{split}
\ee
Suppose $F_0:M\to X$ be an immersion. The mean curvature flow equation is given by:
\be\label{e-mcf-1}
\left\{
  \begin{array}{ll}
    \p_t F=H, & \hbox{on $M\times[0,T)$.}\\
F|_{t=0}=F_0.
  \end{array}
\right.
\ee
where  $F_t:M\to X$ with $F_t(x)=F(x,t)$ is an immersion, and    $H=H(t)$ is the mean curvature vector of $F_t(M)$ with respect to $G(t)$. Let $\eta(t)=F_t^*(G(t))$.
We define $\Delta_{\eta(t)}s=\eta^{ij}s_{|ij}$ then the mean curvature flow is of the form:
\be\label{e-mcf-2}
 \p_tF=\Delta_{\eta(t)}F. 
\ee
We have the following short time existence result.

\begin{lma}\label{l-shortime} In the above setting, let $F_0:M\to (X, G(0))$ be a smooth immersion. Then there is $T_0>0$ such that \eqref{e-mcf-1} has a solution on $M\times[0,T_0)$. Namely, $F_t$ is an immersion, and satisfies \eqref{e-mcf-2} with $\eta(t)=F_t^*(G(t))$. Moreover, the solution is unique.
\end{lma}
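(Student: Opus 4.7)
The plan is to reduce \eqref{e-mcf-1} to a strictly parabolic quasilinear initial-value problem on $M$ and then invoke standard parabolic theory. The right-hand side of \eqref{e-mcf-2} is a geometric object (the mean curvature vector, which is normal to $F_t(M)$), and correspondingly the principal symbol of the quasilinear operator $F\mapsto \Delta_{\eta(F)}F$, viewed as acting on sections of $F^{*}(TX)$, has an $m$-dimensional kernel along the tangential directions. This gauge degeneracy under tangential reparametrizations of $M$ must be broken before standard short-time existence applies.

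I would break it by representing $F_t$ as a normal graph over $F_0$. Since $F_0$ is an immersion of a compact manifold, there is a small open neighborhood $\mathcal{U}$ of the zero section of the normal bundle $N_{F_0}\subset F_0^{*}(TX)$ (taken with respect to $G(0)$) on which the fiberwise exponential map $E(x,V):=\exp^{G(0)}_{F_0(x)}V$ is a smooth immersion and fiberwise embedded. I look for $F_t$ of the form $F_t(x)=E(x,V(x,t))$ with $V(\cdot,t)\in\Gamma(N_{F_0})$ and $V(\cdot,0)=0$. Substituting into $\partial_t F = H(F_t)$ and imposing the normal-graph gauge (projecting the equation onto $N_{F_0}$ and absorbing the tangential residue via a compensating reparametrization determined by an ODE) yields a quasilinear system
\[
\partial_t V = a^{ij}(x,t,V,\nabla V)\,\nabla_i\nabla_j V + b(x,t,V,\nabla V),\qquad V|_{t=0}=0,
\]
with $a^{ij}(x,0,0,0) = \eta^{ij}(0)$ uniformly positive definite. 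Hence the reduced system is strictly parabolic near $V=0$, and the time-dependence of $g(t)$ and $G(t)$ enters only through lower-order terms in $b$.

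Standard existence and uniqueness for quasilinear parabolic systems on a compact manifold (cf.\ \cite[\S 6.4]{AndrewsChowGuentherLangford}) then yield a smooth solution $V$ on $M\times[0,T_0)$ for some $T_0>0$, and $F_t(x):=E(x,V(x,t))$ is a smooth family of immersions (being $C^1$-close to $F_0$ for small $t$) solving \eqref{e-mcf-1}. For uniqueness, given two solutions $F_t,\wt F_t$ of \eqref{e-mcf-1} with the same initial data, a DeTurck-type argument produces time-dependent self-diffeomorphisms $\phi_t,\wt\phi_t$ of $M$ starting at the identity so that $F_t\circ\phi_t$ and $\wt F_t\circ\wt\phi_t$ are both in the normal-graph gauge; they then satisfy the same strictly parabolic Cauchy problem and hence coincide, and tracing the gauge transformations back forces $F_t=\wt F_t$. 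The main obstacle is precisely the tangential gauge degeneracy of \eqref{e-mcf-2}; once that is removed the problem reduces to routine quasilinear parabolic theory, with the time-dependence of $G(t)$ a harmless perturbation of lower-order coefficients.
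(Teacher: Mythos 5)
Your proposal is correct, but it takes a genuinely different route from the paper. The paper uses the DeTurck trick: it keeps the full map $F$ as the unknown and replaces the induced Levi--Civita connection $\Gamma(\eta)$ in $\Delta_\eta F$ by a fixed background connection $\ol\Gamma$ on $M$, producing the strictly parabolic system \eqref{e-Deturk}; the difference $\eta^{ij}(\Gamma^k_{ij}-\ol\Gamma^k_{ij})F^\a_k$ is tangential, so composing the DeTurck solution with the flow of the ODE \eqref{e-ODE} recovers an exact solution of $\p_t F = H$, and uniqueness follows from uniqueness for \eqref{e-Deturk} and for the ODE. You instead fix the gauge by writing $F_t$ as a normal exponential graph over $F_0$, which reduces the unknown from the $Q$ components of $F$ to a section $V$ of the rank-$(Q-m)$ normal bundle and yields a determined, strictly parabolic quasilinear system for $V$; you then must reintroduce a tangential reparametrization (again an ODE) to convert the graph-gauge solution into an exact solution of \eqref{e-mcf-1}, and run the gauge-fixing backwards for uniqueness, which you correctly indicate. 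Both routes are standard and both correctly identify the tangential degeneracy of the symbol as the sole obstacle, with the time-dependence of $G(t)$ entering only in lower-order terms. What each buys: your normal-graph reduction gives a smaller, honestly determined system and makes the immersion property of $F_t$ for small $t$ immediate ($C^1$-closeness to $F_0$), at the cost of setting up the tubular neighborhood of the immersed $F_0$ and of only being able to represent maps near $F_0$; the paper's DeTurck formulation avoids any tubular neighborhood, works directly with the evolving ambient metric, and makes the uniqueness bookkeeping slightly cleaner since only one gauge diffeomorphism of $M$ (rather than a graph representation plus a reparametrization) intervenes. Since short-time existence is all that is needed here, the two arguments are interchangeable.
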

\begin{proof} The proof is exactly as the case when $G(t)$ is a fixed metric, see \cite[\S6.4]{AndrewsChowGuentherLangford}.  We sketch the proof. Let $h$ be a fixed metric on $M$ with connection $\ol \Gamma$. Consider the parabolic systems in local coordinates

\be\label{e-Deturk}
\p_t F^\a=\eta^{ij}\lf(F^\a_{ij}-\ol\Gamma_{ij}^kF^\a_k+\Gamma^\a_{\b\gamma}F^\b_i F^\gamma_j\ri).
\ee
This is a global solution: it does not depend on local coordinates, as
\bee
 \p_t F^\a=\eta^{ij}\lf(F^\a_{ij}-\Gamma_{ij}^kF^\a_k+\Gamma^\a_{\b\gamma}F^\b_iF^\gamma_j\ri)
+\eta^{ij}(\Gamma_{ij}^k-\ol\Gamma_{ij}^k)F^\a_k
\eee

Since this is strictly parabolic and $M$ is compact,  it admits a short time solution with initial map $F_0$ by standard parabolic theory. If $t$ is small, then $F_t(p)=F(p,t)$ is also immersion.  Let us assume this is the case. Consider the  vector field $W(p,t)$ such that
$dF_t(W)=\eta^{ij}( \Gamma_{ij}^k-\ol\Gamma_{ij}^k)F^\a_k\p_{y^\a}$
which is tangent to $F_t(M)$. Since $F_t$ is an immersion, $W$ exists. Consider the solution of ODE:
\be\label{e-ODE}
\left\{
  \begin{array}{ll}
    \frac{d}{dt}\phi(p,t)=-W(\phi(p,t),t)\\
\phi(p,0)=p.
  \end{array}
\right.
\ee
Then $p\mapsto \phi(p,t)$ is a diffeomorphism on $M$. Let
\bee
\wt F(p,t)=F(\phi(p,t),t).
\eee
Then
\bee
\begin{split}
\p_t \wt F(p,t)=&\p_t F(\phi(p,t),t)+dF_t\circ d\phi (\p_t)\\
=& \p_t F(\phi(p,t),t)-dF_t(W) \\
=&\Delta_{\eta(t)}F|_{ \phi(p,t)}\\
=&\Delta_{\phi_t^*(\eta(t))}\wt F|_{\phi_t^{-1}(\phi(p,t))}\\
=&\Delta_{\wt \eta(t)}\wt F
\end{split}
\eee
 where $\eta(t)=F_t^*G(t)$ and $\wt \eta(t)=\wt F_t^*G(t)$. The uniqueness follows from the fact that both solutions to \eqref{e-Deturk} and \eqref{e-ODE} are unique provided the initial data are the same,  see \cite[\S6.4]{AndrewsChowGuentherLangford}.
\end{proof}

\subsection{Geometry of graph and area non-increasing maps}\label{subsec:graph-geom}

We now specialize to the case when $X=M^m\times N^n$, $G(t)=g(t)\oplus h(t)$ is an evolving product metric and the initial data is a graph, i.e. $F_0=\mathrm{Id}\times f_0 {: M\rightarrow X }$. Let $F:M\times [0,T]\to X$ be the solution obtained from  Lemma~\ref{l-shortime}, it is easy to see that the solution remains graphic for a short time. We assume working on $[0,T]$ where $F$ remains graphical and will therefore call  it to be a solution to the graphical mean curvature flow coupled with $G(t)$. In what follows, we will abbreviate it as graphical mean curvature flow when the context is clear.  By graphical condition, there exists smooth family of diffeomorphism $\phi_t\in \mathrm{Diff}(M)$ and maps $f_t:M\to N$ such that $F_t=\left(\mathrm{Id}\times f_t\right)\circ \phi_t$ for each $t\in [0,T]$. At each $(p,t)\in M\times [0,T]$, we let $\lambda_1^2\geq ...\geq \lambda_m^2$ be the eigenvalues of $f_t^*h$ with respect to $g$ at $x=\phi_t(p)$ so that the corresponding $\lambda_i\geq 0$ are the singular values of $df_t$ at $x$. 
The map $f_t:(M,
g(t))\to (N,h(t))$
is said to be:
\be\label{e-defn-area-decreasing}
\left\{
  \begin{array}{ll}
    \text{distance non-increasing, if $\lambda_i\le 1$ for all $i$};\\
\text{distance decreasing, if $\lambda_i< 1$ for all $i$};\\
\text{area non-increasing, if $\lambda_i\lambda_j\le 1$ for all $i\neq j$};\\
\text{area decreasing, if $\lambda_i\lambda_j< 1$ for all $i\neq j$}.\\
  \end{array}
\right.
\ee
The meaning of the terminology is obvious. In order to emphasis, sometimes we will call area decreasing map as strictly area decreasing map. Following \cite{TsuiWang2004}, to detect whether $f_t$ is area non-increasing, we introduce the following tensors to detect whether $f_t$ is distance non-increasing etc. Let $\pi^M, \pi^N$ the projections of $M\times N$ onto $M, N$ respectively. Let
\begin{equation}
s(t)=\pi_M^*g(t)-\pi_N^*h(t),
\end{equation}
and  let $S(t)=F_t^*s(t)$ be a $2$-tensor on $M$ which in local coordinate is given by
\begin{equation}
S_{ij}=F^\a_i F^\b_j s_{\a\b}.
\end{equation}
If there is no possible confusion, we will also $\pi_M^*g(t)$ by $g(t)$ for example.

Let $\Theta= S\owedge \eta$.  In local coordinate,
\begin{equation}
\Theta_{ijkl}=S_{il} \eta_{jk}+S_{jk}\eta_{il}-S_{ik}\eta_{jl}-S_{jl}\eta_{ik}.
\end{equation}
Here  $\owedge$ be the Kulkarni-Nomizu product: namely, for symmetric $(0,2)$ tensors $S, T$ on a manifold,
\be\label{e-KNproduct}
\begin{split}
(S\owedge T)(X,Y,Z,W):=&S(X,W)T(Y,Z)+S(Y,Z)T(X,W)\\
&-S(X,Z)T(Y,W)-S(Y,W)T(X,Z)
\end{split}
\ee
for tangent vectors $X,Y,Z, W$. Equivalently one can write
$$
\Theta=F_t^*g\owedge F_t^*g-F_t^*h\owedge F_t^*h.
 $$
Furthermore, $\Theta$ is a curvature type tensor and   can be considered as a symmetric bilinear form on $\Lambda^2 T^*M$ so that
$$\Theta(X\wedge Y,Z\wedge W)=\Theta(X,Y,W,Z)=(S\owedge \eta)(X,Y,W,Z).$$

Hence if $e_1, e_2$ is an orthonormal pair, then
$$\Theta(e_1\wedge e_2,e_1\wedge e_2)=\Theta(e_1,e_2,e_2,e_1).$$

In our convention, the sectional curvature of the two plane spanned by orthonormal pair $e_1, e_2$ is given by
$$
R(e_1,e_2,e_2,e_1).
$$

If $g$ is a Riemannian metric, then
$$
(g\owedge g)(X,Y,Y,X)=2\lf[g(X,X)g(Y,Y)- \lf(g (X,Y)\ri)^2 \ri]
$$
which is just twice of the area of the parallelogram spanned by $X,Y$. Hence
it is easy to see that $S\ge 0$ with respect the metric $\eta(t)$ if and only if $f_t$ is distance non-increasing, and $\Theta\ge 0$ as a bilinear form on   $\Lambda^2 T^*M$ with respect the metric $\frac12\eta\owedge\eta$ { if and only if $f_t$ is area non-increasing}. More precisely, we have:
\begin{lma}
Given a graphical embedding $F:M\to M\times N$ such that $F=(Id\times f)\circ \phi$ for some map $f:M\to N$ and diffeomorphism $\phi\in \mathrm{Diff}(M)$, then
\begin{enumerate}
\item[(i)] $f$ is distance non-increasing (decreasing resp.) if and only if $S\geq 0$ ($>0$ resp.) on $M$;
\item[(ii)] $f$ is area non-increasing (decreasing resp.)  if and only if $\Theta\geq 0$ ($>0$ resp.) on $M$.
\end{enumerate}
\end{lma}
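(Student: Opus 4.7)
The plan is to work pointwise on $M$ and reduce both assertions to a simultaneous diagonalization argument via the singular value decomposition of $df$. Fix $p \in M$ and set $x = \phi(p)$. Since $F = (\mathrm{Id}\times f)\circ \phi$ with $\phi \in \mathrm{Diff}(M)$, the tensors on $M$ satisfy $S = \phi^*(g - f^* h)$ and $\eta = \phi^*(g + f^* h)$, so $d\phi_p$ is an isomorphism that identifies $(S_p, \eta_p)$ with the pair $(g - f^* h, g + f^* h)$ at $x$. Hence it is enough to prove the equivalences at $x$ for the graph map $\tilde F = \mathrm{Id}\times f$.

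Next, I would choose a $g$-orthonormal basis $\{a_1,\dots,a_m\}$ of $T_xM$ that diagonalizes $f^*h$, so that $(f^*h)(a_i,a_j) = \lambda_i^2\delta_{ij}$, where $\lambda_i \geq 0$ are the singular values of $df_x$. In this basis $S$ and $\eta$ are simultaneously diagonal with entries $1-\lambda_i^2$ and $1+\lambda_i^2$ respectively. Statement (i) then follows immediately: the eigenvalues of $S$ with respect to $\eta$ are $(1-\lambda_i^2)/(1+\lambda_i^2)$, all non-negative (resp. strictly positive) precisely when $\lambda_i \leq 1$ (resp. $\lambda_i < 1$) for every $i$, i.e., when $f$ is distance non-increasing (resp. decreasing).

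For statement (ii), the key observation is that both $\Theta = S \owedge \eta$ and $\tfrac{1}{2}\eta \owedge \eta$, viewed as symmetric bilinear forms on $\Lambda^2 T_x^* M$, are simultaneously diagonal in the induced basis $\{a_i \wedge a_j\}_{i<j}$. Indeed, by the Kulkarni--Nomizu formula \eqref{e-KNproduct}, every summand of $(S \owedge \eta)(a_i,a_j,a_l,a_k)$ contains a factor $S(a_\mu,a_\nu)$ or $\eta(a_\mu,a_\nu)$ that vanishes unless $\mu=\nu$, forcing $\{i,j\}=\{k,l\}$; the same holds for $\eta \owedge \eta$. A direct computation on the diagonal yields $\Theta(a_i \wedge a_j,\, a_i \wedge a_j) = 2(1-\lambda_i^2\lambda_j^2)$ and $(\tfrac{1}{2}\eta \owedge \eta)(a_i \wedge a_j,\, a_i \wedge a_j) = (1+\lambda_i^2)(1+\lambda_j^2) > 0$. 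Therefore $\Theta\geq 0$ (resp. $>0$) with respect to $\tfrac{1}{2}\eta \owedge \eta$ if and only if $\lambda_i\lambda_j \leq 1$ (resp. $<1$) for all $i \neq j$, which is exactly the area non-increasing (resp. decreasing) condition.

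The whole argument is essentially linear-algebraic, with no substantive obstacle; the only step requiring any care is verifying that $\Theta$ and $\tfrac{1}{2}\eta\owedge\eta$ are diagonal in $\{a_i\wedge a_j\}_{i<j}$, which is immediate once one notes that both $S$ and $\eta$ are simultaneously diagonalized on $T_xM$ by the singular-value basis.
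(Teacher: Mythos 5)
Your proof is correct and follows essentially the same route as the paper: the paper also reduces to the singular value decomposition of $df$, notes that in the resulting frame $S$ and $\eta$ are simultaneously diagonal with $S_{ii}=\frac{1-\lambda_i^2}{1+\lambda_i^2}$, and reads off the eigenvalues of $\Theta$ with respect to $\tfrac12\eta\owedge\eta$ as $\Theta_{ijji}=S_{ii}+S_{jj}=\frac{2(1-\lambda_i^2\lambda_j^2)}{(1+\lambda_i^2)(1+\lambda_j^2)}$, which is exactly your diagonal computation up to normalization. The only cosmetic difference is that the paper works in the $\eta$-orthonormal graphical frame while you work in the $g$-orthonormal singular-value basis and compare the two quadratic forms directly.
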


We will also use the notation that $\Theta\ge a$ to mean that $\Theta-\frac a2\eta\owedge\eta\ge0$ as a symmetric bilinear form on $\Lambda^2 T^*M$.

To ease our computation, it is easier to choose a frame which is compatible with the graphical structure. To do this, for $p\in M$ we let $\{u_i\}_{i=1}^n$ be a $g$-orthonormal frame at $x=\phi_t(p)\in M$ and $\{v_i\}_{i=1}^m$ be a $h$-orthonormal frame at $f_t(x)\in N$ so that $df_t(u_i)=\lambda_i v_i$ for all $1\leq i\leq \min\{m,n\}=:\ell$.  We extend $\lambda_i$ to be $0$ for $i>\min\{n,m\}$ for notation convenience.  For metrics $g(t)$ and $h(t)$, we will then use $K^g_{ij}, K^h_{ij}$ to denote the sectional curvature of the plan $\Sigma=\mathrm{span}\{u_i,u_j\}$ and $\Sigma'=\mathrm{span}\{ v_i,v_j\}$.    Their curvature and evolution of metrics  are defined analogously.

With $\{u_i\}_{i=1}^m$ and $\{v_i\}_{i=1}^n$, we define
\begin{equation}\label{frame-onGraph}
\left\{
\begin{array}{ll}
e_i=\displaystyle \frac{u_i+\lambda_i v_i}{\sqrt{1+\lambda_i^2}},\;\; \text{for}\;\; 1\leq i\leq m\\[5mm]
\nu_a=\displaystyle \frac{-\lambda_a u_a+v_a}{\sqrt{1+\lambda_a^2}}, \;\; \text{for}\;\; 1\leq a\leq n
\end{array}
\right.
\end{equation}
so that $\{e_i\}_{i=1}^m$ forms an orthonormal basis on $T_{(x,f_t(x))}F_t(M)$ while $\{\nu_a\}_{a=1}^n$ forms an orthonormal basis on $N_{(x,f_t(x))}F_t(M)$ with respect to the product metric $G$.  Thus, $d\pi_M (e_i)=(1+\lambda_i^2)^{-1/2} u_i$ and hence the map $f_t$ will stay graphical if $\lambda_i$ is bounded uniformly. To unify the notation, we will denote this orthonormal frame as $\{\tilde E_\a\}_{\a=1}^{m+n}$ by defining $\tilde E_\a=e_\a$ for $1\leq \a\leq m$ and  $\tilde E_\a=\nu_a$ for $\a=m+a$ where $1\leq a\leq n$. With respect to this singular decomposition, we then have $\eta_{ij}=\delta_{ij}$ and
\begin{equation}\label{eqn:graph-frame-1}
\left\{
\begin{array}{ll}
S_{ij}=\displaystyle \frac{1-\lambda_i^2}{1+\lambda_i^2}\delta_{ij};\\[3mm]
\Theta_{ijji}=S_{ii}+S_{jj}
\text{for}\;\; i\neq j;\\
C_{ii}=:\displaystyle\frac{2\lambda_i}{1+\lambda_i^2}
\end{array}
\right.
\end{equation}
at the point $(x_0,t_0)\in M\times [0,T]$ under consideration.   Note that $S_{ii}^2+C_{ii}^2=1$ for $1\leq i\leq m$. 

Since $e_i\wedge e_j$ for $i<j$ are orthonormal frames of { $\Lambda^2T^*M$ with respect to} the metric $\frac12\eta\owedge \eta$, one can see that $$\{\Theta_{ijji}|\ \  1\leq i<j\leq m\}$$  are indeed the eigenvalues of $\Theta$ with respect to  $\frac12\eta\owedge \eta$.   Furthermore for $1\leq i,j\leq m$ and $1\leq a,b\leq n$,
\begin{equation}\label{eqn:graph-frame-2}
\left\{
\begin{array}{ll}
s(\tilde E_i,\tilde E_j)=s(e_i,e_j)=S_{ij};\\[3mm]
s(\tilde E_i,\tilde E_{m+a})=\displaystyle \frac{-2\lambda_i}{1+\lambda_i^2}\delta_{ia};\\[3mm]
s(\tilde E_{m+a},\tilde E_{m+b})=\displaystyle\frac{\lambda_a^2-1}{1+\lambda_a^2}\delta_{ab}.
\end{array}
\right.
\end{equation}
We abbreviate it as graphical frame. We will use it when applying tensor maximum principle.

\subsection{Long-time existence}\label{ss-longtime}
Recall that a graphical mean curvature flow is given by a graph of $f_t: (M,g(t))\to (N,h(t))$ where one can use the graphic frame \eqref{frame-onGraph}. Let $\lambda_i$ be the eigenvalues of $f_t^*(h(t))$ as in the definition of the graph frame and let $\Theta$ be as in \eqref{eqn:graph-frame-1}. Following the argument of Wang \cite{Wang2002} which is based on Huisken's monotonicity \cite{Huisken1990}, we prove the following long-time existence criteria with possibly evolving background.

\begin{thm}\label{thm:long-time-MCF-RF}
Let $(M^m,g(t))$ and $(N^n,h(t))$ be two smooth compact evolving families on $[0,T)$ with $T\le +\infty$. Denote $G(t)=g(t)\oplus h(t)$ be the product metric on $X=M\times N$. Suppose $F:M\times [0,t_0)\to (X,G(t))$ is a smooth solution of the  mean curvature flow coupled with $G(t)$ with $t_0<T$.  If for each $t$, $F_t$ is a graph given by strictly area decreasing map $f_t:M\to N$ so that $\Theta(\cdot,t)\geq \delta$ for some $\delta>0$ on $[0,t_0)$, then the flow can be extended beyond $t_0$ and remains graphical.
\end{thm}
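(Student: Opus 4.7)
The plan is to follow the strategy of Wang \cite{Wang2002}, adapted to a smoothly evolving ambient metric. In the static-ambient case, one extends the flow past a maximal graphical time by (a) producing a uniform positive lower bound for the Jacobian of $\pi_M\circ F_t$, and (b) using Huisken's monotonicity together with an $\varepsilon$-regularity argument to promote that slope bound to a uniform bound on the second fundamental form $|A|$, which then permits continuation via the short-time existence lemma. Both steps must be verified in our setting.

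For (a), formula \eqref{eqn:graph-frame-1} identifies the eigenvalues of $\Theta$ with respect to $\tfrac12\eta\owedge\eta$ as
\[
\Theta_{ijji}=\frac{2(1-\lambda_i^2\lambda_j^2)}{(1+\lambda_i^2)(1+\lambda_j^2)},\qquad 1\le i<j\le m.
\]
The hypothesis $\Theta(\cdot,t)\ge \delta$ therefore forces $(1+\lambda_i^2)(1+\lambda_j^2)\le 2/\delta$ for every pair $i\neq j$, and since $m\ge 2$ (otherwise graphicality is vacuous) each singular value satisfies $\lambda_i\le \sqrt{2/\delta-1}$ pointwise on $M\times [0,t_0)$. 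Equivalently, the slope $*\Omega=\prod_i(1+\lambda_i^2)^{-1/2}$ is bounded below by some $\varepsilon_0=\varepsilon_0(\delta,m)>0$ uniformly in time, so $F_t$ stays graphical with controlled Jacobian.

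For (b), since $t_0<T$ the product metric $G(t)=g(t)\oplus h(t)$ has bounded $C^k$-geometry on the compact parabolic region $M\times[0,t_0]$. An evolving-ambient version of Huisken's monotonicity formula, obtained by substituting the backward heat kernel of a fixed reference metric $G(t_0)$ and absorbing the deviation of $G(t)$ into a bounded error, yields a uniform upper bound on the local Gaussian density of $F_t(M)$. Combined with the slope lower bound from (a), which bounds the induced $m$-dimensional area of $F_t(M)$ in an ambient geodesic ball by a constant multiple of the Euclidean area of the projected ball in $M$, this gives a uniform upper bound on local area ratios. An $\varepsilon$-regularity argument (either via White's theorem for Brakke flows, or directly via Ecker-Huisken type gradient estimates for graphical MCF) then yields a uniform $|A|$-bound on $[0,t_0)$, and parabolic bootstrap produces $C^k$-estimates up to $t=t_0$. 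Applying Lemma~\ref{l-shortime} at $t_0$ extends the flow past $t_0$, and the slope bound propagates by continuity to keep the extension graphical.

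The main obstacle is the passage from fixed to evolving ambient in the monotonicity formula: with $G(t)$ varying, the backward heat-kernel weight is no longer parallel and an extra curvature/time-derivative error appears in the reflux of Huisken's computation. On the compact interval $[0,t_0]$ this error is controlled uniformly by the $C^2$-norm of $G$, so it is absorbed into a multiplicative constant depending only on the background, and the rest of the argument is parallel to \cite{Wang2002}.
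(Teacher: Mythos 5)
Your step (a) matches the paper, but step (b) has a genuine gap at its core. A uniform upper bound on local Gaussian density ratios (equivalently, on local area ratios) together with a slope bound does \emph{not} yield an $\varepsilon$-regularity conclusion: White's local regularity theorem requires the Gaussian density ratios to be close to $1$, not merely bounded, and the Ecker--Huisken type gradient/curvature estimates you invoke as an alternative are codimension-one results which do not apply to graphs of maps $M^m\to N^n$ with $n\ge 3$. In arbitrary codimension, bounded slope alone is not known to prevent singularities; this is exactly why Wang's theorem (and the present one) needs the area-decreasing hypothesis to enter the argument \emph{analytically}, not just as a pointwise bound on the singular values $\lambda_i$. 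Your proposal uses $\Theta\ge\delta$ only to bound the slope, and then the passage ``bounded density $\Rightarrow$ $|A|$ bound'' is unjustified; this is where the real work of the proof lies.

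The missing ingredient is the almost-monotone scalar quantity built from $\Theta$: by Corollary~\ref{c-det}, since $\Theta\ge\delta>0$ one has, for $\phi=C-\log\bigl(\det\Theta/\det(\tfrac12\eta\owedge\eta)\bigr)$ with $1\le\phi\le C$,
\begin{equation*}
\Bigl(\frac{\partial}{\partial t}-\Delta_{\eta}\Bigr)\phi\le -a|A|^2+C_2 ,
\end{equation*}
and it is the good term $-a|A|^2$ that drives the argument. The paper embeds $(X,G(t_0))$ isometrically in $\mathbb{R}^Q$, extends $G(t)$ to a tubular neighborhood with $\wt G(t_0)$ Euclidean, and shows that both $\int_M\rho\,d\mathrm{vol}_\eta$ and $\int_M\phi\,\rho\,d\mathrm{vol}_\eta$ (with $\rho$ the Euclidean backward heat kernel) have uniformly bounded time derivatives, the evolving-metric errors being $O(\tau)$ as you anticipated; the boundedness of the Gaussian integral uses the slope bound (your step (a)) and bounded geometry. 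One then follows Wang's argument, using the convergence of these two weighted integrals and the $-a|A|^2$ term, to conclude that the Gaussian density at any point $(y_0,t_0)$ is exactly $1$, and only then does White's theorem give regularity and allow the flow to be continued graphically via Lemma~\ref{l-shortime}. Without the $\log\det\Theta$ quantity (or an equivalent almost-monotone quantity with an $|A|^2$ good term), your outline does not close.
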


\begin{proof}
 Since  $t_0<T$,  both $g(t)$ and $h(t)$ has bounded geometry of infinity order on $[0,t_0]$.  Moreover since $\Theta\geq \delta>0$ on $M\times [0,t_0)$,  we have $0\leq \lambda_i\leq \frac2\delta$ by \eqref{eqn:graph-frame-1}  on $M\times[0,t_0)$, see \cite[Lemma 3.3]{TsaiTsuiWang2023} for example.

   In what follows, we will use $C_i$ to denote any positive constant  depending only on $G|_{[0,T_0]},\delta, n,m, X$. By the proof of \cite[Theorem 3.2]{TsaiTsuiWang2023}  (see Corollary \ref{c-det} below), and the fact that  $\lambda_i\leq \frac2\delta$, we have
\bee
\heat \log \lf(\frac{\det\Theta}{\frac12\eta\owedge\eta}\ri)\ge a |A|^2-C_2.
\eee
for some $a>0$ depending only on $m, n$ and $|A|$ is the norm of the second fundamental form. Here $\eta(t)=F^*_t(G(t))$. Since $0\ge \log \frac{\det\Theta}{\frac12\eta\owedge\eta}$ and $\Theta\ge \delta$, we can find $C>0$ such so that   $\phi=C-\log \lf(\frac{\det\Theta}{\frac12\eta\owedge\eta}\ri) $ satisfies  $1\leq \phi\leq C$ and
\be\label{e-phi-longtime}
\heat\phi\le -a|A|^2+C_2.
\ee

We want to follow the argument of Wang \cite{Wang2002}. We isometrically embed $\left(X, G(t_0)\right)$ into $\mathbb{R}^Q$. Denote its image also by $X$. In a tabular neighborhood $U$ of $X$, extend $G(t)$ smoothly to $\wt G(t)$ in $U$ so that $\wt G(t_0)$ is the Euclidean metric on $U$. This can be done  since $G(t)$ is smooth on $X$. Now we re-write the mean curvature flow equation as
\begin{equation}\label{e-mcf-3}
\partial_t F= H=\wt H+ E_1
\end{equation}
where   $E_1=-\tr_{F_t(M)}A_{ (U,\wt G(t));(X,G(t))}$ and $\wt H$ is the mean curvature of $F_t(M)$ in $(U,\wt G(t))$ and $A_{ (U,\wt G(t));(X,G(t))}$ is the second fundamental form of $(X,G(t))$ in $(U,\wt G(t))$. We consider the ($m$-dimensional) backward heat kernel in $\mathbb{R}^Q$ centered at $(y_0,t_0)$: for $t<t_0$ and $y\in \mathbb{R}^Q$,
\begin{equation}
\rho_{y_0,t_0}(y,t)=\frac1{ (4\pi (t_0-t))^{m/2}}\cdot \exp\left(-\frac{d^2_{euc}(y,y_0)}{4(t_0-t)} \right).
\end{equation}
where $d_{euc}(x,y)=|x-y|$ denotes the standard Euclidean distance between $x,y\in \mathbb{R}^Q$.  We will also use $\bullet$ to denote the Euclidean inner product and denote $\tau(t)=t_0-t$. By translation, we assume $y_0$ to be the origin.  Consider the function
$
\rho(x,t)=\rho_{y_0,t_0} \left( F(x,t),t\right)$
on $M\times [0,t_0)$.  By differentiating $\rho$ with respect to $x,t$, we have
\begin{equation}
\left\{
\begin{array}{ll}
\partial_t \rho&\displaystyle=\rho \left[ \frac{m}{2\tau}-\frac{|F|^2}{4\tau^2}-\frac{\partial_t F \bullet F}{2\tau}\right];\\[5mm]
\rho_i&\displaystyle=-\rho \cdot \frac{F_i\bullet F}{2\tau};\\[5mm]
\rho_{;ij}&\displaystyle=\rho \cdot \frac{F_i\bullet F}{2\tau}  \frac{F_j\bullet F}{2\tau} -\rho \cdot \frac{F_{;ij}\bullet F +F_i\bullet F_j}{2\tau}
\end{array}
\right.
\end{equation}
where $;$ is the covariant derivatives with respect to $\eta=F^*G$ which is the induced metric of $G(t)$ on the submanifold $F_t(M)$.  Hence,
\begin{equation}
\begin{split}
 \left( \frac{\partial}{\partial t} +\Delta_\eta \right) \rho
&=\rho \left[ \frac{m}{2\tau}-\frac{\eta^{ij}F_i\bullet F_j}{2\tau}+ \frac{\eta^{ij}(F_i\bullet F)( F_j\bullet F) }{ 4\tau^2} -\frac{|F|^2}{4\tau^2}\right]\\
&\quad -\rho \cdot \frac{\lf((\partial_t+\Delta_{\eta,euc})F\ri) \bullet F }{2\tau}.
\end{split}
\end{equation}

Since $(X,G(t))$ is isometrically embedded into $(\mathbb{R}^Q,\wt G(t))$ and $\wt G(t_0)$ is the Euclidean metric, for the orthonormal frame $\{e_i\}_{i=1}^m$ with respect to $\eta(t)$, $F_i=dF(e_i)$ so that
\begin{equation}
\begin{split}
F_i \bullet F_i &=\wt G({t_0})\left( dF(e_i), dF(e_j) \right)\\
&=\wt G(t)\left( dF(e_i), dF(e_j) \right)+O(\tau)\\
&=\eta(e_i,e_j)+ O(\tau)\\
&=\delta_{ij}+O(\tau)
\end{split}
\end{equation}
because $dF$ is bounded uniformly with respect to $\wt G$ and $\wt G$ varies smoothly in $t$. Therefore,
\begin{equation}
\begin{split}
\frac{m}{2\tau}-\frac{\eta^{ij}F_i \bullet F_j}{2\tau}\leq C_3.
\end{split}
\end{equation}

On the other hand, if we denote $\zeta= F_t^* G(t_0)$ and decompose $F$ (as a vector in $\R^Q$) using Euclidean metric $\wt G(t_0)$:
\begin{equation}
F=F^\perp + a^i F_i
\end{equation}
where $a^i= \zeta^{ij} F\bullet F_j$. Here $F^\perp$ is the normal part of $F$ on $F_t(M)$ with respect to the Euclidean metric.  Since $G(t)\to G(t_0)$ smoothly as $t\to t_0$,  $\zeta^{ij}=\delta^{ij}+O(|t_0-t|)$ and thus
\begin{equation}
\begin{split}
|F|^2=F\bullet F&= [ F^\perp + (\zeta^{ij} F\bullet F_i) \cdot F_j]\bullet [ F^\perp + (\zeta^{kl} F\bullet F_k) \cdot F_l]\\
&=|F^\perp|^2 +\sum_{i=1}^m (F_i \bullet F)^2 + O(\tau)\cdot |F|^2.
\end{split}
\end{equation}

We now simplify $(\partial_t+\Delta_{\eta,euc})F\bullet F$ further by first noting that $\Delta_{\eta,euc}F\perp F_t(M)$ with respect to the Euclidean metric $\wt G(t_0)$,  and
\bee
\begin{split}
\Delta_{\eta,euc}F^\a&=\Delta_{\eta,\wt G} F^\a+\eta^{ij} \Psi_{\b\gamma}^\a F^\b_i F^\gamma_j
\end{split}
\eee
where $\Psi= \Gamma^{\wt G}(t)-\Gamma^{euc}=\Gamma^{\wt G(t)}-\Gamma^{\wt G(t_0)}=O(\tau)$. Hence
\be
\begin{split}
\Delta_{\eta,euc}F=\wt H +E_2\\
\end{split}
\end{equation}
where  $E_2=O(\tau)$ because the energy density of $F_t$ as a map from $(M,\eta(t))$ to $(X,G(t))$ is $m$ and $G(t)$ is uniformly equivalent to $G(t_0)$ which is induced by the  Euclidean metric. Therefore using \eqref{e-mcf-3}, we deduce
\begin{equation}
\begin{split}
\lf((\partial_t+\Delta_{\eta,euc}) F\ri)\bullet F &=(H+\wt H+E_2) \bullet F\\
&=(2\wt H +E_1) \bullet F^\perp+O(\tau) \cdot |F|
\end{split}
\end{equation}
where we have used $\Delta_{\eta,euc}F\perp F_t(M)$, $E_2=O(\tau)$, $G(t)\to G(t_0)$ as $t\to t_0$ smoothly and $E_1\in N(X)$ in $U$ with respect to $G(t)$.

Hence, we have
\begin{equation}\label{eqn:rho}
\begin{split}
 \left( \frac{\partial}{\partial t} +\Delta_\eta \right) \rho&\leq \rho\left(C_4+\frac{C_5|F|^2}{\tau} -\frac{|F^\perp|^2}{4\tau^2}-\frac{ (\wt H +\frac12 E_1)\bullet F^\perp}{\tau }\right).
\end{split}
\end{equation}

We are  now in position to apply Wang's argument \cite{Wang2002}. We start by observing the Gaussian density is bounded from the  $C^1$ bound of $f_t$.
\begin{claim} \label{claim:gas-bdd}
For any $\a>0$, there exists $C_\a>0$ such that for all $t\to t_0$,
\begin{equation}
\int_{M} (t_0-t)^{-m/2}\exp\left(-\frac{|F(x,t)|^2}{\a(t_0-t)}  \right)\,d\mathrm{vol}_{\eta(t)}(x)\leq C_\a.
\end{equation}
\end{claim}

Suppose the claim is true,
first observe that
\begin{equation*}
\begin{split}
\frac12\eta^{ij}\partial_t \eta_{ij}&=-|H|_{G^2(t)} -\eta^{ij} J(dF(\partial_i), dF(\partial_j))\\
&\leq  -  \wt G(t)(H,\wt H)+C_6\\
&=-\wt G(t)(\wt H+E_1, \wt H)+C_6
\end{split}
\end{equation*}
since $J=\partial_t G$ is bounded and $E_1\perp H$ with respect to $\wt G(t)$. Combining with \eqref{eqn:rho}, \eqref{e-phi-longtime} and the fact that $\phi\ge 1$ is bounded, we have
\begin{equation}
\begin{split}
&\frac{d}{dt}\int_{M} \phi  \rho \,d\mathrm{vol}_{\eta(t)}\\
&=\int_{M} \left( \partial_t \phi \cdot  \rho+ \phi \cdot \partial_t \rho+\frac12\phi\rho\cdot \tr_\eta (\partial_t \eta) \right) \,d\mathrm{vol}_{\eta}\\
&=\int_{M}  \left(\frac{\partial}{\partial t}-\Delta_\eta \right) \phi \cdot \rho+\phi \cdot\left(\frac{\partial}{\partial t}+\Delta_\eta \right)  \rho  \,d\mathrm{vol}_{\eta}\\
&\quad +\frac12 \int_M\phi\rho\cdot   \tr_\eta \partial_t\eta \,  \,d\mathrm{vol}_\eta\\
&\leq   - a\int_M \rho |A|^2 \rho \,d\mathrm{vol}_\eta+C_7\int_M\rho(1+\frac{|F|^2}\tau)  d\mathrm{vol}_\eta\\
&+\int_M \phi \rho\bigg[-\frac{|F^\perp|^2}{4\tau^2}-\frac{ (\wt H +\frac12 E_1)\bullet F^\perp}{\tau } -\wt G(t)(\wt H+E_1, \wt H)\bigg]\, d\mathrm{vol}_\eta.
\end{split}
\end{equation}
On the other hand,
\bee
\begin{split}
\wt G(t)(\wt H+E_1, \wt H)\ge &\wt G(t_0)(\wt H+E_1,\wt H)-O(\tau)|\wt H+E_1|_{euc} |\wt H|_{euc}\\
\ge & (\wt H+E_1)\bullet\wt H- O(\tau)(|\wt H+\frac12 E_1|^2_{euc} +|E_1|^2_{euc})\\
\ge&(1-C_9\tau)|\wt H+\frac12 E_1|^2_{euc} -C_8
\end{split}
\eee
Hence
\bee
\begin{split}
&-\frac{|F^\perp|^2}{4\tau^2}-\frac{ (\wt H +\frac12 E_1)\bullet F^\perp}{\tau } -\wt G(t)(\wt H+E_1, \wt H)\\
\le& -\frac{|F^\perp|^2}{4\tau^2}-\frac{ (\wt H +\frac12 E_1)\bullet F^\perp}{\tau }-(1-C_9\tau)|\wt H+\frac12 E_1|^2_{euc} +C_8\\
\le &C_{10}\frac{|F|^2}\tau.
\end{split}
\eee
Therefore,
\bee
\frac{d}{dt}\int_{M} \phi  \rho \,d\mathrm{vol}_{\eta(t)}\le C_{12}\int_{M}   \rho (1+\frac{|F|^2}\tau) \,d\mathrm{vol}_{\eta(t)}\le C_{13}
\eee
by the  Claim~\ref{claim:gas-bdd}.
This shows that
$
\lim_{t\to t_0} \int_M \phi \rho \,d\mathrm{vol}_\eta$ exists by monotone convergence Theorem. In the above argument, if we replace $\phi$ by $1$, we also have
\begin{equation}\label{monot-Gauss}
\frac{d}{dt}\int_{M} \rho \,d\mathrm{vol}_{\eta}\leq C_{14}.
\end{equation}
So  $\lim_{t\to t_0}\int_{M} \rho \,d\mathrm{vol}_{\eta}$ exists.
 Now we can follow the argument in \cite{Wang2002} to deduce
\begin{equation}
\lim_{t\to t_0}\int_{F_t(M)} \rho_{y_0,t_0} \,d\mathrm{vol}_{\wt G(t)}=1.
\end{equation}
It now follows from the proof of White's regularity Theorem \cite[Theorem 3.5]{White2005} as in \cite{Wang2002} using \eqref{monot-Gauss} that $(y_0,t_0)$ is a regular point.

It remains to prove Claim~\ref{claim:gas-bdd}. Since $F_t$ is given by graph of $f_t:M\to N$, we might assume $F(x,t)=(x,f_t(x))\in M\times N$ embedded in $\mathbb{R}^Q$. Since $X$ is isometrically embedded into $\mathbb{R}^Q$ and $X$ is compact,  there exists $C_4>0$ such that
$$C_4^{-1} d_{euc}(p,q)\leq d_{G(t_0)}(p,q)\leq C_4 d_{euc}(p,q)$$
for all $p,q\in X\subset \mathbb{R}^Q$. Using also the fact that $G(t)$ are uniformly equivalent to $G(t_0)$ on $X$, we see that for all $p,q\in X\subset \mathbb{R}^Q$,
$$C_4^{-1} d_{euc}(p,q)\leq d_{G(t)}(p,q)\leq C_4 d_{euc}(p,q).$$

Since $G(t)=\pi_M^*g(t)\oplus \pi_N^* h(t)$, we have
\begin{equation}
\left(d_{\pi_M^*g(t)}(p,q)\right)^2\leq \left(d_{G(t)}(p,q) \right)^2\leq  C_4 \left(d_{euc}(p,q)\right)^2.
\end{equation}

Furthermore on $M$, since $F_t=\mathrm{Id}\times f_t$ and $\lambda_i\leq C_1$,
\begin{equation}
g_{ij}\leq \eta_{ij}=g_{ij}+f_i^\a f_j^\b h_{\a\b}\leq (1+C_1^2)g_{ij}.
\end{equation}
Hence $\eta(t)$ is uniformly equivalent to $g(t)$ on $M\times [0,t_0)$.  Therefore,
\begin{equation}
\begin{split}
&\quad \int_{M} (t_0-t)^{-m/2}\exp\left(-\frac{d^2_{euc}(F(x,t),y_0)}{\a(t_0-t)}  \right)\,d\mathrm{vol}_{\eta(t)}(x)\\
&\leq C_5 \int_{M_t} (t_0-t)^{-m/2}\exp\left(-\frac{d^2_{G(t)}(y,y_0)}{\a(t_0-t)}  \right)\,d\mathrm{vol}_{\pi_M^*g(t)}(y)\\
&\leq C_5\int_{M}(t_0-t)^{-m/2}\exp\left(-\frac{d^2_{g(t)}(x,x_0)}{\a(t_0-t)}  \right)\,d\mathrm{vol}_{g(t)}(x).
\end{split}
\end{equation}

Since $g(t)$ has bounded geometry of infinity order,  the integral is bounded by  constant depending a constant independent of $t$. This proves  Claim~\ref{claim:gas-bdd}.
\end{proof}

\section{ Proof of the Main Theorems}

In this section, will prove our main results. We will continue to work under the setting in Section~\ref{sec:graph-geom}, and  show that under certain assumption on $G$, both the area non-increasing and strictly area decreasing will be preserved under the mean curvature flow coupled with the evolving metric $G(t)$.

\subsection{Evolution equations under Uhlenbeck trick}\label{subsec:Uhlenbeck}
We want to study the evolution of $S$ and $\Theta$. To simplify computation, we use the abstract vector bundle method as in \cite{TsaiTsuiWang2023} which we refer it as  Uhlenbeck trick. Let $x_0\in M$ and $t_0\in (0,T]$ be fixed. Let $\{E_A\}_{A=1}^m$ be an orthonormal frame with respect to $\eta(t_0)$ at $x_0\in M$ so that $dF_{t_0}(E_i)=e_i$ given by \eqref{frame-onGraph} at $\left(x_1,f_{t_0}(x_1)\right)\in M\times N$ where $x_1=\phi_{t_0}(x_0)$. We extend $E_A$ around $x_0\in M$ by parallel transport with respect to $\eta(t_0)$, so that $\nabla E_A=\Delta E_A=0$ at $(x_0,t_0)$ and $\{dF_t(E_A)\}_{A=1}^m$ is an orthonormal basis for $t=t_0$. Consider the O.D.E:
\begin{equation}\label{eqn:Uhlenbeck}
\left\{
\begin{array}{ll}
\partial_t E_A^k=-\frac12 \eta^{jk} F^\a_i F^\b_j J_{\a\b} E_A^i -\eta^{jk}  H^\a A^\b_{ij}G_{\a\b} E_A^i ; \\
E_A^k(x,t_0)=E_A^k(x)
\end{array}
\right.
\end{equation}
where $J=\partial_t G$. Here we use the notation $H=H^\a \partial_\a$ and $A_{ij}^\a=F^\a_{|ij}$ to denote the mean curvature vector and the second fundamental form of $F_t(M)$ in $(M\times N,G(t))$. By our choice of endomorphism, direct computation shows that for all $t\in (0,T]$,
\begin{equation}
\eta(E_A,E_B)= \delta_{AB}.
\end{equation}

Now we are ready to derive evolution equations for $S$ and $\Theta$ using the graphical frame \eqref{frame-onGraph} and the Uhlenbeck trick. More precisely, at each $(x_0,t_0)\in M\times (0,T]$, we choose the graphical frame so that \eqref{eqn:graph-frame-1} holds at $(x_0,t_0)$ and then we extend it locally around $(x_0,t_0)$ in space-time using the discussion above to obtain $\{E_i(x,t)\}_{i=1}^m$ nearby $(x_0,t_0)\in M\times (0,T]$.

In the following, we will write $S_{ii}=S(E_i,E_i)=\frac{1-\lambda_i^2}{1+\lambda_i^2}$ and its conjugate $C_{ii}=\frac{2\lambda_i}{1+\lambda_i^2}$.  We also denote $K^g_{ip}$ to be the sectional curvature of the two plane spanned by $u_i, u_p$, etc. $K^g_{pp}=0$ by convention. In the setting of graph frame  \eqref{frame-onGraph} at a point, we always assume that $\lambda_1\ge\lambda_2\ge\cdots\ge\lambda_m\geq 0$. Hence we have
\be\label{e-S-min}
\left\{
  \begin{array}{ll}
    S_{11}\le S_{22}\le\cdots\le S_{mm};\\[2mm]
        C_{11}\ge C_{22}\ge\cdots\ge C_{mm};\\[2mm]
\Theta_{1221}\le \Theta_{ijji} \;\;\hbox{for all $i\neq j$}.
  \end{array}
\right.
\ee
 For any $t\ge0$, we also define
 \be\label{e-mt}
\mathfrak{m}(t)=\inf_{x\in M}\{\text{smallest eigenvalue of $\Theta(x,t)$}\}.
\ee

With the frame $\{E_i(x,t)\}_{i=1}^m$ around $(x_0,t_0)$, we might treat $S(E_i,E_j)$ as a locally defined function.  We have  the following evolution equation of $S$.
\begin{lma}\label{lma:evo-S}
For any $(x_0,t_0)\in M\times (0,T]$, under the  graphical frame $\{E_i\}_{i=1}^m$ we have
\begin{equation}
\begin{split}
\heat S(E_i,E_i)\Big|_{(x_0,t_0)}&=\mathbf{I}+\mathbf{II}+\mathbf{III}
\end{split}
\end{equation}
where
\begin{equation}
\left\{
\begin{array}{ll}
\mathbf{I}&\displaystyle=\sum_{a=1}^n\sum_{l=1}^m 2(S_{ii}+S_{aa}) |A^{a+m}_{il}|^2;\\
\mathbf{II}&\displaystyle=C_{ii}^2\cdot  \sum_{k=1}^m \frac{K^g_{ik}-\lambda_k^2 K^h_{ik}}{1+\lambda_k^2};\\
\mathbf{III}&=\displaystyle \frac12 C_{ii}^2 \cdot \left( \partial_t g_{ii}-\partial_t h_{ii}\right).
\end{array}
\right.
\end{equation}
Here $\lambda_k^2 K^h_{ik}$ is understood to be zero if $k>n$.
\end{lma}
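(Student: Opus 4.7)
The plan is to compute $\heat[S(E_i,E_i)]$ at the distinguished point $(x_0,t_0)$, exploiting the Uhlenbeck normalisation $\nabla E_A = \Delta E_A = 0$ together with $dF_{t_0}(E_A) = e_A$ (the graphical frame). Because the spatial covariant derivatives of $E_A$ vanish at the base point, the Laplacian passes through the contraction and only the time derivative produces a frame-correction, giving
\[
\heat\bigl[S(E_i,E_i)\bigr]\Big|_{(x_0,t_0)} = (\heat S)_{kl}\, E_i^k E_i^l + 2\, S_{kl}\, (\partial_t E_i^k)\, E_i^l,
\]
where $\heat S_{kl}$ is the tensorial heat operator applied to $S_{kl} = s_{\alpha\beta} F^\alpha_k F^\beta_l$ with $s = \pi_M^* g(t) - \pi_N^* h(t)$.

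Next I would expand $\heat S_{kl}$ directly. A key structural fact is that, for each fixed $t$, the product form of $G(t) = g(t) \oplus h(t)$ makes $s$ a parallel $2$-tensor on $(X, G(t))$, so $\wt\nabla s = 0$ and any ambient Hessian of $s$ drops out. Differentiating $S_{kl}$ along the MCF $\partial_t F = H$, and commuting the induced covariant derivatives $\wh D_k \wh D_l$ with the embedding, produces exactly three types of contribution: (a) a second-fundamental-form quadratic of the form $s(\nu_a,\nu_a)|A^{a+m}|^2$ coupled with $s(e_i, e_i)$-traces; (b) an ambient Riemann term from $[\wh D_k, \wh D_l]$; and (c) the evolving-metric term $(\partial_t s)(F_k, F_l)$ coming from $\partial_t s_{\alpha\beta}$.

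Finally I would pass to the graphical frame \eqref{frame-onGraph} and substitute \eqref{eqn:graph-frame-1}--\eqref{eqn:graph-frame-2} to identify each piece. For (a), using $s(\nu_a,\nu_a) = (\lambda_a^2-1)/(1+\lambda_a^2) = -S_{aa}$ and $s(e_i,e_i) = S_{ii}$, the quadratic reorganises into $\sum_{a,l} 2(S_{ii}+S_{aa})|A^{a+m}_{il}|^2$, giving $\mathbf{I}$. For (b), the ambient Riemann tensor of the product splits as $\Rm^g \oplus \Rm^h$; plugging in $e_i = (1+\lambda_i^2)^{-1/2}(u_i + \lambda_i v_i)$ (and likewise for $e_k$) kills all cross terms and leaves precisely $K^g_{ik}$ and $\lambda_k^2 K^h_{ik}$ weighted by $(1+\lambda_k^2)^{-1}$, while the diagonal $\lambda_i$-factors collect into the overall prefactor $C_{ii}^2 = 4\lambda_i^2/(1+\lambda_i^2)^2$; this produces $\mathbf{II}$. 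For (c), the bare $(\partial_t s)(e_i,e_i) = (1+\lambda_i^2)^{-1}\partial_t g_{ii} - \lambda_i^2(1+\lambda_i^2)^{-1}\partial_t h_{ii}$ combines with the $J$-piece of the Uhlenbeck ODE \eqref{eqn:Uhlenbeck} (the term $-\tfrac12 \eta^{jk} F^\alpha_i F^\beta_j J_{\alpha\beta} E_A^i$), while the $\la H, A_{ij}\ra$-piece of the ODE cancels a matching contribution in (a); the sum collapses to $\mathbf{III} = \tfrac12 C_{ii}^2(\partial_t g_{ii} - \partial_t h_{ii})$.

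The principal obstacle is the last identification: one must verify that the two parts of the Uhlenbeck ODE precisely absorb both the ``wrong-weight'' portion of $\partial_t s$ and the linear-in-$A$ remainder left over from $\Delta S_{kl}$, converting them into the clean $C_{ii}^2$-weighted expression. Once the graphical frame is fixed and $J = \partial_t g \oplus \partial_t h$ is split along $TM \oplus TN$, this reduces to algebraic identities among rational functions of $\lambda_i$ of the shape $(1+\lambda_i^2)^{-1} - \text{correction} = 2\lambda_i^2(1+\lambda_i^2)^{-2}$, but the bookkeeping is delicate and is where a careful index-tracking argument is required.
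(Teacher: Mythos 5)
Your proposal is correct and follows essentially the same route as the paper: Uhlenbeck-normalize the frame so that $\heat[S(E_i,E_i)]$ splits into the tensorial heat evolution of $S=F^*s$ plus the correction $2S(\partial_t E_i,E_i)$, then expand the tensorial part using the Ricci and Gauss identities (implicitly exploiting that $s$ is $\wt\nabla$-parallel because $G$ is a product), and finally evaluate everything in the graphical frame using the splitting $\Rm^G=\Rm^g\oplus\Rm^h$. The only small imprecision is attributional: the $\langle H,A\rangle$-piece of the Uhlenbeck ODE cancels the $H$-coupled, linear-in-$A$ terms produced by the Gauss/Ricci identity (the terms $-S_j^q A_{iq}^\delta H^\gamma G_{\delta\gamma}$ in the paper's tensor formula), not literally a term inside your quadratic bucket (a); with that relabelling, your identifications of $\mathbf{I}$, $\mathbf{II}$, $\mathbf{III}$ match the paper's proof.
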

\begin{proof}
It is a slight modification of  \cite[(3.7)]{TsuiWang2004}. Since we are working on an evolving background, we include the proof for readers' convenience. We start with the evolution equation of the tensor $S$. Firstly using $\partial_t F=H=\tau(F)$ and $s_{\a\b}=g_{\a\b}-h_{\a\b}$,  we have
\begin{equation}
\begin{split}
\partial_t S_{ij}&=D_t (F^\a_i F^\b_j s_{\a\b})\\
&=\nabla_i \Delta_\eta F^\a \cdot F^\b_j \tilde S_{\a\b}+\nabla_j \Delta_\eta F^\b \cdot F^\a_i \tilde S_{\a\b}+F^\a_i F^\b_j \left(\partial_t g_{\a\b}-\partial_t h_{\a\b} \right)
\end{split}
\end{equation}
while
\begin{equation}
\begin{split}
\Delta_\eta S_{ij}&= \Delta_\eta F^\a_i \cdot F^\b_j \tilde S_{\a\b}+ \Delta_\eta F^\b_j \cdot F^\a_i \tilde S_{\a\b}+2F^\a_{|ik} F^\b_{|jl}\eta^{kl} \tilde S_{\a\b}.
\end{split}
\end{equation}

Now we apply the Ricci identity of $(M,\eta)$ so that
\begin{equation}
\begin{split}
 \Delta_\eta F^\a_i&= \nabla_i \Delta_\eta F^\a+ R_i^p F^\a_p -\eta^{kl}\tilde R_{\delta\gamma\e}\,^\a F^\gamma_k F^\delta_i F^\e_l
\end{split}
\end{equation}
where $R$ and $\tilde R$ denote the curvature of $\eta$ and $G$ respectively. On the other hand, Gauss equation infers that
\begin{equation}
\begin{split}
R_{iq}&=\eta^{kl} R_{iklq}=\eta^{kl} \left( \tilde R_{\delta\gamma\e\sigma} F_i^\delta F_k^\gamma F_l^\e F^\sigma_q- A_{il}^\delta A_{kq}^\gamma G_{\delta\gamma}+A_{iq}^\delta A_{kl}^\gamma G_{\delta\gamma}  \right)
\end{split}
\end{equation}

Combining all, we arrive at
\begin{equation}\label{eqn:evo-S-tensor}
\begin{split}
\heat S_{ij}&=\eta^{kl}   S_{j}^qA_{il}^\delta A_{kq}^\gamma G_{\delta\gamma}   +\eta^{kl} S_{i}^q A_{jl}^\delta A_{kq}^\gamma G_{\delta\gamma}\\
&\quad - S_{j}^q A_{iq}^\delta H^\gamma G_{\delta\gamma} - S_{i}^q A_{jq}^\delta H^\gamma G_{\delta\gamma}   \\
&\quad+ (G^{\a\sigma} -\eta^{pq}  F^\a_p F^\sigma_q )  \eta^{kl}  F^\b_jF_i^\delta F_k^\gamma F_l^\e  s_{\a\b}   \tilde R_{\delta\gamma\e\sigma} \\
&\quad + (G^{\b\sigma} -\eta^{pq}  F^\b_p F^\sigma_q )  \eta^{kl}  F^\a_iF_j^\delta F_k^\gamma F_l^\e  s_{\a\b}   \tilde R_{\delta\gamma\e\sigma} \\
&\quad -2A^\a_{ik} A^\b_{jl}\eta^{kl} s_{\a\b} +F^\a_i F^\b_j ( \partial_t g_{\a\b}-\partial_t h_{\a\b}).
\end{split}
\end{equation}

Now we employ the bundle trick by considering $S(E_i,E_i)$ for $1\leq i\leq m$. Using \eqref{eqn:graph-frame-1}, \eqref{eqn:graph-frame-2}, \eqref{eqn:Uhlenbeck} and \eqref{eqn:evo-S-tensor}, at $(x_0,t_0)$ it satisfies
\begin{equation}\label{eqn:evo-S-frame}
\begin{split}
&\quad \heat  S(E_i,E_i)\\
&=\left[\heat S \right] (E_i,E_i)+2S(\partial_t E_i,E_i)\\
&=\left[\frac{2(1-\lambda_i^2)}{1+\lambda_i^2} |A^{a+m}_{il}|^2+\frac{2(1-\lambda_a^2)}{1+\lambda_a^2}|A^{a+m}_{il}|^2\right]\\
&\quad -\left[\frac{2\lambda_i}{1+\lambda_i^2}\tilde R(\tilde E_i,\tilde E_k,\tilde E_k,\tilde E_{m+i})+\frac{2\lambda_i}{1+\lambda_i^2} \tilde R(\tilde E_i,\tilde E_k,\tilde E_k,\tilde E_{m+i})\right]\\
&\quad  +\left[(\partial_t g-\partial_t h)(\tilde E_i,\tilde E_i)-S_{ii}\cdot \left(\partial_t g+\partial_t h \right)(\tilde E_i,\tilde E_i)\right]\\
&=\mathbf{I}+\mathbf{II}+\mathbf{III}.
\end{split}
\end{equation}

For $\mathbf{II}$, using the product structure of $G(t)$,  we have
\begin{equation}
\begin{split}
 \tilde R(\tilde E_i,\tilde E_k,\tilde E_k,\tilde E_{m+i})
&=\frac{\tilde R(u_i+\lambda_i v_i,u_k+\lambda_k v_k,u_k+\lambda_k v_k, -\lambda_i u_i+v_i)}{(1+\lambda_k^2)(1+\lambda_i^2)}\\
&=\frac{-\lambda_i R^g(u_i,u_k,u_k,u_i)+ \lambda_i \lambda_k^2 R^h(v_i,v_k,v_k,v_i)}{(1+\lambda_k^2)(1+\lambda_i^2)}\\
&=-\frac{\lambda_i}{1+\lambda_i^2}\cdot \frac{K^g_{ik}-  \lambda_k^2 K^h_{ik}}{1+\lambda_k^2}.
\end{split}
\end{equation}
The assertion on $\mathbf{II}$ follows since $C_{ii}=\frac{2\lambda_i}{1+\lambda_i^2}$.  $\mathbf{III}$ is similar.
\end{proof}

Similarly, $\Theta(E_i,E_j,E_k,E_l)$ is a locally defined function around $(x_0,t_0)$.  For notation convenience, we just write
$$\Theta_{ijkl}=\Theta(E_i,E_j,E_k,E_l)$$
defined locally around $(x_0,t_0)$.  The following is a slight modification of \cite[Lemma 3.1]{TsaiTsuiWang2023}.
\begin{lma}\label{l-positivity} For any $(x_0,t_0)\in M\times (0,T]$, under the  graphical frame $\{E_i\}_{i=1}^m$,  if $\Theta_{1221}+\a>0$ at $(x_0,t_0)$ for some $\a\in \mathbb{R}$, then
\bee
\begin{split}
& (\Theta_{1221}+\a)\heat \Theta_{1221}+\frac12 |\nabla \Theta_{1221}|^2\\
\ge&-2\a(\Theta_{1221}+\a)|A|^2+  4\a  \lf(S_{11}\sum_{k=1}^m |A^{1+m}_{1k}|^2+S_{22}\sum_{k=1}^m |A^{2+m}_{2k}|^2\ri)\\
&+(\Theta_{1221}+\a)\lf[(\mathbf{2}) +(\mathbf{3})\ri]
\end{split}
\eee
where $|A|$ is the norm of the second fundamental form and
\bee
 \left\{
   \begin{array}{ll}
(\mathbf{2}) =&\displaystyle C_{11}^2\sum_{k=1}^m \frac{K^g_{1k}-\lambda_k^2 K^h_{1k}}{1+\lambda_k^2}+C_{22}^2\sum_{k=1}^m \frac{K^g_{2k}-\lambda_k^2 K^h_{2k}}{1+\lambda_k^2}\\
(\mathbf{3})=&\displaystyle \frac12 C_{11}^2 \lf(\p_t g_{11}-\p_t h_{11}\ri)+ \frac12 C_{22}^2\lf(\p_t g_{22}-\p_t h_{22}\ri).
   \end{array}
 \right.
\eee
Here $\lambda_k^2 K^h_{ik}$ is understood to be zero if $k>n$.

\end{lma}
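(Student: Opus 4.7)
The plan is to prove the stated inequality by direct computation of $\heat \Theta_{1221}$ and $|\nabla \Theta_{1221}|^2$ at $(x_0,t_0)$ via Lemma~\ref{lma:evo-S}, and then to verify the claim by coefficient-by-coefficient matching, following the blueprint of \cite[Lemma~3.1]{TsaiTsuiWang2023} with the additional evolution pieces accounted for. The first observation is that the Uhlenbeck ODE \eqref{eqn:Uhlenbeck} preserves $\eta(E_A,E_B)=\delta_{AB}$ throughout a spacetime neighborhood of $(x_0,t_0)$, so the Kulkarni--Nomizu expansion of $\Theta=S\owedge\eta$ collapses to the local identity $\Theta_{1221}=S(E_1,E_1)+S(E_2,E_2)$ as functions near $(x_0,t_0)$, regardless of whether $S$ is diagonal off $(x_0,t_0)$. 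Applying Lemma~\ref{lma:evo-S} to $\heat S(E_i,E_i)$ for $i=1,2$ and summing immediately yields
$$\heat \Theta_{1221}=(\mathbf{I}_1+\mathbf{I}_2)+(\mathbf{2})+(\mathbf{3}),$$
where $(\mathbf{2})$ and $(\mathbf{3})$ are precisely the sums of the $\mathbf{II}$ and $\mathbf{III}$ pieces of that lemma over $i=1,2$.

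Second, to compute $\nabla_p\Theta_{1221}$ at $(x_0,t_0)$, we use $\nabla E_i(x_0,t_0)=0$ to reduce the derivative to $(\nabla_p S)(E_1,E_1)+(\nabla_p S)(E_2,E_2)$. Differentiating $S_{ij}=F^\alpha_i F^\beta_j s_{\alpha\beta}$, exploiting that $s=\pi_M^*g-\pi_N^*h$ is $\wt\nabla$-parallel for the product metric on $X$ (so the $\wt\nabla s$ contribution drops out), and observing that the Hessian $F^\alpha_{|ip}$ is purely normal with components $A^{a+m}_{ip}$ in the graphical frame, we obtain
$$\nabla_p\Theta_{1221}=-2C_{11}A^{1+m}_{1p}-2C_{22}A^{2+m}_{2p},$$
and hence
$$\tfrac12|\nabla\Theta_{1221}|^2=2C_{11}^2\sum_p(A^{1+m}_{1p})^2+2C_{22}^2\sum_p(A^{2+m}_{2p})^2+4C_{11}C_{22}\sum_p A^{1+m}_{1p}A^{2+m}_{2p}.$$

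Finally, multiplying $\heat\Theta_{1221}$ by $\Theta_{1221}+\alpha>0$, adding $\tfrac12|\nabla\Theta_{1221}|^2$, and cancelling the $(\Theta_{1221}+\alpha)[(\mathbf{2})+(\mathbf{3})]$ term common to both sides reduces the claim to
$$(\Theta_{1221}+\alpha)(\mathbf{I}_1+\mathbf{I}_2)+\tfrac12|\nabla\Theta_{1221}|^2+2\alpha(\Theta_{1221}+\alpha)|A|^2\geq 4\alpha\Big[S_{11}\textstyle\sum_k(A^{1+m}_{1k})^2+S_{22}\sum_k(A^{2+m}_{2k})^2\Big].$$
We would then expand $\mathbf{I}_1+\mathbf{I}_2=\sum_{a,l}2(S_{11}+S_{aa})(A^{a+m}_{1l})^2+2(S_{22}+S_{aa})(A^{a+m}_{2l})^2$ and verify the resulting inequality one component $(A^{a+m}_{il})^2$ at a time, using $S_{ii}^2+C_{ii}^2=1$ and the ordering $S_{11}\le S_{22}\le\cdots\le S_{mm}$ from \eqref{e-S-min}. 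The main obstacle lies in the principal components $(A^{1+m}_{1p})^2$ and $(A^{2+m}_{2p})^2$ together with the gradient cross term, which combine into a $2\times 2$ quadratic form in $(A^{1+m}_{1p},A^{2+m}_{2p})$; controlling this against the $4\alpha S_{ii}$ contributions on the right-hand side requires a careful completion of the square that exploits $\Theta_{1221}+\alpha>0$ and mirrors the delicate static-case argument in \cite{TsaiTsuiWang2023}.
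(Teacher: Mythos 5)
Your setup coincides with the paper's: the same reduction $\heat\Theta_{1221}=(\mathbf{I}_1+\mathbf{I}_2)+(\mathbf{2})+(\mathbf{3})$ via Lemma~\ref{lma:evo-S}, the same gradient formula $\nabla_p\Theta_{1221}=-2(C_{11}A^{1+m}_{1p}+C_{22}A^{2+m}_{2p})$, and the same final target inequality after cancelling the curvature terms. The off-diagonal second fundamental form components are indeed handled exactly as you suggest, using $S_{ii}+S_{aa}=\Theta_{iaai}\ge\Theta_{1221}$ and then $(\Theta_{1221}+\a)\Theta_{1221}\ge-\a(\Theta_{1221}+\a)$.

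However, the decisive step is precisely the one you flag as ``the main obstacle'' and do not carry out, so as written the proof is incomplete. The missing ingredient is the algebraic identity $2\Theta_{1221}S_{11}=\Theta_{1221}^2+C_{22}^2-C_{11}^2$ (and its counterpart with $1\leftrightarrow 2$), which follows from $S_{ii}^2+C_{ii}^2=1$ and $\Theta_{1221}=S_{11}+S_{22}$. With it, for each fixed $p$ the combination
$4\Theta_{1221}S_{11}(A^{1+m}_{1p})^2+4\Theta_{1221}S_{22}(A^{2+m}_{2p})^2+2(C_{11}A^{1+m}_{1p}+C_{22}A^{2+m}_{2p})^2$
collapses to
$2\Theta_{1221}^2\bigl((A^{1+m}_{1p})^2+(A^{2+m}_{2p})^2\bigr)+2(C_{22}A^{1+m}_{1p}+C_{11}A^{2+m}_{2p})^2\ge 2\Theta_{1221}^2\bigl((A^{1+m}_{1p})^2+(A^{2+m}_{2p})^2\bigr)$,
i.e.\ the cross term is absorbed by \emph{swapping} the coefficients $C_{11},C_{22}$ in the completed square, not by using $\Theta_{1221}+\a>0$. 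That positivity enters only afterwards, in splitting $(\Theta_{1221}+\a)(4S_{11}\sum+4S_{22}\sum)$ into the piece $4\a(S_{11}\sum+S_{22}\sum)$ that appears verbatim on the right-hand side of the lemma and the piece weighted by $\Theta_{1221}$ that feeds the identity above, together with $\Theta_{1221}^2\ge-\a(\Theta_{1221}+\a)$ at the very end. Without this identity the $2\times2$ quadratic form you describe is not manifestly controllable (note $S_{11}$ may be close to $-1$), so you should supply it explicitly rather than defer to \cite{TsaiTsuiWang2023}.
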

\begin{proof} By  In the following, let $E_i$ be the graph frame at a point, which has been extended to a frame using Uhlenbeck's trick, we write $S_{ii}=S(E_i,E_i)$ and $\Theta_{ijji}=\Theta(E_i,E_j,E_j,E_i)$ etc.  By Lemma \ref{lma:evo-S}, at $(x_0,t_0)$ we have
\be\label{eqn:Theta1221}
\begin{split}
  \heat \Theta_{1221}=&
 \heat\lf(S_{11}+S_{22}\ri)\\
 =&(\mathbf{1})+ (\mathbf{2})+(\mathbf{3})\\
 \end{split}
 \ee
 where
\bee
 \left\{
   \begin{array}{ll}
   (\mathbf{1}) =&\displaystyle\sum_{a=1}^n\sum_{l=1}^m 2(S_{11}+S_{aa})|A^{a+m}_{1l}|^2 +\sum_{a=1}^n\sum_{l=1}^m 2(S_{22}+S_{aa})|A^{a+m}_{2l}|^2\\[3mm]
(\mathbf{2}) =&\displaystyle C_{11}^2\sum_{k=1}^m \frac{K^g_{1k}-\lambda_k^2 K^h_{1k}}{1+\lambda_k^2}+C_{22}^2\sum_{k=1}^m \frac{K^g_{2k}-\lambda_k^2 K^h_{2k}}{1+\lambda_k^2}\\[3mm]
(\mathbf{3})=&\displaystyle \frac12 C_{11}^2 \lf(\p_t g_{11}-\p_t h_{11}\ri)+\frac12 C_{22}^2\lf(\p_t g_{22}-\p_t h_{22}\ri).
   \end{array}
 \right.
\eee

\bigskip

Since $S_{ii}+S_{jj}\ge \Theta_{1221}$ for all $1\leq i\neq j\leq m$,  $(\mathbf{1})$ can be estimated as:
\bee
\begin{split}
(\mathbf{1})=&\sum_{1\le k\le m;  a\neq 1, 2}2(S_{11}+S_{aa})|A_{1k}^{a+m}|^2
+  \sum_{k=1}^m 2(S_{11}+S_{22})|A_{1k}^{2+m}|^2
\\
&+\sum_{1\le k\le m; a\neq 1, 2}2(S_{22}+S_{aa})|A_{2k}^{a+m}|^2
+  \sum_{k=1}^m 2(S_{11}+S_{22})|A_{2k}^{1+m}|^2\\
&+4S_{11}\sum_{k=1}^m|A_{1k}^{1+m}|^2+4S_{22}\sum_{k=1}^m|A_{2k}^{2+m}|^2\\
\ge  &2\Theta_{1221}\lf( \sum_{1\le k\le m; a\neq 1, 2}(|A_{1k}^{a+m}|^2+|A_{2k}^{a+m}|^2)
 +\sum_{k=1}^m (|A_{1k}^{2+m}|^2+ |A_{2k}^{1+m}|^2)\ri)\\
&+4S_{11}\sum_{k=1}^m|A_{1k}^{1+m}|^2+4S_{22}\sum_{k=1}^m|A_{2k}^{2+m}|^2.
\end{split}
\eee

On the other hand, for each $1\leq k\leq m$
\bee
\begin{split}
\nabla_{E_k} \Theta(E_1,E_2,E_2,E_1)=&-4\lf(\frac{A^{1+m}_{1k}\lambda_1}{1+\lambda_1^2}
+\frac{A^{2+m}_{2k}\lambda_2}{1+\lambda_2^2}\ri)\\
=&-2(C_{11}A^{1+m}_{1k}+C_{22}A^{2+m}_{2k})
\end{split}
\eee
so that
\bee
|\nabla \Theta_{1221}|^2=4\sum_{k=1}^m(C_{11}A^1_{1k}+C_{22}A^2_{2k})^2.
\eee

On the other hand, by the proof of \cite[Lemma 3.1]{TsaiTsuiWang2023},
$$
2\Theta_{ijji}S_{ii}=\Theta_{ijji}^2+C_{jj}^2-C_{ii}^2
$$
for all $i\neq j$. In fact,
\bee
\begin{split}
\Theta_{ijji}^2+C_{jj}^2-C_{ii}^2=&\Theta_{ijji}^2
+\frac{2(\lambda_j^2-\lambda_i^2)\Theta_{ijji}}{(1+\lambda_i^2)(1+\lambda_j^2)}\\
=&\Theta_{ijji}\cdot \frac{2(1-\lambda_i^2\lambda_j^2+\lambda_j^2-\lambda_i^2)} {(1+\lambda_i^2)(1+\lambda_j^2)}\\
=&2\Theta_{ijji}S_{ii}.
\end{split}
\eee
Hence we have
\bee
\begin{split}
&  \sum_{k=1}^m \Theta_{1221}\lf(4S_{11}(A_{1k}^{1+m})^2+4S_{22}(A_{2k}^{2+m})^2\ri)
 +\frac12|\nabla\Theta_{1221}|^2\\
=&2\sum_{k=1}^m |A^{1+m}_{1k}|^2\lf(\Theta_{1221}^2+C_{22}^2-C_{11}^2\ri)\\
&+2\sum_{k=1}^m |A^{2+m}_{2k}|^2\lf(\Theta_{1221}^2+C_{11}^2-C_{22}^2\ri)
+2\sum_{k=1}^m(C_{11}A^{1+m}_{1k}+C_{22}
A^{2+m}_{2k})^2\\
= &2\Theta_{1221}^2\sum_{k=1}^m\lf(|A^{1+m}_{1k}|^2+|A^{2+m}_{2k}|^2\ri)
+2\sum_{k=1}^m(C_{22}A^{1+m}_{1k}+C_{11}A^{2+m}_{2k})^2\\
\ge&2\Theta_{1221}^2\sum_{k=1}^m\lf(|A^{1+m}_{1k}|^2+|A^{2+m}_{2k}|^2\ri).
\end{split}
\eee

Therefore,
\bee
\begin{split}
&\quad  (\Theta_{1221}+\a) (\mathbf{1})+\frac12 |\nabla \Theta_{1221}|^2\\
&\geq  2\Theta_{1221} (\Theta_{1221}+\a) \lf(\sum_{1\le k\le m; a\neq 1, 2}(|A_{1k}^{a+m}|^2+|A_{2k}^{a+m}|^2)
 +\sum_{k=1}^m (|A_{1k}^{2+m}|^2+ |A_{2k}^{1+m}|^2)\ri)\\
&+(\Theta_{1221}+\a)\left[4S_{11}\sum_{k=1}^m(A_{1k}^{1+m})^2+4S_{22}\sum_{k=1}^m(A_{2k}^{2+m})^2\right]+\frac12 |\nabla \Theta_{1221}|^2\\
&\ge 2\Theta_{1221} (\Theta_{1221}+\a) \lf(\sum_{1\le k\le m; a\neq 1, 2}(|A_{1k}^{a+m}|^2+|A_{2k}^{a+m}|^2)
 +\sum_{k=1}^m (|A_{1k}^{2+m}|^2+ |A_{2k}^{1+m}|^2)\ri)\\
 &+2\Theta_{1221}^2\sum_{k=1}^m\lf(|A^{1+m}_{1k}|^2+|A^{2+m}_{2k}|^2\ri)
   +4\a \left[S_{11}\sum_{k=1}^m(A_{1k}^{1+m})^2+S_{22}\sum_{k=1}^m(A_{2k}^{2+m})^2\right]\\
&\geq -2\a (\Theta_{1221}+\a) \sum_{k=1}^m\sum_{a=1}^n(|A_{1k}^{a+m}|^2+|A_{2k}^{a+m}|^2)\\
&\quad +4\a \left[S_{11}\sum_{k=1}^m(A_{1k}^{1+m})^2+S_{22}\sum_{k=1}^m(A_{2k}^{2+m})^2\right].
\end{split}
\eee
because $(\Theta_{1221}+\a)\Theta_{1221}\ge -\a(\Theta_{1221}+\a)$. Putting this back to \eqref{eqn:Theta1221} yields the result.
\end{proof}
In case $\Theta>0$, we have the following which is obtained in \cite{TsaiTsuiWang2023} and has been used in the proof of long time existence in \S2.3.

\begin{cor}\label{c-det}
As in Lemma \ref{l-positivity}, if $\Theta>0$, then

\bee
\heat \log\lf(\frac{\det\Theta}{\det(\frac12\eta\owedge\eta)}\ri)\ge a|A|^2-C
\eee
for { some $a>0$ and} some constant $C$ depending only on the bounds of the curvatures of $g(t), h(t)$, the bounds of $\p_t g, \p_t h$,  $m, n$ and  the positive lower bound of $\Theta$. 
\end{cor}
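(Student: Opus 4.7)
The plan is to compute $\heat$ of the scalar $\log\bigl(\det\Theta/\det(\tfrac12\eta\owedge\eta)\bigr)$ pointwise at a fixed $(x_0,t_0)\in M\times(0,T]$, working in the graphical frame of \S\ref{subsec:graph-geom} extended by the Uhlenbeck-type ODE \eqref{eqn:Uhlenbeck} and parallel transport, exactly as in the proof of Lemma \ref{l-positivity}. At $(x_0,t_0)$ the tensor $\Theta$ is diagonalized on $\Lambda^2T^*M$ with respect to $\tfrac12\eta\owedge\eta$, with eigenvalues $\Theta_{ijji}$ for $1\le i<j\le m$. Since the quantity is a globally defined scalar, the log-determinant identity for a positive symmetric endomorphism $T$,
\begin{equation*}
\heat\log\det T \;=\; \mathrm{tr}\bigl(T^{-1}\heat T\bigr) + \bigl|T^{-1/2}(\nabla T)T^{-1/2}\bigr|^{2},
\end{equation*}
evaluated at this diagonal point and then discarding the nonnegative off-diagonal gradient contribution, gives
\begin{equation*}
\heat\log\!\left(\frac{\det\Theta}{\det(\tfrac12\eta\owedge\eta)}\right)\bigg|_{(x_0,t_0)} \;\ge\; \sum_{1\le i<j\le m}\heat\log\Theta_{ijji}\bigg|_{(x_0,t_0)}.
\end{equation*}

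To estimate each $\heat\log\Theta_{ijji}$, I retrace the proof of Lemma \ref{l-positivity} with $\alpha=0$ and the relabelling $(1,2)\mapsto(i,j)$. The statement of that lemma degenerates at $\alpha=0$, but the proof only becomes lossy at the final application of $(\Theta_{1221}+\alpha)\Theta_{1221}\ge-\alpha(\Theta_{1221}+\alpha)$, and stopping one step earlier preserves the $|A|^2$ information. This produces
\begin{equation*}
\Theta_{ijji}\,\heat\Theta_{ijji} + \tfrac12|\nabla\Theta_{ijji}|^{2} \;\ge\; 2\Theta_{ijji}^{2}\sum_{k,a}\bigl(|A^{a+m}_{ik}|^{2}+|A^{a+m}_{jk}|^{2}\bigr) + \Theta_{ijji}\bigl[(\mathbf{2})_{ij}+(\mathbf{3})_{ij}\bigr],
\end{equation*}
with $(\mathbf{2})_{ij},(\mathbf{3})_{ij}$ the evident analogues for the pair $(i,j)$. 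Dividing by $\Theta_{ijji}^{2}$ and using $\heat\log f = (\heat f)/f + |\nabla f|^{2}/f^{2}$ yields
\begin{equation*}
\heat\log\Theta_{ijji} \;\ge\; 2\sum_{k,a}\bigl(|A^{a+m}_{ik}|^{2}+|A^{a+m}_{jk}|^{2}\bigr) + \frac{(\mathbf{2})_{ij}+(\mathbf{3})_{ij}}{\Theta_{ijji}} + \frac{|\nabla\Theta_{ijji}|^{2}}{2\Theta_{ijji}^{2}}.
\end{equation*}
The positive lower bound $\Theta\ge\delta$ forces $\lambda_{i}\le 2/\delta$ via \eqref{eqn:graph-frame-1} and $1/\Theta_{ijji}\le 1/\delta$, so $(\mathbf{2})_{ij}$ and $(\mathbf{3})_{ij}/\Theta_{ijji}$ are uniformly bounded in terms of the data listed in the statement, and the last gradient term is nonnegative and discarded.

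Summing over $1\le i<j\le m$ and noting that each index $p\in\{1,\ldots,m\}$ belongs to exactly $m-1$ ordered pairs yields the combinatorial identity
\begin{equation*}
\sum_{1\le i<j\le m}\sum_{k,a}\bigl(|A^{a+m}_{ik}|^{2}+|A^{a+m}_{jk}|^{2}\bigr) \;=\; (m-1)\,|A|^{2},
\end{equation*}
so the constant $a=2(m-1)>0$ emerges and all remaining contributions are absorbed into $-C$. The only residual technical point is essentially cosmetic: in the log-determinant identity the reference form $\tfrac12\eta\owedge\eta$ itself evolves with $t$, producing an auxiliary term $\heat\log\det(\tfrac12\eta\owedge\eta)$; since $\eta=F_{t}^{*}G(t)$ has bounded geometry once the $\lambda_{i}$ are bounded and $G(t)$ varies smoothly, this term is uniformly controlled and is absorbed into $C$ as well.
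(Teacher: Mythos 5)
Your proposal is correct and is essentially the paper's own argument: the paper proves Corollary \ref{c-det} precisely by taking $\a=0$ in the proof of Lemma \ref{l-positivity} and stopping before the final inequality $(\Theta_{1221}+\a)\Theta_{1221}\ge-\a(\Theta_{1221}+\a)$, exactly as you do for each pair $(i,j)$. You merely spell out the details the paper leaves implicit (the log-determinant identity with the nonnegative off-diagonal gradient terms discarded, the bound on $(\mathbf{2})_{ij},(\mathbf{3})_{ij}$ via $\lambda_i\le 2/\delta$, and the summation over eigenvalue pairs giving $a=2(m-1)$); note also that in the Uhlenbeck frame the reference form $\frac12\eta\owedge\eta$ has constant components, so the ``auxiliary term'' you mention at the end does not actually arise.
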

\begin{proof} Since $\Theta>0$, we can take $\a=0$ in the proof of Lemma \ref{l-positivity} before applying $(\Theta_{1221}+\a)\Theta_{1221}\ge -\a(\Theta_{1221}+\a)$. The result follows.
\end{proof}

\subsection{Monotonicity in static background}

In this subsection, we consider the case when $G(t)$ is static in $t$ and will prove the preservation of area non-increasing under various curvature conditions. Let $(M^m,g), (N^n,h)$ be smooth compact manifolds. Let $\ell=\min\{m,n\}\ge 2$.
\vskip .2cm

({\bf A}): $g(t)=g$ and  $h(t)=h$ are time-independent satisfying
\be\label{e-A}
\left\{
\begin{array}{ll}
 \Ric^g_{\min}-\Ric_{\max}^h+(m-\ell)\cdot \kappa_M+(n-\ell)\cdot \kappa_N\ge0, \;\;{\text and\ }\\[3mm]
 \kappa_M+\kappa_N\ge 0
\end{array}
\right.
\ee
\vskip .2cm

({\bf B}): $g(t)=g$ and  $h(t)=h$ are time-independent satisfying
\be\label{e-B}
\kappa_M\ge 0\ \text{and}\ \tau_N\le \frac{2(m-\ell)+\ell-1}{\ell-1}\kappa_M
\ee
  Here  $\kappa_M$  is the lower bounds of the sectional curvature of $g$ and $\tau_N$ is the upper bound of the sectional  curvature of $h$.
\vskip .2cm

We want to estimate $(\mathbf{2})+(\mathbf{3})$ in Lemma~\ref{l-positivity} under the above conditions. Note that $(\textbf{3})$ is always zero in static case.
\begin{lma}\label{l-R-1}  With the same assumptions and notations as in Lemma \ref{l-positivity}, we have the following:
\begin{enumerate}
  \item [(i)] Under the condition ({\bf A}), we have
\bee
\begin{split}
(\mathbf{2})+(\mathbf{3})\ge & \frac12 \sum_{p=3}^\ell\lf[ C_{11}^2(K_{1p}^g+K_{1p}^h)+
 C_{22}^2(K_{2p}^g+K_{2p}^h)\ri]S_{pp}
\\&+(K^g_{12}+K^h_{12}) \frac{ (\lambda_1^2+\lambda_2^2) }{ (1+\lambda_1^2) (1+\lambda_2^2)} \Theta_{1221}.
\end{split}
\eee
  \item[(ii)] Under condition ({\bf B}), we have
  \bee
 (\mathbf{2})+(\mathbf{3})\ge(\kappa_M+\tau_N)\lf[\frac12\lf( C_{11}^2+C_{22}^2\ri)\sum_{p=3}^\ell S_{pp}+  \frac{\lambda_1^2+\lambda_2^2}{(1+\lambda_1^2)(1+\lambda_2^2)}\Theta_{1221}\ri].
  \eee
\end{enumerate}
\end{lma}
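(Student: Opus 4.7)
Since $g(t),h(t)$ are time-independent, $(\mathbf{3})$ vanishes identically and the task reduces to bounding $(\mathbf{2})$ from below. My plan centers on the algebraic identity
\[
\frac{K^g_{1p}-\lambda_p^2 K^h_{1p}}{1+\lambda_p^2} = \tfrac{1}{2}(K^g_{1p}-K^h_{1p}) + \tfrac{S_{pp}}{2}(K^g_{1p}+K^h_{1p}),
\]
which is an immediate consequence of $\tfrac{1}{1+\lambda_p^2}=\tfrac{1+S_{pp}}{2}$ and $\tfrac{\lambda_p^2}{1+\lambda_p^2}=\tfrac{1-S_{pp}}{2}$. Substituting into $(\mathbf{2})$ will split each summand into an $S_{pp}$-piece that matches the target sum and a $(K^g-K^h)$-piece to be dealt with as a leftover.

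For (i), I would first isolate the $p\in\{1,2\}$ contribution. Combining the identity with the algebraic fact
\[
C_{11}^2 S_{22}+C_{22}^2 S_{11} = 2\,\tfrac{\lambda_1^2+\lambda_2^2}{(1+\lambda_1^2)(1+\lambda_2^2)}\,\Theta_{1221},
\]
this contribution equals $(K^g_{12}+K^h_{12})\tfrac{\lambda_1^2+\lambda_2^2}{(1+\lambda_1^2)(1+\lambda_2^2)}\Theta_{1221}+\tfrac{C_{11}^2+C_{22}^2}{2}(K^g_{12}-K^h_{12})$, whose first summand is exactly the ``$12$''-term of the target. For $p\in\{3,\dots,\ell\}$ the $S_{pp}$-parts reproduce the target sum, while for $p\in\{\ell+1,\dots,m\}$ (nontrivial only when $\ell=n<m$, where $K^h_{1p}$ is undefined) the full contribution $C_{11}^2 K^g_{1p}+C_{22}^2 K^g_{2p}$ flows into the leftover. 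Telescoping all leftover pieces through $\Ric^g_{11}=\sum_{p\geq 2}K^g_{1p}$ will produce, for $i=1$, the expression
\[
\tfrac{C_{11}^2}{2}\Bigl(\Ric^g_{11}+\sum_{p=\ell+1}^m K^g_{1p}-\sum_{p=2}^\ell K^h_{1p}\Bigr),
\]
together with an analogous piece for $i=2$. To finish, I will split into cases: when $\ell=n$, $\sum_{p=\ell+1}^m K^g_{1p}\geq(m-\ell)\kappa_M$ and $\sum_{p=2}^\ell K^h_{1p}=\Ric^h_{11}$; when $\ell=m<n$, the sum $\sum_{p=\ell+1}^m K^g_{1p}$ is empty and $\sum_{p=2}^m K^h_{1p}=\Ric^h_{11}-\sum_{p=m+1}^n K^h_{1p}$ exposes a tail $\geq(n-\ell)\kappa_N$. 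In either case, condition $({\bf A})$, read as $\Ric^g_{\min}-\Ric^h_{\max}+(m-\ell)\kappa_M+(n-\ell)\kappa_N\geq 0$, makes the leftover non-negative.

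For (ii), the upper bound $\tau_N$ on $K^h$ allows the pointwise replacement $\tfrac{K^g_{1p}-\lambda_p^2 K^h_{1p}}{1+\lambda_p^2}\geq\tfrac{\kappa_M-\lambda_p^2\tau_N}{1+\lambda_p^2}$ whenever both $K^g_{1p},K^h_{1p}$ are genuine sectional curvatures. Summing, applying the same identity with $(\kappa_M,\tau_N)$ in place of $(K^g,K^h)$, and subtracting the overcounted $p=1,2$ terms (where the original summand vanishes since $K^g_{ii}=K^h_{ii}=0$) will yield a bound of the form
\[
(\mathbf{2})\geq\tfrac{(m-1)(\kappa_M-\tau_N)}{2}(C_{11}^2+C_{22}^2)+(\kappa_M+\tau_N)\tfrac{\lambda_1^2+\lambda_2^2}{(1+\lambda_1^2)(1+\lambda_2^2)}\Theta_{1221}+\tfrac{(\kappa_M+\tau_N)(C_{11}^2+C_{22}^2)}{2}\sum_{p=3}^m S_{pp}.
\]
Using $S_{pp}=1$ for $p>n$ to rewrite $\sum_{p=3}^m S_{pp}=\sum_{p=3}^\ell S_{pp}+(m-\ell)$, the difference from the target collapses to $\tfrac{C_{11}^2+C_{22}^2}{2}\bigl[\kappa_M(2m-\ell-1)-\tau_N(\ell-1)\bigr]$, which is non-negative exactly under hypothesis $({\bf B})$.

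The hardest bookkeeping will be the $\ell=m<n$ subcase of (i): one must resist prematurely collapsing $\sum_{p=2}^m K^h_{1p}$ into $\Ric^h_{11}$, since the ``missing tail'' $\sum_{p=m+1}^n K^h_{1p}\geq(n-m)\kappa_N$ is exactly the ingredient that condition $({\bf A})$ is tuned to supply.
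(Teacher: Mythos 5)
Your proposal is correct and follows essentially the same route as the paper's proof: the identity $\frac{2}{1+\lambda_p^2}=1+S_{pp}$, $\frac{2\lambda_p^2}{1+\lambda_p^2}=1-S_{pp}$ to split each summand into an $S_{pp}$-weighted part and a residual, the residual absorbed by condition ({\bf A}) via $\Ric^g_{11}-\Ric^h_{11}$ plus the tail terms $(m-\ell)\kappa_M$, $(n-\ell)\kappa_N$ (resp.\ by the constant $(2m-\ell-1)\kappa_M-(\ell-1)\tau_N\ge 0$ under ({\bf B})), and the identity $C_{11}^2S_{22}+C_{22}^2S_{11}=\frac{2(\lambda_1^2+\lambda_2^2)}{(1+\lambda_1^2)(1+\lambda_2^2)}\Theta_{1221}$ to produce the $\Theta_{1221}$ term. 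The bookkeeping you flag (the $\ell=m<n$ versus $\ell=n<m$ tails, and the rewriting $\sum_{p=3}^mS_{pp}=\sum_{p=3}^\ell S_{pp}+(m-\ell)$) matches the paper's computation, so no gap remains.
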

\begin{proof}
(i) Assume ({\bf A}). Then $({\bf 3})=0$ and
\bee
\begin{split}
(\mathbf{2}) =&C_{11}^2\sum_{p=1}^m \frac{K^g_{1p}-\lambda_p^2 K^h_{1p}}{1+\lambda_p^2}+C_{22}^2\sum_{p=1}^m \frac{K^g_{2p}-\lambda_p^2 K^h_{2p}}{1+\lambda_p^2}
\end{split}
\eee
where
\bee
\begin{split}
& 2\sum_{p=1}^m \frac{K^g_{1p}-\lambda_p^2 K^h_{1p}}{1+\lambda_p^2}\\
=&\sum_{p=1}^m K^g_{1p}(1+S_{pp}) -\sum_{p=1}^\ell K^h_{1p}(1-S_{pp})\\
{\ge} &\Ric^g_{11}-\Ric^h_{11}+\sum_{p=2}^\ell (K^g_{1p}+K^h_{1p})S_{pp}
+(m-\ell)\kappa_M+(n-\ell)\kappa_N\\
\ge & \sum_{p=3}^\ell(K^g_{1p}+K^h_{1p})S_{pp}+(K^g_{12}+K^h_{12})S_{22}.
\end{split}
\eee

Similarly,
\bee
\begin{split}
2\sum_{p=1}^m \frac{K^g_{2p}-\lambda_p^2 K^h_{2p}}{1+\lambda_p^2}
\ge & \sum_{p=3}^\ell(K^g_{2p}+K^h_{2p})S_{pp}+(K^g_{12}+K^h_{12})S_{11}.
\end{split}
\eee
On the other hand, since
 \bee
 \begin{split}
 C_{11}^2S_{22}+C_{22}^2S_{11}
 =&\frac{2 (\lambda_1^2+\lambda_2^2) }{ (1+\lambda_1^2) (1+\lambda_2^2)}\Theta_{1221},
 \end{split}
 \eee
we conclude that (i) is true.

\vskip0.3cm

(ii) Assume ({\bf B}). Observe that $\tau_N$ can be assumed to be nonnegative.
In this case,
\bee
\begin{split}
2\sum_{p=1}^m \frac{K^g_{1p}-\lambda_p^2 K^h_{1p}}{1+\lambda_p^2}
\ge & \sum_{p=2}^m\frac{2\kappa_M}{1+\lambda_p^2}-\tau_N\sum_{p=2}^n \frac{2\lambda_p^2}{1+\lambda_p^2}\\
=&\sum_{p=2}^m(1+S_{pp})\kappa_M-\tau_N\sum_{p=2}^\ell (1-S_{pp}) \\
=&\sum_{p=2}^\ell (\kappa_M+\tau_N)S_{pp}+(\ell-1)(\kappa_M -\tau_N)+2(m-\ell) \kappa_M\\
\ge&\sum_{p=3}^\ell (\kappa_M+\tau_N)S_{pp}+(\kappa_M+\tau_N)S_{22}
\end{split}
\eee
Similarly,
\bee
\begin{split}
2\sum_{p=1}^m \frac{K^g_{2p}-\lambda_p^2 K^h_{2p}}{1+\lambda_p^2}\ge &\sum_{p=3}^\ell (\kappa_M+\tau_N)S_{pp}+
 (\kappa_M+\tau_N)S_{11}
\end{split}
\eee
so that
\bee
\begin{split}
(\mathbf{2})\ge &\frac12\lf(C_{11}^2+C_{22}^2\ri)\sum_{p=3}^\ell (\kappa_M+\tau_N)S_{pp}\\
 &+\frac12(\kappa_M+\tau_N)\lf(C_{11}^2 S_{22}+C_{22}^2S_{11}\ri)\\
=&(\kappa_M+\tau_N)\lf[\frac12\lf( C_{11}^2+C_{22}^2\ri)\sum_{p=3}^\ell S_{pp}+  \frac{\lambda_1^2+\lambda_2^2}{(1+\lambda_1^2)(1+\lambda_2^2)}\Theta_{1221}\ri].
\end{split}
\eee
As before, one can conclude that (ii) is true.

\end{proof}

Recall that for  $t\ge 0$,
\bee
\mathfrak{m}(t)=\inf_{x\in M}\{\text{smallest eigenvalue of $\Theta(x,t)$}\}.
\eee
Now we are ready to prove the Case {\bf (A)} and {\bf (B)} in our main Theorem.
\begin{thm}\label{t-TTW-2}
 Let $(M^m, g), (N^n,h)$ be two compact manifolds. Suppose $f_0$ is a smooth map from $M$ to $N$. Let $F: M\to (M\times N, g\oplus h)$ be a smooth mean curvature flow defined on $M\times[0,T)$ with $0<T\leq +\infty$ and initial map $F_0=\mathrm{Id} \times f_0$. Moreover, assume $F$ is a graph given by a map $f_t:M\to N$ for all $t\in [0,T)$. Assume that $f_0:M\to N$ is area non-increasing and  one of the condition  ({\bf A}) or ({\bf B}) holds, then
 \begin{enumerate}
   \item [(i)] $f_t$ is area non-increasing for $t\in [0,T)$;
   \item [(ii)] If $f_0$ is strictly area decreasing at a point,  then $ \mathfrak{m}(t)>0$ for $t>0$ and is nondecreasing in $t$ where $\mathfrak{m}(t)$ is defined in \eqref{e-mt}. In particular, $F$ has long time solution. Moreover, if $\kappa_M+\kappa_N>0$ in condition ({\bf A}) or $\kappa_M >0$ in condition ({\bf B}), then $f_0$ is homotopically trivial.
\item[(iii)] If   $m, n\ge 3$, and $\kappa_M+\kappa_N>0$ in condition ({\bf A}) or $\kappa_M >0$ in condition ({\bf B}), then either $f_0$ homotopically trivial  or  $f_0$ is a  Riemanian submersion (if $m>n$), local isometry (if $m=n$),  isometric immersion (if $m<n$).
 \end{enumerate}
\end{thm}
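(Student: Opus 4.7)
The plan is to combine Lemma \ref{l-positivity} with Lemma \ref{l-R-1} to obtain a single scalar differential inequality for the smallest eigenvalue $\Theta_{1221}$ of the tensor $\Theta$ (taken in the graphical frame that diagonalizes $S$ with $\lambda_1\geq\lambda_2\geq\cdots\geq\lambda_m\geq 0$), and then to run (strong) maximum principle arguments on that inequality. A useful preliminary observation is that whenever $\Theta\geq 0$ pointwise, then $S_{pp}\geq 0$ for every $p\geq 3$, since $2S_{pp}\geq S_{pp}+S_{22}=\Theta_{2pp2}\geq \Theta_{1221}\geq 0$. Consequently, under condition \textbf{(A)} or \textbf{(B)}, Lemma \ref{l-R-1} produces $(\mathbf{2})+(\mathbf{3})\geq c\cdot\Theta_{1221}$ for a bounded $c$, plus an additional nonnegative contribution (that becomes strictly positive under the strict sign hypotheses, coming from the $S_{pp}$ terms and the $\kappa_M+\kappa_N$ or $\kappa_M$ factor). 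Feeding this into Lemma \ref{l-positivity} and sending $\alpha\to 0^+$ yields a scalar parabolic inequality
\begin{equation*}
\Theta_{1221}\cdot\heat \Theta_{1221}+\tfrac{1}{2}|\nabla\Theta_{1221}|^2\geq \Theta_{1221}\cdot c\cdot \Theta_{1221}+(\text{nonneg. terms}),
\end{equation*}
and, with the $\alpha$-perturbation, a linear parabolic inequality suitable for the strong maximum principle.

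For part (i), a standard $\alpha$-perturbation argument (consider $\Theta_{1221}+\alpha e^{\Lambda t}$, apply scalar MP at the first interior space-time point where it would touch $0$, then let $\alpha\to 0^+$, combined with the graphical-frame realization of the smallest eigenvalue) forces $\mathfrak{m}(t)\geq \mathfrak{m}(0)\geq 0$. For part (ii), if $\Theta(x_0,0)>0$ somewhere, the strong parabolic maximum principle applied to the displayed linear inequality forces $\Theta_{1221}(\cdot,t)>0$ on all of $M$ for every $t>0$, and an envelope-of-minima argument gives the monotonicity of $\mathfrak{m}(t)$; long-time existence is then an immediate consequence of Theorem \ref{thm:long-time-MCF-RF}. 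For the homotopy-trivial conclusion under the strict sign hypothesis, the additional strictly positive contribution in Lemma \ref{l-R-1} forces $\mathfrak{m}(t)$ to increase at a definite rate; following the Tsui--Wang scheme, this drives all singular values $\lambda_i\to 0$ uniformly and gives smooth convergence of $f_t$ to a constant map, so $f_0$ is null-homotopic.

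For part (iii), I would dichotomize. If $f_0$ is strictly area decreasing at some point, part (ii) yields the homotopy-trivial alternative. Otherwise $\mathfrak{m}(0)=0$ is attained everywhere, so by part (i) the inequality above saturates along a space-time open set; the strong maximum principle for the scalar function $\Theta_{1221}$ then forces $\Theta_{1221}\equiv 0$ on $M\times[0,T)$, equivalently $\lambda_1\lambda_2\equiv 1$. The equality case of the chain of inequalities used to prove Lemmas \ref{l-positivity} and \ref{l-R-1} then forces every nonnegative summand on the right-hand side to vanish identically: $\nabla\Theta_{1221}\equiv 0$; the mixed second-fundamental-form components $A^{a+m}_{1k}$ and $A^{a+m}_{2k}$ for $a\notin\{1,2\}$, and $A^{2+m}_{1k}$ and $A^{1+m}_{2k}$, all vanish; and under the strict sign hypothesis the curvature terms involving $S_{pp}$ for $3\leq p\leq\ell$ force $S_{pp}\equiv 0$, i.e. $\lambda_p=1$. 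Here the hypothesis $m,n\geq 3$ is used to guarantee the existence of such an index $p$. Combining $\lambda_p=1$ for $3\leq p\leq\ell$ with the area non-increasing condition $\lambda_1\lambda_p\leq 1$ yields $\lambda_1\leq 1$, and together with $\lambda_1\geq\lambda_2$ and $\lambda_1\lambda_2=1$ this gives $\lambda_1=\lambda_2=1$. Reading off the cases by dimension then produces the three rigidity conclusions: a Riemannian submersion if $m>n$, a local isometry if $m=n$, an isometric immersion if $m<n$.

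The main obstacle I expect is this last equality-case analysis. While the scalar strong maximum principle gives $\Theta_{1221}\equiv 0$ cleanly, extracting rigidity requires chasing equality back through several inequalities: the Kato-type gradient manipulation in Lemma \ref{l-positivity}, the second-fundamental-form inequalities involving sums of $|A^{a+m}_{ik}|^2$, and the ordering $S_{11}\leq\cdots\leq S_{mm}$ that is used to control the curvature terms in Lemma \ref{l-R-1}. In contrast, parts (i) and (ii) are comparatively routine once the algebraic estimates in those two lemmas are in hand.
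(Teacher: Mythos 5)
Your treatment of parts (i) and (ii) runs along essentially the same lines as the paper's (perturbation argument to preserve $\Theta\ge 0$, strong maximum principle to get strict positivity, $e^{-at}\Theta$-style argument to force $\mathfrak m_\infty=2$ and hence homotopy triviality), though you gloss over the technical point that $\Theta_{1221}$ is only an eigenvalue function and hence not smooth: the paper handles this by coupling with a genuinely smooth auxiliary function $\varphi$ solving the scalar heat equation and perturbing $\Theta$ by $\tilde\varphi=\delta e^{-Lt}\varphi^2-\e e^{L(t-T)}$, so that the strong maximum principle is applied to $\varphi$ rather than to $\Theta_{1221}$ directly.

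Part (iii) has a genuine gap. You argue that if $f_0$ is not strictly area decreasing at any point, so that $\Theta_{1221}(\cdot,0)\equiv 0$, then ``the strong maximum principle forces $\Theta_{1221}\equiv 0$ on $M\times[0,T)$.'' This is not a valid application of the strong maximum principle. For a nonnegative supersolution, a zero at time $t_0>0$ propagates \emph{backward} in time (to $t\le t_0$), not forward; knowing $\Theta_{1221}(\cdot,0)\equiv 0$ does not prevent $\Theta_{1221}$ from becoming strictly positive for $t>0$. (A nonnegative supersolution with zero initial data need not stay zero; only a genuine solution with zero initial data would.) Moreover the supersolution inequality you would need to feed into a scalar SMP is the degenerate one $(\Theta_{1221}+\a)\heat\Theta_{1221}+\frac12|\nabla\Theta_{1221}|^2\ge\cdots$ from Lemma \ref{l-positivity}, and $\Theta_{1221}$ is not smooth, so even if the direction of time were right this step would need a viscosity-type justification. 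The paper avoids all of this: the dichotomy is run for each fixed $t>0$, not at $t=0$. Either there is some $(x,t)$ with $t>0$ and $\Theta(x,t)>0$ --- in which case $f_t$ is strictly area decreasing somewhere and restarting the flow at time $t$ and invoking (ii) shows $f_t$ (and hence $f_0$) is homotopically trivial --- or else $\Theta_{1221}(\cdot,t)\equiv 0$ for all $t>0$, and then the \emph{pointwise} rigidity claim (proved by the $\a\to 0$ limit of Lemma \ref{l-positivity} at an individual zero of $\Theta_{1221}$, using $\nabla\Theta_{1221}=0$ there) gives $\lambda_i\equiv 1$ for $i\le\ell$ at every time $t>0$, and one passes to the limit $t\to 0$. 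Your equality-case analysis is essentially this pointwise claim; the fix is simply to apply it pointwise at zeros of $\Theta_{1221}$ within the correct dichotomy, rather than trying to first upgrade $\Theta_{1221}(\cdot,0)\equiv 0$ to $\Theta_{1221}\equiv 0$ by SMP.

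A secondary inaccuracy: in your equality analysis you assert that the mixed second-fundamental-form components $A^{a+m}_{1k},A^{a+m}_{2k}$ for $a\notin\{1,2\}$ and the cross terms $A^{2+m}_{1k},A^{1+m}_{2k}$ all vanish. The paper does not establish (or need) this. Those terms enter $(\mathbf{1})$ with a factor $\Theta_{1221}$ which is already zero, so they simply drop out. What the vanishing gradient condition $\nabla\Theta_{1221}=0$ actually gives (using $\lambda_1\lambda_2=1$, hence $C_{11}=C_{22}$) is the relation $A^{1+m}_{1k}=-A^{2+m}_{2k}$, which makes the remaining quantity $S_{11}\sum_k(A^{1+m}_{1k})^2+S_{22}\sum_k(A^{2+m}_{2k})^2=S_{11}\sum_k\bigl((A^{1+m}_{1k})^2-(A^{2+m}_{2k})^2\bigr)$ vanish by a cancellation, not because each summand is zero. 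This does not affect the final conclusion that $S_{pp}=0$ for $3\le p\le\ell$, but it is worth getting the mechanism right.
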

Before we prove the theorem, we want to point out that the results on $f_0$ being homotopically trivial under the assumption that $f_0$ is strictly area decreasing have been obtained by Lee-Lee \cite{LeeLee2011} and Tsai-Tsui-Wang \cite{TsaiTsuiWang2023}. We just slightly generalize their results to assume that $f_0$  is strictly area decreasing at a point.

\begin{proof}[Proof of Theorem \ref{t-TTW-2}] We only prove the case when $(M,g), (N,h)$ satisfy condition ({\bf A}). The proof is similar if they satisfy condition ({\bf B}).  We may assume that $F$ is smooth on $M\times[0,T]$. To prove (i) and (ii), let $0\le \varphi_0\le 1$ be a smooth function on $M$ and let $\varphi$ be the solution of the heat equation:
\be\label{e-phi}
\left\{
  \begin{array}{ll}
\displaystyle \heat \varphi=0, & \hbox{on $M\times[0,T]$;}\\[3mm]
\varphi=\varphi_0& \hbox{at $t=0$.}
  \end{array}
\right.
\ee
By the maximum principle and the strong maximum principle, we have $1\ge\varphi\ge0$ and $  \varphi>0$ for $t>0$ if $\varphi_0$ is positive somewhere. Consider the following perturbation of $\Theta$:
\begin{equation}\label{eqn:perturbing-Theta}
\tilde\Theta=\Theta -  \frac12 \wt\varphi\cdot \eta\owedge \eta
\end{equation}
where $\e,L,\delta>0$, with $0<\e<1, 0<\delta<1$ and
\be\label{e-wtphi}
\wt \varphi=  \delta  e^{-Lt}\varphi^2-\e e^{L(t-T)}.
\ee
where $L>0$ is to be chosen.  Define
\be
\wt{\mathfrak{m}}(t)=\inf_{x\in M}\{\text{smallest eigenvalue of $\wt\Theta(x,t)$}\}.
\ee
Suppose $\wt{\mathfrak{m}}(0)>0$ and suppose $\wt{\mathfrak{m}}(t)<0$ for some $t>0$. Then there is $0<t_0\le T$ such that $\wt{\mathfrak{m}}(t_0)=0$ and $\wt{\mathfrak{m}}(t)>0$ for $ 0\le t< t_0$. Hence by \eqref{e-S-min}, there is $x_0\in M$ and in the graph frame at $(x_0,t_0)$,  $\wt\Theta(E_1,E_2,E_2,E_1)=0=\wt{\mathfrak{m}}(t_0)$. Extend $E_i$ using Uhlenbeck's trick to an open set in spacetime near $(x_0,t_0)$ following the discussion
in subsection~\ref{subsec:Uhlenbeck}. Then we have
\be\label{e-wtTheta-1}
\left\{
  \begin{array}{ll}
    \heat (\wt\Theta(E_1,E_2,E_2,E_1))\le 0;\\
 \nabla \Theta_{1221}-2\delta e^{-Lt_0}\varphi\nabla\varphi=\nabla \wt\Theta(E_1,E_2,E_2,E_1)=0.
  \end{array}
\right.
\ee

Here and below, we write $\wt\Theta(E_1,E_2,E_2,E_1)$ as $\wt\Theta_{1221}$ etc.
Since $\wt\Theta_{ippi}\ge \wt\Theta_{1221}$ for $i=1, 2$ and $p\ge 3$. Hence if $p\ge 3$, then $S_{pp}\ge \frac12\Theta_{1221}=\frac12(\wt\Theta_{1221}+\wt\varphi)$. Moreover, $ \Theta_{1221}=\wt\varphi$ at $(x_0,t_0)$.

 Let $\a>0$ to be determined later   such that $\Theta_{1221}+\a>0$.
by Lemma \ref{l-positivity}, Lemma \ref{l-R-1}, \eqref{e-wtTheta-1} implies:
\be \label{e-max-wtTheta-1}
\begin{split}
&\frac12|\nabla\Theta_{1221}|^2\\
\ge &(\Theta_{1221}+\a)\heat  \wt\Theta_{1221}+\frac12|\nabla\Theta_{1221}|^2\\
= &(\Theta_{1221}+\a  )\lf[\heat   \Theta_{1221} + L(\wt\varphi+2\e e^{L(t_0-T)})+2\delta e^{-Lt_0}|\nabla\varphi|^2\ri] + \frac12|\nabla\Theta_{1221}|^2\\
\ge &-2\a(\Theta_{1221}+\a)  |A|^2 +4 \a  \lf(S_{11}\sum_{p=1}^m (A^{1+m}_{1p})^2+S_{22}\sum_{p=1}^m (A^{2+m}_{2p})^2\ri)\\
&+(\Theta_{1221}+\a) \bigg[\sum_{p=3}^\ell\lf( \frac{2\lambda_1^2}{(1+\lambda_1^2)^2}(K^g_{1p}+K^h_{1p})
+
\frac{2\lambda_2^2}{(1+\lambda_2^2)^2}(K^g_{2p}+K^h_{2p})
\ri)S_{pp}
\\&+ (K^g_{12}+K^h_{12}) \frac{ (\lambda_1^2+\lambda_2^2) }{ (1+\lambda_1^2) (1+\lambda_2^2)} \Theta_{1221}\bigg]\\
&+(\Theta_{1221}+\a) \lf( L(\wt\varphi+2\e e^{L(t_0-T)})+2\delta e^{-Lt_0}|\nabla\varphi|^2\ri).\\
\end{split}
\ee

Hence if we  let $\b=\delta  e^{-Lt_0}\varphi^2+\e e^{ L(t_0-T)}$ and $\a=4\b$, then $\Theta_{1221}=
\delta  e^{-Lt_0}\varphi^2-\e e^{ L(t_0-T)}$ and so $\Theta_{1221}+\a\ge 3\b>0$ { at $(x_0,t_0)$.} {And $\nabla \Theta_{1221}=2\delta e^{-Lt} \varphi \nabla \varphi$ at $(x_0,t_0)$}. Hence,
\be
\begin{split}
0\ge &-C_1\b^2 +16\b  \lf(S_{11}\sum_{p=1}^m (A^{1+m}_{1p})^2+S_{22}\sum_{p=1}^m (A^{2+m}_{2p})^2\ri)\\
&+(\Theta_{1221}+\a) \bigg[\sum_{p=3}^\ell\lf( \frac{2\lambda_1^2}{(1+\lambda_1^2)^2}(K^g_{1p}+K^h_{1p})
+
\frac{2\lambda_2^2}{(1+\lambda_2^2)^2}(K^g_{2p}+K^h_{2p})
\ri)S_{pp}
\\&+ (K^g_{12}+K^h_{12}) \frac{ (\lambda_1^2+\lambda_2^2) }{ (1+\lambda_1^2) (1+\lambda_2^2)} \Theta_{1221}\bigg]\\
&+3\b \lf( L\b +2\delta e^{-Lt_0}|\nabla\varphi|^2\ri)-2\delta^2e^{-2Lt_0}\varphi^2|\nabla\varphi|^2
\end{split}
\ee
where $|A|$ is the norm of the second fundamental form and $C_1$ is a constant depending only on the upper bounds of $|A|, \lambda_i, m, n$ which is independent of $L, \delta, \e$.   By \eqref{e-wtTheta-1}, we have for $1\le p\le \ell$,
\bee
\begin{split}
0=&\nabla_{E_p}\wt\Theta_{1221}\\
=&\nabla_{E_p}\Theta_{1221} - 2\delta e^{-Lt_0}\varphi\varphi_p\\
=&-4\lf(\frac{A^{1+m}_{1p}\lambda_1}{1+\lambda_1^2}+\frac{A^{2+m}_{2p}\lambda_2}{1+\lambda_2^2}\ri)
-2\delta\varphi e^{-Lt_0}\varphi_p.
\end{split}
\eee

{\bf Claim}: $\lambda_1^2\ge \frac13$. In fact, since $\lambda_1\ge\lambda_2$, we have
\bee
2S_{11}\le S_{11}+S_{22}=\Theta_{1221}=\wt\varphi\le \delta\le 1.
\eee
From this one can see that $\lambda_1^2\ge \frac13$ and hence
\bee
\begin{split}
| (A^{1+m}_{1p})^2-  (A^{2+m}_{2p})^2|\le &2|A| |A^{1+m}_{1p}  +  A^{2+m}_{2p} |\\
\le &C_2\lf(\delta\varphi e^{-Lt_0}|\nabla \varphi|+\lf|1-\frac{\lambda_2(1+\lambda_1^2)}{\lambda_1(1+\lambda_2^2)}\ri|\ri)\\
\le&C_2\lf(\delta\varphi e^{-Lt_0}|\nabla \varphi|+ \b \ri).
\end{split}
\eee
for some constant  $C_2>0$   depending only on the upper bounds of $|A|, \lambda_i$. Here we have used the {\bf Claim} that $\lambda_1^2\ge\frac13$ and the fact that
\bee
 \lf|1-\frac{\lambda_2(1+\lambda_1^2)}{\lambda_1(1+\lambda_2^2)}\ri|=
\lf|\frac{(1+\lambda_2^2)(\lambda_1-\lambda_2)(1-\lambda_1\lambda_2)}{\lambda_1(1+\lambda_2^2)}\ri|
\le C|\Theta_{1221}|\le C\b
\eee
for some constant depending only on the upper bound of $\lambda_i$. Hence
\bee
\begin{split}
S_{11}  (A^{1+m}_{1p})^2+S_{22}  (A^{2+m}_{2p})^2=&S_{11}\lf((A^{1+m}_{1p})^2- ((A^{2+m}_{2p})^2\ri)+\Theta_{1221}(A^{2+m}_{2p})^2\\
\ge&-C_3\lf(\delta\varphi e^{-Lt_0}|\nabla\varphi|+\b\ri)
\end{split}
\eee
for some constant $C_3$ depending only on the upper bounds of $|A|, \lambda_i$. Combining this with \eqref{e-max-wtTheta-1}, using the facts that $S_{pp}\ge \frac12\Theta_{1221}=-\frac12\wt\varphi\ge-\frac12\b$ for $p\ge 3$ and $\kappa_M+\kappa_N>0$, we have
\bee
\begin{split}
0\ge &-C_4\lf( \b^2+\b\delta e^{-Lt_0}\varphi|\nabla\varphi|\ri)+ 3\b \lf( L\b +2\delta e^{-Lt_0}|\nabla\varphi|^2\ri)-2\delta^2e^{-2Lt_0}\varphi^2|\nabla\varphi|^2
\end{split}
\eee
for some $C_4>0$ depending only on the upper bounds of $|A|, \lambda_i,m, n$ and the curvatures of $g, h$.
Now
\bee
6\b\delta e^{-Lt_0}|\nabla\varphi|^2\ge 6\delta^2  e^{-2Lt_0}\varphi^2  |\nabla\varphi|^2
\eee
and
\bee
C_4\b\delta e^{-Lt_0}\varphi|\nabla\varphi|\le   \delta^2e^{-2Lt_0} \varphi^2|\nabla\varphi|^2+\frac{C_4^2}{4}\b^2.
\eee
This implies that
\bee
0\ge -C_5\b^2+3L\b^2.
\eee
for some $C_5>0$  depending only on the upper bounds of $|A|, \lambda_i,m, n$ and the curvatures of $g, h$. This is a contradiction if we choose $L=C_5$. Namely for this choice of $L$, $\wt\Theta>0$ for $t>0$ provided $\wt\Theta>0$ at $t=0$.

Suppose $\Theta\ge 0$ initially, we let $\varphi_0=0$. Then the above result implies that $\Theta+\e e^{L(t-T)}\ge0$ for all $t>0$. Let $\e\to0$, we conclude that (i) is true.

To prove (ii), suppose $f_0$ is strictly area decreasing at some point $x$. Then we can find a smooth function $1\ge\varphi_0\ge0$ so that $\varphi_0>0$ at $x$ and  $\Theta-\frac12\varphi_0\eta\owedge\eta\ge0$ initially. Let $\varphi$ be the solution to \eqref{e-phi} and let $L=C_5$ be as above and $1>\delta,\e>0$. Let $\wt\varphi$ be as in \eqref{e-wtphi} Then we can conclude that
$\Theta- \frac12\wt\varphi\eta\owedge\eta\ge 0$ for all $t>0$. Let $\e\to 0$, since $\varphi>0$ at $t>0$ we conclude that $\Theta>0$ for $t>0$. This proves the first part of (ii).

In order to prove $ \mathfrak{m}(t)$ is non-decreasing, it is sufficient to prove that it is non-decreasing on $[t,T]$ for all $t>0$. Since $\Theta>0$ for $t>0$, without loss of generality, we may assume $\Theta\ge \rho_0>0$ at $t=0$ and to prove that $\Theta\ge\rho_0$ for $t>0$. For any $\e>0$ consider $\ol\Theta=\Theta+\frac12\e t\eta\owedge\eta$. We claim that the infimum of the eigenvalues of $\ol\Theta$ is attained at $t=0$. Otherwise, there is $t_0>0$ and $x_0\in M$ such that $\ol\Theta_{1221}$ attains the infimum in the graph frame. Using the extension with Uhlenbeck's trick, we have
$$
\heat\ol\Theta_{1221}\le 0; \ \nabla\Theta_{1221}=0
$$
at $(x_0,t_0)$. Since $\Theta>0$ everywhere, one obtain from as in \eqref{e-max-wtTheta-1} with $\a=0$,

\be \label{e-max-olTheta-1}
\begin{split}
0
 \ge &\heat  \ol\Theta_{1221}\\
\ge &   \bigg[\sum_{p=3}^\ell\lf( \frac{2\lambda_1^2}{(1+\lambda_1^2)^2}(K^g_{1p}+K^h_{1p})
+
\frac{2\lambda_2^2}{(1+\lambda_2^2)^2}(K^g_{2p}+K^h_{2p})
\ri)S_{pp}
\\&+ (K^g_{12}+K^h_{12}) \frac{ (\lambda_1^2+\lambda_2^2) }{ (1+\lambda_1^2) (1+\lambda_2^2)} \Theta_{1221}\bigg]+\e \\
\ge&\e
\end{split}
\ee
because $S_{pp}\ge\frac12\Theta_{1221}>0$ for $p\ge3$ and $\kappa_M+\kappa_N\ge0$. This is impossible. Let $\e\to0$, we conclude that $\mathfrak{m}(t)\ge \rho_0$ for $t\ge 0$. From this we also conclude that $F$ has long time solution which is a graph for all time by Theorem~\ref{thm:long-time-MCF-RF}.

Suppose $\kappa_M+\kappa_N>0$. Let $\wh\Theta=e^{-at}\Theta$ where $a>0$ to be determined. Suppose the infimum of the eigenvalues of  $\wh\Theta$ in $M\times[0,T]$ is attained at some spacetime point $(x_0,t_0)$ with $t_0>0$. Then as in \eqref{e-max-olTheta-1}, at this point,
\be \label{e-max-hatTheta-1}
\begin{split}
0
 \ge & \heat  \wh\Theta_{1221}\\
\ge &   (\kappa_M+\kappa_N)  \frac{ (\lambda_1^2+\lambda_2^2) }{ (1+\lambda_1^2) (1+\lambda_2^2)} \wh\Theta_{1221}-a\wh\Theta_{1221}.
\end{split}
\ee
Since $\Theta\ge \rho_0$ for all $t$, we conclude that $\lambda_i\le (\frac2{\rho_0}-1)$  in spacetime.  Since $\lambda_i^2$ are uniformly bounded in space and time, there is a constant $C_6>0$ depending only on $\rho_0$,
\bee
\begin{split}
\frac{ (\lambda_1^2+\lambda_2^2) }{ (1+\lambda_1^2) (1+\lambda_2^2)}\ge &C_6\lf(\frac{2\lambda_1^2}{1+\lambda_1^2}+\frac{2\lambda_2^2}{1+\lambda_2^2}\ri)\\
=&C_6(2-\Theta_{1221})\\
\ge&C_6(2- \mathfrak{m}_\infty).
\end{split}
\eee
 where $\mathfrak{m}_\infty=\lim_{t\to\infty}\mathfrak{m}(t)$ which exists and finite because $\mathfrak{m}(t)$ is nondecreasing and is bounded above by 2. We now argue by contradiction that $\mathfrak{m}_\infty=2$. Suppose $\mathfrak{m}_\infty<2$, then \eqref{e-max-hatTheta-1} implies:
\bee
0\ge C_7\Theta_{1221}-a\Theta_{1221}
\eee for some constant $C_7>0$ depending only on $C_6, \kappa_M+\kappa_N>0$ and $\mathfrak{m}_\infty<2$. Choose $a=\frac12 C_7$, we have a contradiction because $\Theta_{1221}>0$. Hence for this choice of $a$,  by letting $T\to+\infty$ we have $e^{-at}\mathfrak{m}(t)\ge \mathfrak{m}(0)\ge \rho_0>0$ for all $t$. This is impossible. Hence $\mathfrak{m}_\infty=2$ and $f_0$ is homotopically trivial.

To prove (iii), we claim the following: for $t>0$, suppose $\Theta_{1221}=0$ at a point, then at this point, $\lambda_i=1$ for all $1\le i\le \ell$. If this is true, then for $t>0$ either there is a point with $\Theta>0$ which implies $f_0$ is homotopically trivial, or $\lambda_i=1$ for $1\le i\le \ell$. Hence either $f_0$ is homotopically trivial, or if we let $t\to0$, we conclude that $\lambda_i=1$ for $1\le i\le \ell$ at $t=0$. This implies that $f_0$ is as described in the theorem.

To prove the claim, suppose $\Theta_{1221}=0$ at $(x_0,t_0)$ with $t_0>0$. By (i), $\Theta\ge0$ in spacetime, we have $\heat \Theta_{1221}=0, \nabla\Theta_{1221}=0$. Then by \eqref{e-max-wtTheta-1} for $\a>0$ with $\varphi=0, \e=0$, we have

\bee
\begin{split}
0 \ge &(\Theta_{1221}+\a)\heat   \Theta_{1221}+\frac12|\nabla\Theta_{1221}|^2\\
\ge &-2\a^2   |A|^2+4 \a  \lf(S_{11}\sum_{p=1}^m (A^{1+m}_{1p})^2+S_{22}\sum_{p=1}^m (A^{2+m}_{2p})^2\ri)\\
&+ \a    \sum_{p=3}^\ell\lf( \frac{2\lambda_1^2}{(1+\lambda_1^2)^2}(K^g_{1p}+K^h_{1p})
+
\frac{2\lambda_2^2}{(1+\lambda_2^2)^2}(K^g_{2p}+K^h_{2p})
\ri)S_{pp}
\end{split}
\eee
because $\Theta_{1221}=0$. Divide by $\a$ and then let $\a\to0$, we have

\be\label{e-rigidity-1}
\begin{split}
0 \ge &4  \lf(S_{11}\sum_{p=1}^m (A^{1+m}_{1p})^2+S_{22}\sum_{p=1}^m (A^{2+m}_{2p})^2\ri)\\
&+  (\kappa_M+\kappa_N)   \sum_{p=3}^\ell\lf( \frac{2\lambda_1^2}{(1+\lambda_1^2)^2}
+\frac{2\lambda_2^2}{(1+\lambda_2^2)^2}
\ri)S_{pp}
\end{split}
\ee
Since $\Theta_{1221}=0$ and $\lambda_1\ge\lambda_2$, we have $\lambda_1\ge 1$. Using the fact that $\nabla\Theta_{1221}=0$, as before,

\bee
\begin{split}
| (A^{1+m}_{1p})^2-  (A^{2+m}_{2p})^2|\le &2|A| |A^{1+m}_{1p}  -  A^{2+m}_{2p} |\\
\le &2|A|  |1-\frac{\lambda_2(1+\lambda_1^2)}{\lambda_1(1+\lambda_2^2)}| \\
=&0.
\end{split}
\eee
Hence
\bee
\begin{split}
\lf(S_{11}\sum_{p=1}^m (A^{1+m}_{1p})^2+S_{22}\sum_{p=1}^m (A^{2+m}_{2p})^2\ri)=&S_{11}\lf((A^{1+m}_{1p})^2-  (A^{2+m}_{2p})^2\ri)\\
=&0.
\end{split}
\eee
\eqref{e-rigidity-1} implies:
\bee
0 \ge     (\kappa_M+\kappa_N)   \sum_{p=3}^\ell\lf( \frac{2\lambda_1^2}{(1+\lambda_1^2)^2}+\frac{2\lambda_2^2}{(1+\lambda_2^2)^2}
\ri)S_{pp}.
\eee
 Since $S_{pp}\ge \frac12(S_{11}+S_{22})=0$, $\lambda_1\ge1$ and $\kappa_M+\kappa_N>0$, we have $\lambda_p=1$ for $p\ge 3$. Hence $\lambda_1\ge\lambda_2\ge\lambda_p\ge1$ and $1-\lambda_1^2\lambda_2^2=0$, we have $\lambda_p=1$ for all $1\leq p\leq \ell$. This completes the proof of the theorem under condition ({\bf A}). The case {(\bf B)} is similar.
\end{proof}

Here are some applications of Theorem \ref{t-TTW-2}:
\begin{cor}
\label{c-rigidity} Suppose $N$ is either $\mathbb{S}^n, n\ge 3$, $\mathbb{CP}^{n/2}, n\ge 4$ or $\mathbb{HP}^{n/4}, n\ge 4$ with the standard metrics. If $(M^n,g)$ is a compact manifold with nonnegative sectional curvature such that $\Ric_M\ge \Ric_N$. Then any area non-increasing map from $M$ to $N$ with nonzero degree must be an isometry.

\end{cor}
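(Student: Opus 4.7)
The plan is to apply Theorem \ref{t-TTW-2}(iii) under curvature condition (\textbf{A}) and then promote the resulting local isometry to a global one using simple connectedness of the target.

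First I would verify condition (\textbf{A}). Since $\dim M = \dim N = n$, we have $\ell = \min\{m,n\} = n$, so the correction terms $(m-\ell)\kappa_M + (n-\ell)\kappa_N$ vanish and condition (\textbf{A}) reduces to
\[
\Ric^g_{\min} \ge \Ric^h_{\max}, \qquad \kappa_M + \kappa_N > 0.
\]
Each of $\mathbb{S}^n$, $\mathbb{CP}^{n/2}$, $\mathbb{HP}^{n/4}$ with its standard metric is Einstein, so $\Ric^h$ is a constant multiple of $h$ and $\Ric^h_{\max} = \Ric^h_{\min}$; the hypothesis $\Ric_M \ge \Ric_N$ then gives $\Ric^g_{\min} \ge \Ric^h_{\max}$. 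The standard metrics on these targets all have strictly positive sectional curvature, hence $\kappa_N > 0$, and combined with $\kappa_M \ge 0$ we get $\kappa_M + \kappa_N > 0$.

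Next I would set up the graphical mean curvature flow starting from $F_0 = \mathrm{Id} \times f_0$. Short-time existence as a graph is guaranteed by Lemma \ref{l-shortime}, and on the maximal graphical interval $[0,T)$ part (iii) of Theorem \ref{t-TTW-2} applies directly (note (iii) does not require strict area decrease at a point). The theorem then forces the dichotomy: either $f_0$ is homotopically trivial, or since $m = n$, $f_0$ is a local isometry.

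The nonzero degree assumption rules out the homotopically trivial alternative, so $f_0: M \to N$ is a local isometry between compact manifolds, hence a Riemannian covering map. Since each of $\mathbb{S}^n$ $(n \ge 3)$, $\mathbb{CP}^{n/2}$ and $\mathbb{HP}^{n/4}$ is simply connected, the covering has a single sheet, so $f_0$ is a diffeomorphism and therefore an isometry. The only step requiring any care is the verification that condition (\textbf{A}) is exactly the translation of the hypotheses $\Ric_M \ge \Ric_N$ and $\kappa_M \ge 0$ when $m=n$; everything else is a direct invocation of Theorem \ref{t-TTW-2} together with standard topological facts about the targets.
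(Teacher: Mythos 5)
Your proposal is correct and follows essentially the same route as the paper: invoke Theorem \ref{t-TTW-2}(iii) under condition ({\bf A}), use the nonzero degree to exclude the homotopically trivial alternative, and upgrade the resulting local isometry to an isometry via simple connectedness of $N$. Your explicit verification that the hypotheses reduce to condition ({\bf A}) when $m=n$ (using that the targets are Einstein with positive sectional curvature) is a correct and welcome elaboration of what the paper leaves implicit.
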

\begin{proof}  Since $f_0$ has nonzero degree, it is not homotopically trivial. By the theorem, we conclude that $f_0$ is a local isometry and hence is an isometry because $N$ is simply connected.
\end{proof}

\subsection{Monotonicity in evolving background}
 In this subsection, we consider the case when $G(t)$ is evolving.  This will eventually be applied to the case of non-negative $1$-isotropic curvature. We first introduce the following definition.

 \begin{defn} For a Riemannian manifold, at a point, $\Ric_3(\pi)$ is the Ricci curvature tensor restricted on a three dimensional subspace $\pi$ of the tangent space. We say that $\Ric_3\ge a$ at a point if $\Ric_3(\pi)\ge a$ for all
 $\pi$. We say that $\Ric_3\ge a$ on $M$ if it is true at all points. In this case, $a$ is called a lower bound of $\Ric_3$.
\end{defn}

We consider the following two situations:

({\bf C}): $\p_tg=-\Ric^g, \p_t h=-\Ric^h$ on $[0,T]$ where
\be
 \chi^g(t)+\chi^h(t)\ge 0; \ \ \text{and}\ \    (m-\ell) \cdot \chi^g(t)+(n-\ell)\cdot \chi^h(t)\ge0
\ee
for all $t\in [0,T]$.

\vskip .2cm
({\bf D}): $\p_tg=-\Ric^g, \p_t h=0$  on $[0,T]$ where
\be
 \chi^g(t) \ge 0; \ \ \text{and}\ \    \tau_N \le 0
\ee
for all $t\in [0,T]$.  Here $\chi^g(t),\chi^h(t)$ denote the infimum of $\Ric_3(g(t)),\Ric_3(h(t))$  and $\tau_N$ is the upper bound of the sectional curvature of $h$.

\begin{lma}\label{l-PIC-1}
With the same assumptions and notations as in Lemma \ref{l-positivity}, we have the following:
\begin{enumerate}
  \item [(i)] Under condition ({\bf C}),
 \bee
(\mathbf{2})+(\mathbf{3})\ge -C|\Theta_{1221}|+ {\frac{2\lambda_1^2}{(1+\lambda_1^2)^2}}
\sum_{p=3}^\ell(K^g_{1p}+K^g_{2p}  +K^h_{1p}+K^h_{2p}
  )S_{pp};
\eee

\item [(ii)] Under condition ({\bf D}),
\bee
(\mathbf{2})+(\mathbf{3})\ge -C|\Theta_{1221}|+  {\frac{2\lambda_1^2}{(1+\lambda_1^2)^2}}
\lf(\sum_{p=3}^\ell(K^g_{1p}+K^g_{2p})S_{pp}-\sum_{a=1}^\ell\frac{2\tau_N\lambda_a^2}{1+\lambda_a^2}\ri)
\eee
\end{enumerate}
for some constant $C>0$ depending only on the bounds of the curvatures of $g(t), h(t), m, n$.
 \end{lma}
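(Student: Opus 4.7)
The plan is to start from the explicit forms of $(\mathbf{2})$ and $(\mathbf{3})$ recorded just before the lemma. I would first rewrite $(\mathbf{2})$ using the identities $\tfrac{1}{1+\lambda_p^2}=\tfrac{1+S_{pp}}{2}$ and $\tfrac{\lambda_p^2}{1+\lambda_p^2}=\tfrac{1-S_{pp}}{2}$, and then add and subtract $\Ric^g_{ii}-\Ric^h_{ii}$ so as to expose the Ricci-flow terms that will be cancelled by $(\mathbf{3})$. Using the convention $S_{pp}=1$ for $p>\ell$ (where $\lambda_p=0$), this puts the $11$-row, multiplied by $\tfrac12 C_{11}^2$, in the form
\[
\tfrac12 C_{11}^2\bigl[\Ric^g_{11}-\Ric^h_{11}+\sum_{p=1}^m K^g_{1p}S_{pp}+\sum_{p=1}^\ell K^h_{1p}S_{pp}+\mathbf{1}_{n>m}\sum_{p=m+1}^n K^h_{1p}\bigr],
\]
and analogously for the $22$-row. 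Throughout, I would use the elementary identities $C_{11}^2 S_{22}+C_{22}^2 S_{11}=\Theta_{1221}(1-S_{11}S_{22})$ and $|C_{22}^2-C_{11}^2|=|S_{22}^2-S_{11}^2|\le 2|\Theta_{1221}|$ to absorb off-diagonal contributions into $O(|\Theta_{1221}|)$.

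For case (\textbf{C}), $(\mathbf{3})=-\tfrac12 C_{11}^2(\Ric^g_{11}-\Ric^h_{11})-\tfrac12 C_{22}^2(\Ric^g_{22}-\Ric^h_{22})$ exactly cancels the Ricci pieces. The remaining sum splits into three blocks: the $p=1,2$ contribution is $\tfrac12(K^g_{12}+K^h_{12})(C_{11}^2 S_{22}+C_{22}^2 S_{11})=O(|\Theta_{1221}|)$; for $3\le p\le\ell$, the substitution $C_{22}^2=C_{11}^2+O(|\Theta_{1221}|)$ gives $\tfrac12 C_{11}^2\sum_{p=3}^\ell(K^g_{1p}+K^g_{2p}+K^h_{1p}+K^h_{2p})S_{pp}$ up to the allowed error; and for $p>\ell$, applying the $\Ric_3$ hypothesis on the $3$-planes $\mathrm{span}(u_1,u_2,u_p)\subset T_xM$ and $\mathrm{span}(v_1,v_2,v_p)\subset T_{f(x)}N$ yields $K^g_{1p}+K^g_{2p}\ge\chi^g$ and $K^h_{1p}+K^h_{2p}\ge\chi^h$, whose sum contributes $\tfrac12 C_{11}^2[(m-\ell)\chi^g+(n-\ell)\chi^h]\ge 0$ by assumption (\textbf{C}) and is simply discarded. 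This proves (i).

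For case (\textbf{D}), $(\mathbf{3})$ only cancels the $\Ric^g_{ii}$ terms, leaving a stray $-\Ric^h_{ii}$. I would combine this with $\sum_{p=1}^\ell K^h_{ip}S_{pp}+\mathbf{1}_{n>m}\sum_{p=m+1}^n K^h_{ip}$ into the single expression $-\sum_{p=2}^\ell K^h_{ip}(1-S_{pp})$, which holds uniformly whether $m\ge n$ or $m<n$. Using $K^h_{ip}\le\tau_N\le0$ and $1-S_{pp}\ge0$, this lower bounds the $K^h$-part of $(\mathbf{2})+(\mathbf{3})$ by $-\tfrac12\tau_N\bigl[(C_{11}^2+C_{22}^2)T-C_{11}^2 T_1-C_{22}^2 T_2\bigr]$, where $T=\sum_{p=1}^\ell(1-S_{pp})$ and $T_i=1-S_{ii}$. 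The $K^g$-part is treated exactly as in case (\textbf{C}), with the $p>\ell$ tail discarded using $\chi^g\ge0$. The main delicate step is the final reduction
\[
-\tfrac12\tau_N\bigl[(C_{11}^2+C_{22}^2)T-C_{11}^2 T_1-C_{22}^2 T_2\bigr]\;\ge\; -\tfrac12 C_{11}^2\tau_N T-C|\Theta_{1221}|,
\]
and I expect this to be the main obstacle since the individual quantities $C_{ii}^2 T_i$ do not vanish with $\Theta_{1221}$. The idea is that the difference between the two sides equals $-\tfrac12\tau_N[C_{22}^2(T-T_2)-C_{11}^2 T_1]$; because the graph-frame ordering $\lambda_1\ge\lambda_2$ gives $S_{11}\le S_{22}$ and hence $T-T_2=\sum_{p\neq 2}(1-S_{pp})\ge 1-S_{11}=T_1$, this difference is $\ge -\tfrac12\tau_N(C_{22}^2-C_{11}^2)T_1$, which is $O(|\Theta_{1221}|)$ by the estimate $|C_{22}^2-C_{11}^2|\le 2|\Theta_{1221}|$ together with the boundedness of $T_1$. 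This delivers exactly the term $\tfrac{2\lambda_1^2}{(1+\lambda_1^2)^2}\cdot\bigl(-\sum_{a=1}^\ell\tfrac{2\tau_N\lambda_a^2}{1+\lambda_a^2}\bigr)=-\tfrac12 C_{11}^2\tau_N T$ demanded by (ii).
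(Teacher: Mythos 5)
Your proposal is correct and follows essentially the same route as the paper: rewrite $(\mathbf{2})$ via $\tfrac{2}{1+\lambda_p^2}=1+S_{pp}$ and $\tfrac{2\lambda_p^2}{1+\lambda_p^2}=1-S_{pp}$, cancel the Ricci terms against $(\mathbf{3})$, absorb the $p\in\{1,2\}$ block and the discrepancy $C_{11}^2-C_{22}^2$ into $O(|\Theta_{1221}|)$ (the paper uses the identity $\tfrac{2\lambda_1^2}{(1+\lambda_1^2)^2}-\tfrac{2\lambda_2^2}{(1+\lambda_2^2)^2}=\tfrac{(\lambda_1^2-\lambda_2^2)\Theta_{1221}}{(1+\lambda_1^2)(1+\lambda_2^2)}$, equivalent to your $C_{22}^2-C_{11}^2=\Theta_{1221}(S_{11}-S_{22})$), and discard the $p>\ell$ tails via the $\Ric_3$ hypotheses. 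In case ({\bf D}) your bookkeeping of the $\tau_N$ terms via $T$ and $T_i$ is slightly more careful than the paper's (which applies $K^h_{ip}\le\tau_N$ to the full sum per row and then drops a nonnegative surplus), but it arrives at the same bound, so the two arguments are essentially identical.
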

 \begin{proof}
 (i) Assume {(\bf C)}. Then
\bee
\begin{split}
(\mathbf{2})+(\mathbf{3}) =
&\frac{2\lambda_1^2}{(1+\lambda_1^2)^2}\lf(\sum_{p=1}^m  \frac{2(K^g_{1p}-\lambda_p^2 K^h_{1p})}{1+\lambda_p^2}-\Ric^g_{11}+\Ric^h_{11}\ri)\\
&+\frac{2\lambda_2^2}{(1+\lambda_2)^2}\lf( \sum_{p=1}^m
\frac{2(K^g_{2p}-\lambda_p^2 K^h_{2p})}{1+\lambda_p^2}-\Ric^g_{22}+\Ric^h_{22}\ri)\\
\end{split}
\eee
where
\bee
\begin{split}
&\sum_{p=1}^m  \frac{2(K^g_{1p}-\lambda_p^2 K^h_{1p})}{1+\lambda_p^2}-\Ric^g_{11}+\Ric^h_{11}\\
=&\sum_{p=1}^m K^g_{1p}(1+S_{pp}) -\sum_{p=1}^\ell K^h_{1p}(1-S_{pp})-\sum_{p=1}^m K^g_{1p}+\sum_{p=1}^n K^h_{1p}\\
=& (K^g_{12}+K^h_{12})S_{22}+\sum_{p=3}^\ell(K^g_{1p}+K^h_{1p})S_{pp}
+\sum_{p=\ell+1}^m K^g_{1p}   +\sum_{p=\ell+1}^n K^h_{1p} .
\end{split}
\eee
Similarly,
\bee
\begin{split}
&\sum_{p=1}^m  \frac{2(K^g_{2p}-\lambda_p^2 K^h_{2p})}{1+\lambda_p^2}-\Ric^g_{22}+\Ric^h_{22}\\
=& (K^g_{12}+K^h_{12})S_{11}+\sum_{p=3}^\ell(K^g_{2p}+K^h_{2p})S_{pp}
+\sum_{p=\ell+1}^m K^g_{2p} +\sum_{p=\ell+1}^n K^h_{2p} \\
\end{split}
\eee
On the other hand,
\bee
\frac{2\lambda_1^2}{(1+\lambda_1^2)^2}-\frac{2\lambda_2^2}{(1+\lambda_2^2)^2}=
\frac{(\lambda_1^2-\lambda_2^2)\Theta_{1221}}{(1+\lambda_1^2)(1+\lambda_2^2)}.
\eee
Hence,
\bee
\begin{split}
(\mathbf{2})+(\mathbf{3}) =
&\lf(\frac{2\lambda_2^2}{(1+\lambda_2^2)^2}-\frac{2\lambda_1^2}{(1+\lambda_1^2)^2}\ri)\times\\
&\bigg[(K^g_{12}+K^h_{12})S_{11}+\sum_{p=3}^\ell(K^g_{2p}+K^h_{2p})S_{pp}
+\sum_{p=\ell+1}^m K^g_{1p} +\sum_{p=\ell+1}^n K^h_{1p}  \bigg]\\
&+\frac{2\lambda_1^2}{(1+\lambda_1^2)^2}\bigg[(K^g_{12}+K^h_{12})\Theta_{1221}
+\sum_{p=3}^\ell(K^g_{1p}+K^g_{2p}  +K^h_{1p}+K^h_{2p}
  )S_{pp}\\
  &+\sum_{p=\ell+1}^m (K^g_{1p}+K^g_{2p}) +\sum_{p=\ell+1}^n (K^h_{1p}+K^h_{2p})\bigg]\\
  \ge&-C|\Theta_{1221}| + {\frac{2\lambda_1^2}{(1+\lambda_1^2)^2}}  \sum_{p=3}^\ell(K^g_{1p}+K^g_{2p}  +K^h_{1p}+K^h_{2p}
  )S_{pp}
\end{split}
\eee
for some positive constants $C$  depending only on the bounds of the curvatures of $g(t), h(t), m, n$. This completes the proof of (i).

(ii) Assume ({\bf D}),  then
\bee
\begin{split}
(\mathbf{2})+(\mathbf{3}) =
&\frac{2\lambda_1^2}{(1+\lambda_1^2)^2}\lf(\sum_{p=1}^m  \frac{2(K^g_{1p}-\lambda_p^2 K^h_{1p})}{1+\lambda_p^2}-\Ric^g_{11} \ri)\\
&+\frac{2\lambda_2^2}{(1+\lambda_2)^2}\lf( \sum_{p=1}^m
\frac{2(K^g_{2p}-\lambda_p^2 K^h_{2p})}{1+\lambda_p^2}-\Ric^g_{22} \ri)\\
\end{split}
\eee
Now
\bee
\begin{split}
&\sum_{p=1}^m  \frac{2(K^g_{1p}-\lambda_p^2 K^h_{1p})}{1+\lambda_p^2}-\Ric^g_{11}\\
\ge&\sum_{p=1}^m K^g_{1p}(1+S_{pp}) -\sum_{a=1}^\ell\frac{2\tau_N\lambda_a^2}{1+\lambda_a^2} -\Ric^g_{11} \\
\geq & K^g_{12}S_{22}+\sum_{p=3}^\ell K^g_{1p}S_{pp}
 -\sum_{a=1}^\ell\frac{2\tau_N\lambda_a^2}{1+\lambda_a^2}  \\
\end{split}
\eee
Similarly,
\bee
\begin{split}
&\sum_{p=1}^m  \frac{2(K^g_{2p}-\lambda_p^2 K^h_{2p})}{1+\lambda_p^2}-\Ric^g_{22} \\
\ge& K^g_{12}S_{11}+\sum_{p=3}^\ell K^g_{2p}S_{pp}
 -\sum_{a=1}^\ell\frac{2\tau_N\lambda_a^2}{1+\lambda_a^2}.
\end{split}
\eee
Hence as in the proof of (i), one can conclude that (ii) is true.
\end{proof}

We first show that the area non-increasing is preserved as long as the ambient space is evolving and the correspond Rigidity. More precisely, we have the following.
 \begin{thm}\label{t-pic}
Let $(M^m, g(t)), (N^n,h(t))$ be two compact manifolds with $\ell=\min\{m,n\}\geq 3$, $t\in [0,T]$.    Suppose $f_0$ is a smooth map from $M$ to $N$. Let $F: M\to (M\times N, g\oplus h)$ be a smooth mean curvature flow defined on $M\times[0,T]$   with initial map $F_0=\mathrm{Id}\times f_0$. Moreover, assume $F$ is a graph given by a map $f_t:M\to N$ for all $t$. Assume that $f_0$ is with area non-increasing from $(M,g(0))$ to $(N,h(0))$.
Suppose ({\bf C}) or ({\bf D}) is true, then the followings are true.
\begin{enumerate}
   \item [(i)] $f_t$ is area non-increasing for $t\in [0,T]$.
   \item [(ii)] If $f_0$ is strictly area decreasing at a point, then $f_t$ is strictly area decreasing for $t>0$. Moreover, there is a constant $a>0$ depending only on the bounds of the curvatures of $g(t), h(t), m, n$ so that $e^{at}\mathfrak{m}(t)$ is non-decreasing. In particular, if $g(t), h(t)$ are defined on $M\times[0,T_{max})$ and  $N\times[0,T_{max})$ respectively, then the mean curvature flow $F$ exists and remains graphic on $M\times[0,T_{max})$.
\item[(iii)] If in addition, $\chi^g+\chi^h> 0$ in case of ({\bf C}) and; $\chi^g>0$ in case of ({\bf D}),  then either  $f_t$ is strictly area decreasing for all $t\in (0,T]$ or  $f_0$ is a  Riemanian submersion (if $m>n$), local isometry (if $m=n$),  isometric immersion (if $m<n$).
\item[(iv)]  If in addition $\tau_N<0$ in case of ({\bf D}), then $f_t$ is strictly area decreasing for all $t\in (0,T]$.
 \end{enumerate}
\end{thm}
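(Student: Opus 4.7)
The plan is to repeat the three-step argument from the proof of Theorem~\ref{t-TTW-2} — perturbation to preserve area non-increasingness, maximum principle to upgrade to exponential monotonicity, and strong maximum principle to extract rigidity — with only two ingredients requiring adjustment. First, the curvature estimate in Lemma~\ref{l-PIC-1} carries an additional error term $-C|\Theta_{1221}|$ stemming from the time derivatives of $g$ and $h$; this is precisely why the monotonicity is stated for $e^{at}\mathfrak{m}(t)$ rather than $\mathfrak{m}(t)$ itself. Second, positivity of the combination $K^g_{1p}+K^g_{2p}+K^h_{1p}+K^h_{2p}$ (and its analogue in case~(\textbf{D})) now comes from the $\Ric_3$ hypothesis via the identification $K^g_{1p}+K^g_{2p}=\Ric^g(u_p,u_p)$ computed inside the $3$-plane $\mathrm{span}\{u_1,u_2,u_p\}$, so this expression is bounded below by $\chi^g$ by the very definition of $\chi^g$, and similarly $K^h_{1p}+K^h_{2p}\geq\chi^h$.

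For part (i) and the positivity half of part (ii), I would take $\wt\Theta=\Theta-\tfrac12\wt\varphi\,\eta\owedge\eta$ with $\wt\varphi=\delta e^{-Lt}\varphi^2-\e e^{L(t-T)}$ as in \eqref{e-wtphi}, where $\varphi$ solves the heat equation \eqref{e-phi}. If $\wt\Theta$ first fails to be positive at an interior $(x_0,t_0)$, pick the graph frame and apply Lemma~\ref{l-positivity} with $\a=4\b$ where $\b=\delta e^{-Lt_0}\varphi^2+\e e^{L(t_0-T)}$. Combining with Lemma~\ref{l-PIC-1}, the slack $-C|\Theta_{1221}|\le C\b$ is of order $\b^2$ and is absorbed into the good $3L\b^2$ term by enlarging $L$; the bound $\lambda_1^2\ge\tfrac13$ and the estimate on $(A^{1+m}_{1p})^2-(A^{2+m}_{2p})^2$ from the proof of Theorem~\ref{t-TTW-2} carry over verbatim. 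Sending $\e\to 0$ yields (i) and $\Theta>0$ for $t>0$ in~(ii). For the exponential monotonicity, apply the maximum principle to $\ol\Theta=e^{at}\Theta+\tfrac12\e t\,\eta\owedge\eta$ over $[s,T]$ with $s>0$: at an interior minimum, Lemma~\ref{l-positivity} with $\a=0$ (valid since $\Theta>0$) together with Lemma~\ref{l-PIC-1} gives $\heat\Theta_{1221}\ge -C\Theta_{1221}$, so $\heat\ol\Theta_{1221}\ge e^{at_0}(a-C)\Theta_{1221}+\e$. Choosing $a=C$ makes the right side positive, contradicting $\heat\ol\Theta_{1221}\le 0$; sending $\e\to 0$ then produces $e^{at}\mathfrak{m}(t)\ge e^{as}\mathfrak{m}(s)$, and the long-time existence on $[0,T_{\max})$ follows from Theorem~\ref{thm:long-time-MCF-RF}.

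For (iii) and (iv) the argument is a strong maximum principle at a first point where $\Theta_{1221}=0$. At such an interior $(x_0,t_0)$, we have $\nabla\Theta_{1221}=0$ and $\heat\Theta_{1221}\le 0$. Plugging into Lemma~\ref{l-positivity} with parameter $\a>0$, dividing by $\a$, and letting $\a\to 0$ erases the $|A|^2$ term and, because $\Theta_{1221}=0$, also erases $-C|\Theta_{1221}|$. The identity $\Theta_{1221}=0$ forces $\lambda_1\lambda_2=1$, and the gradient relation derived from $\nabla\Theta_{1221}=0$ gives $(A^{1+m}_{1p})^2=(A^{2+m}_{2p})^2$; combined with $S_{11}+S_{22}=0$, the $4\a$-bracket drops as well. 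In case~(\textbf{C}) what remains is
\bee
0\ge \tfrac{2\lambda_1^2}{(1+\lambda_1^2)^2}\sum_{p=3}^\ell\lf(K^g_{1p}+K^g_{2p}+K^h_{1p}+K^h_{2p}\ri)S_{pp}.
\eee
Using the $\Ric_3$ identification above, the summed bracket is at least $\chi^g+\chi^h>0$; since $S_{pp}\ge\tfrac12\Theta_{1221}=0$, this forces $S_{pp}=0$, hence $\lambda_p=1$, for all $3\le p\le\ell$. The constraint $\Theta_{1pp1}=S_{11}+S_{pp}\ge 0$ then forces $\lambda_1\le 1$, which combined with $\lambda_1\lambda_2=1$ yields $\lambda_i=1$ for all $1\le i\le\ell$; letting $t_0\to 0^+$ gives the submersion/isometry description of $f_0$. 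For (iv) the analogous computation with Lemma~\ref{l-PIC-1}(ii) contributes the strictly positive term $-\sum_a\tfrac{4\tau_N\lambda_a^2}{1+\lambda_a^2}$ (using $\lambda_1\ge 1$ and $\tau_N<0$), contradicting the resulting inequality; hence $\Theta$ must remain strictly positive on $(0,T]$.

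The principal obstacle is to verify that the new $-C|\Theta_{1221}|$ error is benign in both subschemes: in the perturbation step it is of the right order to be dominated by $L\b^2$ once $L$ is enlarged, and in the strong maximum principle step it vanishes identically at $\Theta_{1221}=0$. Everything else is a direct transcription of the static-background proof, once the $\Ric_3$ lower bound $K^g_{1p}+K^g_{2p}\ge\chi^g$ (respectively for $h$) replaces the sectional curvature lower bound used in Theorem~\ref{t-TTW-2}.
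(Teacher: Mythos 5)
Your proposal is correct and follows essentially the same route as the paper: the perturbation/maximum-principle scheme of Theorem \ref{t-TTW-2} with Lemma \ref{l-R-1} replaced by Lemma \ref{l-PIC-1}, the extra $-C|\Theta_{1221}|$ error absorbed either into the $L\b^2$ term or into the exponential factor $e^{at}$, and the rigidity extracted at a zero of $\Theta_{1221}$ using $\lambda_1\lambda_2=1$ together with the $\Ric_3$ bound $K^g_{1p}+K^g_{2p}\ge\chi^g$ (resp.\ $\ge\chi^h$ for $h$). The only point left implicit in your sketch of (iii) is the time-slice dichotomy from Theorem \ref{t-TTW-2}(iii) (via the strict-positivity propagation in (ii)) that upgrades $\lambda_i=1$ at a single spacetime zero to $\lambda_i\equiv 1$ on all of $M$ as $t\to 0$, which the paper likewise delegates to the static argument.
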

\begin{proof} We only prove the case ({\bf C}) while the case ({\bf D}) can be proved using similar argument. Let $\phi, \varphi$ be as in the proof the Theorem \ref{t-TTW-2} with $0\le \delta, \e<1$ and $L>0$ to be determined.
The proof of (i) and the first statement of (ii) are similar to the proof of Theorem \ref{t-TTW-2}.

We focus on the second assertion of (ii). Let $a>0$ to be determined and let $\ol \Theta =e^{at}\Theta$. We want to prove that  $e^{at}\mathfrak{m}(t)$ is nondecreasing. Since $\Theta>0$ for $t>0$, we may assume that $\mathfrak{m}(0)>0$. Suppose $e^{at}\mathfrak{m}(t)<\mathfrak{m}(0)$ for some $t>0$. Then there exists $x_0\in M, t_0>0$ such that $e^{at_0}\Theta_{1221}=e^{at_0}\mathfrak{m}(t_0)\le e^{at}\mathfrak{m}(t)$ for $0\le t\le t_0$. Then we by Lemmas \ref{l-positivity} (with $\a=0$) and Lemma~\ref{l-PIC-1},
 \bee
 \begin{split}
 0\ge& \heat\ol\Theta_{1221}+\frac12\Theta_{1221}^{-1}|\nabla\Theta_{1221}|^2\\
 \ge&-C_4\Theta_{1221}+a\Theta_{1221}
 \end{split}
 \eee
for some constant $C_4$ depending only on the bounds of the curvatures of $g(t), h(t), m, n$, at some point in space-time. This is impossible, if we take $a=2C_4$ because $\Theta>0$. Hence $e^{at}\mathfrak{m}(t)$ is nondecreasing. By Theorem \ref{thm:long-time-MCF-RF}, the last assertion of (ii) is true.

The proof of  (iii) and (iv) are similar to the proof of Theorem \ref{t-TTW-2}(iii) using the fact that if $\Theta_{1221}=0$ then $\lambda_1\ge 1$.
\end{proof}

\begin{rem}\label{rem:pre-IC-Ric3}
The conditions ({\bf C}) and ({\bf D}) are not necessarily preserved along the Ricci flow. When $n=3$, $\chi^g(t)\geq 0$ is equivalent to $\Ric\geq 0$ which is preserved along the Ricci flow by Hamilton \cite{Hamilton1982}. When $n\geq 4$, $\chi^g(t)\geq 0$ can be ensured by $\chi_{IC1}\geq 0$, which is preserved along the Ricci flow thanks to the work of Brendle-Schoen \cite{BrendleSchoen2009} and Nguyen \cite{Nguyen2010}.
\end{rem}

Now we are ready to prove the rigidity of maps under non-negative $1$-isotropic curvature condition.  We need the following:
\begin{lma}\label{l-long-time-RF}
Suppose $(M^n,g_0),n\geq 3$ is compact, simply connected, non-symmetric,  irreducible compact manifold such that  $\chi_{IC1}(g_0)\geq 0$. Let $g(t),t\in [0,T_{max})$ be the maximal Ricci flow solution starting from $g_0$. Then $T_{max}<\infty$ and as $t\to T_{\max}$, the curvature of $g(t)$ will tend to infinity. In particular, if $g_0$ is Einstein, then $g_0$ has positive sectional curvature.
When $n=3$, $\chi_{IC1}\geq 0$ is understood to be $\Ric\geq 0$.
\end{lma}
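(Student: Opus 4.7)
The starting observation is that $\chi_{IC1}(g(t))\geq 0$ is preserved along the Ricci flow: for $n\geq 4$ this is the invariance theorem of Brendle--Schoen \cite{BrendleSchoen2009} and Nguyen \cite{Nguyen2010}, while for $n=3$ the convention reduces the statement to Hamilton's preservation of $\Ric\geq 0$ \cite{Hamilton1982}. In particular $\Ric(g(t))\geq 0$ and $\mathcal{R}(g(t))\geq 0$ throughout $[0,T_{max})$, and no finite-time collapse of $\mathcal{R}$ can occur from below.

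The crucial step is to upgrade this to \emph{strict} positivity of $\chi_{IC1}(g(t))$ for $t>0$. For this the plan is to invoke Hamilton's strong maximum principle in the form developed by Brendle \cite{Brendle2008} for the convex cone $C_{PIC1}$: if $\chi_{IC1}(g(t_0))=0$ at some point for some $t_0>0$, then $\Rm(g(t_0))$ lies in a proper boundary face of $C_{PIC1}$ at every point. Brendle's classification of such faces shows that this forces $(M,g(t_0))$ to be either locally reducible, locally symmetric, or to carry a parallel complex or quaternionic structure incompatible with holonomy irreducibility. Since both irreducibility and the non-symmetric property depend only on the holonomy representation and are therefore preserved along the flow, the assumptions rule out every such possibility. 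Consequently $\chi_{IC1}(g(t))>0$ strictly on $M\times(0,T_{max})$.

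Once strict positivity of $\chi_{IC1}$ is in hand, there exists $t_\ast>0$ with $\mathcal{R}_{\min}(g(t_\ast))>0$, and the standard scalar curvature evolution
\[
\partial_t \mathcal{R} \;=\; \Delta \mathcal{R} + 2|\Ric|^2 \;\geq\; \Delta \mathcal{R} + \tfrac{2}{n}\mathcal{R}^2
\]
together with the parabolic maximum principle forces $\mathcal{R}_{\min}(t)\to+\infty$ in finite time. Hence $T_{max}<\infty$, and Hamilton's long-time existence criterion then gives $|\Rm(g(t))|\to\infty$ as $t\to T_{max}$.

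For the Einstein statement, if $\Ric(g_0)=c\,g_0$ then the Ricci flow is the rescaling $g(t)=(1-2ct)g_0$, so $T_{max}<\infty$ forces $c>0$. Since $C_{PIC1}$ is scale-invariant, the strict positivity $\chi_{IC1}(g(t))>0$ descends to $\chi_{IC1}(g_0)>0$. Invoking Brendle's Sphere Theorem \cite{Brendle2008}, the normalized Ricci flow starting at $g_0$ converges to a round metric on a spherical space form; but the normalized flow is stationary at an Einstein metric, so combined with the simple connectivity of $M$, this identifies $(M,g_0)$ with the standard round sphere, which has constant positive sectional curvature. The main obstacle is the strong maximum principle step, namely the careful identification of every equality case in the cone $C_{PIC1}$ and its exclusion via the irreducibility and non-symmetric hypotheses; this is precisely where Brendle's detailed analysis of invariant faces of $C_{PIC1}$ is needed.
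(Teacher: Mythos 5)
Your skeleton (preservation of $\chi_{IC1}\ge 0$ under Ricci flow, a strong-maximum-principle analysis of the borderline case, Brendle's convergence theorem, finite-time singularity via the scalar curvature ODE, and the observation that an Einstein metric is a fixed point of the normalized flow) matches the paper's strategy. But there is a genuine gap at the step you yourself identify as the crucial one. You assert that Brendle's classification of the boundary faces of $\mathrm{C}_{PIC1}$ forces the borderline manifold to be "locally reducible, locally symmetric, or to carry a parallel complex or quaternionic structure incompatible with holonomy irreducibility," and you conclude that irreducibility plus non-symmetry rules out every alternative, so that $\chi_{IC1}(g(t))>0$ for $t>0$. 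The last clause is false: a parallel complex structure is perfectly compatible with holonomy irreducibility (holonomy $\mathrm{U}(n/2)$ acts irreducibly on $\mathbb{R}^n$), and Kähler manifolds such as $\mathbb{CP}^{n/2}$ are irreducible with $\chi_{IC1}=0$. The correct trichotomy in the locally irreducible, non-symmetric case is: either $\chi_{IC1}(g(t))>0$ instantly, or $(M,g(t))$ is Kähler with holonomy $\mathrm{U}(n/2)$. The paper runs the Berger holonomy classification explicitly for this reason: the quaternion-K\"ahler case is excluded by Brendle's theorem that such Einstein manifolds with nonnegative isotropic curvature are symmetric, the $\mathrm{SO}(n)$ case is shown (via parallel transport of the equality set) to force strict PIC1, but the $\mathrm{U}(n/2)$ case genuinely survives and must be handled by a separate argument: Seshadri's result gives positive orthogonal bisectional curvature, and the Chen--Tian/Chen/Wilking convergence of the (normalized) K\"ahler--Ricci flow to $\mathbb{CP}^{n/2}$ then yields the curvature blow-up and, in the Einstein case, identifies $g_0$ up to scale. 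Without this branch your argument does not close, and neither does your derivation of $\mathcal{R}_{\min}>0$ (which you route through the unproved strict positivity); the paper instead gets $\mathcal{R}(g_0)>0$ somewhere directly, since $\mathcal{R}\equiv 0$ together with $\chi_{IC1}\ge 0$ forces flatness, contradicting irreducibility.

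A secondary point: the conclusion "the curvature of $g(t)$ tends to infinity" is used later (Theorem \ref{t-pic-rigidity}) in the strong sense that all sectional curvatures of $g(t)$ become large, so that the curvature hypotheses of Theorem \ref{t-TTW-2} are eventually satisfied. Hamilton's finite-time singularity criterion only gives $\limsup_{t\to T_{max}}\sup_M|\mathrm{Rm}|=\infty$; the stronger statement comes from the convergence, after normalization, to a round metric (or to $\mathbb{CP}^{n/2}$ in the K\"ahler branch), whose sectional curvatures are pinched and positive. Your appeal to the long-time existence criterion alone does not deliver what the lemma is actually used for.
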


\begin{proof} If $n=3$, then  the result follows from   the work of Hamilton \cite{Hamilton1982,Hamilton1986} because $(M,g_0)$ is locally irreducible.

Suppose $n\ge 4$.
 $\chi_{IC1}(g_0)\geq 0$ implies that the scalar curvature $\mathcal{R}(g_0)\ge 0$. If $\mathcal{R}(g_0)\equiv0$, then $g_0$ is flat. In fact, for an orthonormal frame $e_i$, let $K_{ij}$ with $i\neq j$ be the sectional curvature of the two planes spanned by $e_i, e_j$. Then we have $K_{ik}+K_{il}=0$ for all $i<k<l$. Hence $K_{ij}=0$ for all $i<j$. Hence we must have $\mathcal{R}(g_0)>0$ somewhere. By the evolution equation of $\mathcal{R}(g(t))$ and the strong maximum principle, $\mathcal{R}(g(t))>0$ for $t>0$. Hence $T_{max}<\infty$.

First, for  $n\geq 4$,  it follows from \cite{BrendleSchoen2009,Nguyen2010} that $\chi_{IC1}(g(t))\geq 0$ for all $t\in [0,T_{max})$.   Suppose $\chi_{IC1}(g_0)>0$ at some point, then it follows from \cite{BrendleSchoen2009,Nguyen2010} that $\chi_{IC1}(g(t))>0$ for all $t>0$ and the result   follows from the work of Brendle \cite{Brendle2008}.

From now on, we assume this is not the case. We might assume $g=g(t_0)$ is irreducible by choosing $t_0$ sufficiently small.  We apply the Berger classification Theorem to deduce that $(M^n,g)$ is either quaternion-K\"ahler or has holonomy group $\mathrm{SO}(n)$ or $\mathrm{U}(\frac{n}{2})$ since the remaining are Ricci flat and hence flat by $\chi_{IC1}\geq 0$.   It also follows from \cite{Brendle2010} that quaternion-K\"ahler case is indeed symmetric and so does $g_0$. Hence,  the quaternion-K\"ahler case is ruled out.

Suppose $\mathrm{Hol}(M,g)=\mathrm{SO}(n)$, we claim that we must have $\chi_{IC1}(g)>0$.  This was implicitly proved in \cite{BrendleSchoen2008}, we include it for readers' convenience. Suppose there is $x_0\in M$, $\lambda\in [0,1]$ and orthonormal frame $\{e_i\}_{i=1}^4$ at $(x_0,t_0)$ such that
\begin{equation}
R_{1331}+\lambda^2 R_{1441}+R_{2332}+\lambda^2 R_{2442}-2\lambda R_{1234}=0.
\end{equation}
By \cite[Proposition 5]{BrendleSchoen2008}, the equality is invariant under parallel transport. Since $\mathrm{Hol}(M,g)=\mathrm{SO}(n)$, we might obtain $\mathcal{R}(g)=0$ at $x_0$ by considering the element $e_1\mapsto -e_1$, $(e_3,e_4)\mapsto (e_4,e_3)$ which is an element in $\mathrm{SO}(n)$ to show that
\begin{equation}
\left(R_{1331}+R_{1441}\right)+\left(R_{2332}+R_{2442}\right)=0.
\end{equation}
Since $\chi_{IC1}\geq 0$, we must have $R_{1331}+R_{1441}=0$. Using parallel transport with $\mathrm{Hol}(M,g)=\mathrm{SO}(n)$ again, we conclude that $\mathcal{R}(g)\equiv 0$ which is impossible. This proves our claim. Hence in this case, the lemma is true.

Suppose $\mathrm{Hol}(M,g)=\mathrm{U}(\frac{n}{2})$. Then by    \cite{Seshadri2009}, $(M,g)$ is \K with positive orthogonal bisectional curvature.   It follows from \cite{ChenTian2006,Chen2007,Wilking2013} that the normalized Ricci flow from $g(t)$ converges to $\mathbb{CP}^{n/2}$ as $t\to T_{max}$ after rescaling. Hence the curvature of $g(t)$ also tends to infinity.
\end{proof}

We now apply Theorem~\ref{t-pic} and Lemma \ref{l-long-time-RF} to study the rigidity of area non-increasing maps in the following two cases: \vskip0.2cm

({\bf E}): $(N^n,h_0)$  and $(M^m,g_0)$ satisfy
\begin{equation}
\left\{
\begin{array}{ll}
(N^n,h_0) \text{  is Einstein with } \kappa_N\geq 0;\\[3mm]
(M^m,g_0) \text{ is locally irreducible and non-symmetric with }\chi_{IC1}(g_0)\geq 0;\\[3mm]
\displaystyle  \mathcal{R}_{min}(g_0)\geq \frac{m}{n}\;\mathcal{R}_{max}(h_0)
\end{array}
\right.
\end{equation}

\vskip .2cm

({\bf F}): $(M^m,g_0)$ is locally irreducible and non-symmetric and $(N^n,h_0)$ satisfies
\be
\tau_N\leq 0, \quad \chi_{IC1}(g_0)\geq 0,\quad\text{and}\quad \mathcal{R}_{min}(g_0)\geq \frac{m}{n}\mathcal{R}_{max}(h_0)
\ee
where $\tau_N$ denotes an upper bound of the sectional curvature of $h_0$ and $\mathcal{R}$ denotes the scalar curvature.

\begin{thm}\label{t-pic-rigidity}
Suppose $(M^m, g_0)$ and $(N^n,h_0)$ be two compact manifolds with $\ell=\min\{m,n\}\geq 3$  such that ({\bf E}) or ({\bf F}) holds. If $f_0$ is a smooth map from $M$ to $N$ which is area non-increasing from $(M,g_0)$ to $(N,h_0)$, then we have the following.
\begin{enumerate}
\item[(i)]   $(M^m,g_0)$ is not Einstein, and $f_0$ is homotopically trivial; or
   \item [(ii)]   $(M^m,g_0)$ is Einstein, and either  $f_0$ is  homotopical trivial or  $f_0$ is local isometry (if $m=n$),  isometric immersion (if $m<n$).

\item[(iii)]  If in addition $\tau_N<0$ in case of ({\bf F}), then $f_0$ is homotopy trivial.
 \end{enumerate}
\end{thm}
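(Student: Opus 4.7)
The plan is to run a coupled graphical mean curvature flow / Ricci flow and argue by a dichotomy on whether $(M,g_0)$ is Einstein, using Lemma~\ref{l-long-time-RF} as the key structural input on the Ricci flow side.

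\textbf{Einstein case.} If $(M,g_0)$ is Einstein, Lemma~\ref{l-long-time-RF} yields $\kappa_M>0$. Under (\textbf{E}), both $g_0$ and $h_0$ are Einstein, so the hypothesis $\mathcal{R}_{\min}(g_0)\ge \tfrac{m}{n}\mathcal{R}_{\max}(h_0)$ becomes $\Ric^g_{\min}\ge \Ric^h_{\max}$, which together with $\kappa_M>0$ and $\kappa_N\ge 0$ verifies condition (\textbf{A}) of Theorem~\ref{t-TTW-2}. Under (\textbf{F}), $\kappa_M>0$ and $\tau_N\le 0$ trivially give condition (\textbf{B}). In either case Theorem~\ref{t-TTW-2}(iii) gives conclusion (ii) immediately.

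\textbf{Non-Einstein case: setting up the coupled flow.} If $(M,g_0)$ is not Einstein, Lemma~\ref{l-long-time-RF} provides a Ricci flow $g(t)$ on $[0,T_{\max})$ with $T_{\max}<\infty$ and blowing-up curvature. Combining the Brendle--Schoen / Nguyen preservation of the $\mathrm{C}_{PIC1}$ cone with the strong maximum principle, and using that $g_0$ is locally irreducible and non-symmetric to rule out the holonomy-rigid scenarios as in the proof of Lemma~\ref{l-long-time-RF}, one concludes $\chi_{IC1}(g(t))>0$ at every point for $t>0$, and hence $\chi^g(t)>0$. Under (\textbf{E}) evolve $h(t)=(1-2\mu t)h_0$ by the Einstein Ricci flow (where $\Ric^h=\mu h_0$, $\mu\ge 0$), preserving $\chi^h(t)\ge 0$; under (\textbf{F}) keep $h(t)\equiv h_0$. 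Then the hypotheses of Theorem~\ref{t-pic} (condition (\textbf{C}) in case (\textbf{E}) and (\textbf{D}) in case (\textbf{F})) are satisfied on any $[0,T]$ with $T$ less than both maximal existence times.

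\textbf{Applying the coupled rigidity theorem.} By Theorem~\ref{t-pic}(i), area non-increasing is preserved. By Theorem~\ref{t-pic}(iii), with $\chi^g+\chi^h>0$ (resp.\ $\chi^g>0$) for $t>0$, either $f_t$ is strictly area decreasing on $(0,T]$, or $f_0$ is a Riemannian submersion / local isometry / isometric immersion. The rigid alternative must now be excluded in the non-Einstein setting: under (\textbf{E}), a local isometry ($m=n$) would pull the Einstein $h_0$ back to force $g_0$ Einstein, and in the $m<n$ case one combines the Gauss equation with the scalar-curvature pinching $\mathcal{R}_{\min}(g_0)\ge \tfrac{m}{n}\mathcal{R}_{\max}(h_0)$ to derive the same contradiction; under (\textbf{F}), any such rigidity would transfer $\tau_N\le 0$ to sectional curvatures on $M$, and coupled with $\chi_{IC1}(g_0)\ge 0$ (using $\mu=0$ in Definition~\ref{defn:PIC1} to get $K_{13}+K_{23}\ge 0$) this forces $g_0$ flat, contradicting local irreducibility and non-symmetry. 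Hence $f_t$ is strictly area decreasing for $t>0$, and by Theorem~\ref{thm:long-time-MCF-RF} the coupled flow persists as a graph up to $T_{\max}$.

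\textbf{Conclusion via degeneration at $T_{\max}$.} As $t\to T_{\max}$ the curvature of $g(t)$ blows up. One then pushes the dynamic maximum principle for $\Theta_{1221}$ from the proof of Theorem~\ref{t-TTW-2}(ii) into the evolving background, now fueled by the diverging positivity of $\chi^g(t)$, to upgrade the bare monotonicity of Theorem~\ref{t-pic}(ii) into $\mathfrak{m}(t)\to 2$ as $t\to T_{\max}$. By \eqref{eqn:graph-frame-1}, $\mathfrak{m}(t)\to 2$ forces every $\lambda_i\to 0$, so $f_t$ converges to a constant map. Since $f_0$ is smoothly isotopic to $f_t$ along the flow for each $t\in(0,T_{\max})$, this gives conclusion (i). Part (iii) is immediate: under (\textbf{F}) with $\tau_N<0$, Theorem~\ref{t-pic}(iv) makes $f_t$ strictly area decreasing for every $t>0$ regardless, and the same degeneration argument at $T_{\max}$ yields homotopy triviality.

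\textbf{Main obstacle.} The hardest step is the final degeneration: Theorem~\ref{t-pic}(ii) only gives $e^{at}\mathfrak{m}(t)$ non-decreasing, so one must extract genuine \emph{improvement} of $\mathfrak{m}(t)$ out of the Ricci-flow singularity. This requires controlling the interaction between the blow-up of $g(t)$ (hence of $\eta(t)=F_t^\ast G(t)$) and the graphical MCF, likely via a parabolic rescaling / compactness argument producing a limiting MCF on the singularity model (round sphere quotient or $\mathbb{CP}^{n/2}$), to which the static Theorem~\ref{t-TTW-2} can be applied.
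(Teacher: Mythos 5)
Your overall architecture (dichotomy on whether $g_0$ is Einstein, Lemma \ref{l-long-time-RF} as the structural input, coupled MCF/Ricci flow in the non-Einstein case, reduction to the static Theorem \ref{t-TTW-2}) is the paper's, and your Einstein case is essentially the paper's argument. One small omission there: when $m>n$ under ({\bf E}), Theorem \ref{t-TTW-2}(iii) still leaves the ``Riemannian submersion'' alternative, which does not appear in the statement of (ii); the paper removes it by noting $\Ric^{g_0}_{\min}-\Ric^{h_0}_{\max}+(m-n)\kappa_M>0$ is strict, shrinking $h_0$ to make $f_0$ strictly area decreasing, and invoking Theorem \ref{t-TTW-2}(ii) to get homotopy triviality outright.

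The genuine gap is the endgame of the non-Einstein case, and you flag it yourself: upgrading the monotonicity of $e^{at}\mathfrak{m}(t)$ from Theorem \ref{t-pic}(ii) to $\mathfrak{m}(t)\to 2$ at the Ricci flow singularity is not carried out, and the rescaling/compactness analysis you sketch is not needed. The paper's route is softer. First, since $g_0$ is not Einstein, the strong maximum principle makes $\mathcal{R}(g(t))>\frac{m}{n}\mathcal{R}(h(t))$ strict for $t>0$, so after shrinking $h_0$ one may assume $f_0$ is \emph{strictly} area decreasing; Theorem \ref{t-pic}(ii) then keeps the flow graphical and strictly area decreasing up to $T_{\max}<\infty$. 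This also makes your ``exclude the rigid alternative'' paragraph unnecessary --- which is fortunate, because as written it is unsound: the Gauss equation does not transfer the bound $\tau_N\le 0$ to sectional curvatures of $M$ under an isometric immersion (the second fundamental form contributes with a positive sign), and an isometric immersion into an Einstein manifold does not force the domain to be Einstein. Second, by the proof of Lemma \ref{l-long-time-RF} the unrescaled $g(t)$ becomes, near $T_{\max}$, a large multiple of a nearly round (or nearly Fubini--Study) metric while $h(t)$ stays controlled, so for $t_1$ close to $T_{\max}$ the static pair $(g(t_1),h(t_1))$ satisfies ({\bf A}) or ({\bf B}); applying Theorem \ref{t-TTW-2} to $f_{t_1}$ and using that $f_{t_1}\simeq f_0$ finishes. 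Finally, your claim that $\chi_{IC1}(g(t))>0$ for $t>0$ is false in general: the K\"ahler, $\mathrm{U}(m/2)$-holonomy branch of Lemma \ref{l-long-time-RF} is locally irreducible and non-symmetric yet has $\chi_{IC1}\equiv 0$. Conditions ({\bf C})/({\bf D}) only require $\chi^g\ge 0$, which is what the paper actually uses.
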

\begin{proof}
We only prove the case ({\bf E}) while  the case ({\bf F}) is proved similarly.

(i) Suppose $g_0$ is not Einstein. Let us first assume that
$$
\mathcal{R}_{min}(g_0)> \frac{m}{n}\;\mathcal{R}_{max}(h_0).
$$
Then we can shrink $h_0$ to $a^2h_0$  for some $0<a<1$, so that $f_0$ is strictly area decreasing so that the above inequality on scalar curvatures is still true for $g_0, a^2h_0$. Hence without loss of generality, we may assume that $f_0$ is strictly area decreasing.
Let $g(t),t\in [0,T_{max})$ be the maximal solution of the Ricci flow starting from $g_0$. By Lemma \ref{l-long-time-RF}, $T_{max}<\infty$.
 Since $h_0$ is a Einstein metric,  $h(t)=(1-Lt)h_0$ for some $L\ge0$ where $\Ric(h_0)=Lh$. Hence $h(t)$ is defined on $[0, L^{-1}).$ Here $L^{-1}$ is understood to be $+\infty$ if $L=0$. By considering the lower bound of scalar curvature of $g(t)$, the strong maximum principle and the fact that $g_0$ is not Einstein, we have
\begin{equation}\label{e-nonEinstein}
\mathcal{R}(g(t))> \frac{mL}{1-Lt}=\frac mn\mathcal{R}(h(t))
\end{equation}
for $t\in (0,T_{max})$ and in particular $T_{\max}< L^{-1}$.  By Theorem \ref{t-pic}, we can solve the graphical mean curvature flow    $F: M\to (M\times N, g(t)\oplus h(t))$ with $F_0=\mathrm{Id}\times f_0$  which exists on $[0,T_{max})$. On the other hand, by Lemma \ref{l-long-time-RF}, the sectional curvature of $g(t)$ tends to infinity as $t\to T_{max}$ while the sectional curvature of $h(t)$ remains bounded in $[0,T_{max}]$ because $T_{max}<L^{-1}$. This reduces to the situation in Theorem \ref{t-TTW-2}, and hence $f_0$ is homotopically trivial.

If we only assume that $$
\mathcal{R}_{min}(g_0)\ge \frac{m}{n}\;\mathcal{R}_{max}(h_0),
$$
then   \eqref{e-nonEinstein} is still true for $t>0$ by strong maximum principle. Let $F$ be the short time solution of the graphical mean curvature flow as above,  by  Theorem~\ref{t-pic}, $f_t: (M,g(t))\to (N, h(t))$ is still area non-increasing. Moreover $g(t)$ is still in $\chi_{IC1}$ by \cite{BrendleSchoen2008}. Hence $f_t$ is homotopically trivial by the above discussion and hence $f_0$ is also homotopically trivial.

(ii)  Suppose $g_0$ is Einstein. Then the Einstein constant must be positive because $g_0$ is $\chi_{IC1}$ and is locally irreducible. So $g_0$ has positive Ricci curvature and hence its universal cover is compact. By Lemma \ref{l-long-time-RF}, $g_0$ has positive sectional curvature. If $m>n$, then by the assumption on the scalar curvatures of $g_0, h_0$
\bee
\Ric(g_0)_{min}-\Ric(g_0)_{max}+(m-n)\kappa_M>0
\eee
where $\kappa_M$ is the lower bound of the sectional curvature of $g_0$. We can shrink $h_0$ a little so that the above inequality is true and $f_0$ is strictly area decreasing. By Theorem \ref{t-TTW-2}(ii), we conclude that $f_0$ is homotopically trivial.

Since $g_0$ has positive sectional curvature, by Theorem \ref{t-TTW-2}(iii) we conclude that either $f_0$ is homotopically trivial or $f_0$ is a local isometry (if $m=n$) and isometric immersion (if $m<n$).

The proof of (iii) is similar, and we omit the details.

\end{proof}

\end{document}